\documentclass{article}
\usepackage{authblk}
\usepackage[utf8]{inputenc}
\usepackage{url}
\usepackage[version=4]{mhchem}
\usepackage{multirow}
\usepackage{lipsum}
\usepackage{stackrel}
\usepackage{listings}
\usepackage{amssymb}
\usepackage{nccmath}
\usepackage{a4wide}
\usepackage{mathtools}
\usepackage{amsmath,amsfonts,amsthm,amssymb}
\usepackage{bbm, dsfont}
\usepackage[justification=justified, singlelinecheck=false]{caption}
\usepackage{graphicx}
\usepackage{enumitem}
\usepackage[justification=centering]{caption}
\usepackage{enumitem}
\usepackage{empheq,etoolbox}
\patchcmd{\subequations}
  {\theparentequation\alph{equation}}
  {\theparentequation.\alph{equation}}
  {}{}
\usepackage[justification=centering]{caption}
\usepackage{algorithm}
\usepackage{algpseudocode}

\usepackage{stackengine}

\usepackage[title]{appendix}
\usepackage[scaled=.90]{helvet}
\usepackage{courier}
\usepackage{ae}
\usepackage[T1]{fontenc}
\usepackage{graphicx}
\usepackage{subcaption}
\usepackage[export]{adjustbox}
\usepackage{wrapfig}

\usepackage[T1]{fontenc}
\usepackage{here} 
\usepackage[colorlinks=false, pdfborder={0 0 0}]{hyperref}

\usepackage{mdwlist}
\usepackage{tcolorbox}
\usepackage{fancybox}
\usepackage{framed}

\definecolor{darkblue}{rgb}{0,0,0.8}
\definecolor{darkgreen}{rgb}{0,0.8,0}

\definecolor{magenta}{rgb}{0.5,0,0.5}

\newcommand{\mathleft}{\@fleqntrue\@mathmargin0pt}

\newtheorem{theorem}{Theorem}[section]
\newtheorem{definition}{Definition}[section]
\newtheorem{lemma}{Lemma}[section]
\newtheorem{Lemma}{Lemma}
\newtheorem{assumption}{Assumption}[section]

\newtheorem{remark}{Remark}[section]

\newtheorem{notation}{Notation}[section]

\providecommand{\keywords}[1]
{
  \small	
  \textbf{\textit{Keywords---}} #1
}

\newcommand{\FTS}[2]{\frac{{\textstyle #1}}{{\textstyle #2}}}

\usepackage{biblatex}
\addbibresource{Bib.bib} 
\begin{document}
\title{Numerical Approximations and Convergence Analysis of Piecewise Diffusion Markov Processes, with Application to Glioma Cell Migration} 
\author[1]{Evelyn Buckwar\thanks{Email: evelyn.buckwar@jku.at}}
\author[1]{Amira Meddah\thanks{Corresponding author: amira.meddah@jku.at}}
\affil[1]{\centerline{\small Institute of Stochastics, Johannes Kepler University Linz,} \newline \centerline{\small Altenberger Straße 69, 4040 Linz, Austria}}
\date{\today}

\newcommand{\ebbox}[1]{\fbox{$\triangleright$\textcolor{green}{\textbf{Evelyn}:} #1}}
\newcommand{\ebbbox}[1]{{
\fbox{
\parbox{0.9\textwidth}{  \fbox{$\triangleright$\asd{\textbf{Evelyn}:}} 
#1
}}}}

\newcommand{\amebox}[1]{\fbox{$\triangleright$\textcolor{blue}{\textbf{Amira}:} #1}}
\newcommand{\amebbox}[1]{{
\fbox{
\parbox{0.9\textwidth}{  \fbox{$\triangleright$\asd{\textbf{Amira}:}} 
#1
}}}}

\maketitle
\begin{abstract}
In this paper, we focus on numerical approximations of Piecewise Diffusion Markov Processes (PDifMPs), particularly when the explicit flow maps are unavailable. Our approach is based on the thinning method for modelling the jump mechanism and combines the Euler-Maruyama scheme to approximate the underlying flow dynamics. For the proposed approximation schemes, we study both the mean-square and weak convergence. Weak convergence of the algorithms is established by a martingale problem formulation. Moreover, we employ these results to simulate the migration patterns exhibited by moving glioma cells at the microscopic level. Further, we develop and implement a splitting method for this PDifMP model and employ both the Thinned Euler-Maruyama and the splitting scheme in our simulation example, allowing us to compare both methods.
\end{abstract}\vspace{0.05cm}

\keywords{Piecewise Diffusion Markov Processes, Thinning Method, Splitting Method,\\ Low Grade Glioma model} 




\section{Introduction}
\label{section1}
Over the past decades, stochastic hybrid systems (SHS) have emerged as a powerful modelling technique in a wide range of application areas such as mathematical biology \cite{berg1972chemotaxis,cloez2017probabilistic}, neuroscience \cite{pakdaman2010fluid,buckwar2011exact}, biochemistry \cite{singh2010stochastic}, finance \cite{ishijima2011regime}, and power systems \cite{malhame1990jump}, to name a few.\\
A well-known and very powerful class of SHS, characterised by continuous dynamics and stochastic jumps, is the piecewise deterministic Markov process (PDMP), introduced in $1984$ by Davis \cite{davis1984piecewise}. Davis's pioneering work focused on endowing PDMPs with general tools, similar to the already existing one for diffusion processes. Essentially, PDMPs form a family of non-diffusive c\`adl\`ag Markov processes, involving a deterministic motion punctuated by random jumps.\\
\indent A few years later, a very general formal model for stochastic hybrid systems was proposed by Blom in \cite{blom1988piecewise}. This model expanded upon the foundational principles of the PDMPs by substituting the ordinary differential equations (ODEs), which govern continuous dynamics, with their stochastic counterparts. Moreover, the author introduced generalised reset maps that incorporated state-dependent distributions, characterising the probability density of the system state after each transition. This class of newly formulated hybrid systems was named Piecewise Diffusion Markov processes (PDifMPs).\\
Subsequently, in 2003, a follow-up work by Bujorianu et al. \cite{bujorianu2003reachability} rigorously established the theory underlying PDifMPs and defined their extended generators.\\
\indent While PDifMPs are amenable to analytical solutions in certain cases, numerical approximation techniques are often required in practice. One of the main challenges in numerically approximating PDifMPs lies in handling stochastic jumps, which occur at random times and exert a substantial influence on the system dynamics. Furthermore, it is essential to efficiently combine these jumps with the discretisation of the continuous dynamics. Numerous publications have addressed numerical approximations for PDMPs, e.g. \cite{lemaire2018exact, lemaire2020thinning, riedler2013almost, kritzer2019approximation, Bertazzietal2022}. In particular, \cite{Bertazzietal2022} provides a thorough overview of issues arising, when approximating all characteristics of PDMPs with a focus on simulating the underlying distributions. However, there appears to be a gap in the literature regarding the development and analysis of numerical schemes for PDifMPs. Although, simulations of generalised stochastic hybrid systems are presented in \cite{blom2018interacting}, these are limited to specific examples.\\
\indent To address this issue, we propose two distinct approaches that combine different techniques for approximating both the continuous and discrete aspects of the process. Inspired by the work of Lemaire et al. \cite{lemaire2018exact, lemaire2020thinning}, we introduce a first approach for approximating the flow maps of PDifMPs. This approach leverages a combination of the thinning method and the Euler-Maruyama scheme. Additionally, we provide estimates for the mean-square error and introduce a framework for a weak error expansion.\\
Furthermore, we propose a second numerical method that integrates the thinning method with the splitting method to approximate the continuous dynamics of the process between jump times.\\
The fundamental idea behind the splitting method is to decompose the stochastic differential equations (SDEs) governing the continuous component of the process into exactly solvable subequations, derive their solutions and compose them in a suitable way. 
We refer to
\cite{mclachlan2002splitting, casas2008splitting} for a thorough discussion of splitting methods for ODEs and to \cite{buckwar2022splitting, ableidinger2017stochastic, alamo2016technique,shardlow2003splitting, petersen1998general} among many others, for references considering extensions to SDEs. While to the best of our knowledge this approach has not yet been explored in the context of PDifMPs, it has recently been applied to PDMPs by Bertazzi et al. \cite{bertazzi2023piecewise} where the authors introduced novel Markov chain Monte Carlo (MCMC) algorithms that leverage splitting schemes to approximate PDMPs.\\
\indent As an application we consider a PDifMP describing glioma cell migration at a microscopic scale, simplifying the model to one dimension for clarity of exposition. Gliomas, the most prevalent type of intracranial tumour, account for over $70\%$ of all brain cancers. These tumours arise from mutated glial cells, which are a type of cell that provides physical and chemical support to neurons \cite{ohgaki2009epidemiology}. For an extensive discussion on low-grade gliomas and various mathematical models for studying brain tumours we refer to our work \cite{meddah2023stochastic} and the references therein.\\
\indent The paper is organised as follows. In Section \ref{section2}, we provide a concise introduction to PDifMPs. Section \ref{section3} introduces the Thinned Euler-Maruyama (TEM) method for PDifMPs. Sections \ref{section4} and \ref{section5} are dedicated to the numerical analysis, exploring both mean-square and weak convergence. Section \ref{section6} presents an extended model from \cite{meddah2023stochastic}, considering microenvironmental effects on cellular movement, and includes numerical simulations using the TEM method, comparing it with the splitting method. These simulations focus on the impact of parameters like the jump intensity of PDifMPs and the time step size. Section \ref{section7} concludes with a review of our findings and discusses potential future research directions.

\section{Preliminaries}
\label{section2}
In this section, we present the preliminary concepts necessary to understand PDifMPs. For more detailed information, we refer the reader to \cite{bujorianu2006toward, bujorianu2003reachability, bect2007processus}.\\
\indent We consider a complete probability space 
$\left(\Omega, \mathcal{F}, (\mathcal{F})_{t \geq 0}, \mathbb{P}\right)$ with filtration $(\mathcal{F})_{t \geq 0}$ on $\Omega$ satisfying the usual conditions, i.e $(\mathcal{F})_{t \geq 0}$ is right continuous and $\mathcal{F}_0$ contains all $\mathbb{P}$-null sets in $\mathcal{F}$. Let $T$ be a fixed deterministic time, we consider $(W_t)_{t\in [0,T]}\in \mathbb{R}^{m}$, $m\in \mathbb{N}$, to be a standard Wiener process defined on $\left(\Omega, \mathcal{F}, (\mathcal{F})_{t \geq 0}, \mathbb{P}\right)$. We assume that $W_t$ is $\mathcal{F}_t$-adapted.\\
The PDifMP we consider consists of two non-exploding processes $(y_t, v_t)$, with values in $E=E_1\times \mathbf{V}$. Here, $E_1$ and $\mathbf{V}$ are subsets of $\mathbb{R}^{d}$, $d \in \mathbb{N}$, such that $\mathbf{V}$ is finite or a countable set. The Borel $\sigma$-algebra $\mathcal{B}(E)$ is generated by the topology of $E$.\\
\indent The first stochastic component $(y_t)_{t\in [0,T]}$ possesses continuous paths in $E_1$, while the second component $(v_t)_{t\in [0,T]}$ is a jump process with right-continuous paths and piecewise constant values in $\mathbf{V}$. Additionally, we denote by $(T_n)_{n\in \mathbb{N}}$ the times at which the second component jumps. These times form a sequence of randomly distributed grid points in the interval $[0,T]$.\\
\indent The dynamics of the PDifMP $(x_t)=(y_t,v_t)$, $t\in [0,T]$, on $(E,\mathcal{B}(E))$ are defined by its characteristic triple $(\phi, \lambda, \mathcal{Q})$.
The parameter $\phi: [0,T] \times E \rightarrow E_1$, given by $(t,x_t)\mapsto \phi (t,x_t)$, represents the stochastic flow map of the continuous component $(y_t)_{t\in [0,T]}$. It encapsulates the solution to a sequence of SDEs over consecutive intervals $[T_n, T_{n+1})$ with random lengths. At each random point $T_n\in [0,T]$, $n\geq 1$, there are updated initial values $x_n=(y_n,v_n)\in E$. Here, $y_n$ serves as the initial value, and $v_n$ acts as a parameter in SDE \eqref{sys_1}, stated below, defined on the interval $[T_n, T_{n+1})$.\\
For $p\geq 1$, let $\mathcal{L}_{\text{ad}}\left(\Omega, L^{p}[T_n, T_{n+1}]\right)$ denote the class of random processes that are $\mathcal{F}_t$-adapted and exhibit almost surely $p$-integrable sample paths. In the context of Equation \eqref{sys_1}, we define the drift vector $b(\cdot, v_n)$ mapping $E$ into $\mathbb{R}^{d}$ as an element of $\mathcal{L}_{\text{ad}}\left(\Omega, L^{1}[T_n, T_{n+1}]\right)$. Further, the diffusion matrix $\sigma(\cdot, v_n)$, mapping $E$ into $\mathbb{R}^{{d}\times m}$ is a class $\mathcal{L}_{ad}\left(\Omega, L^{2}[T_n, T_{n+1}]\right)$ function. Let 

	\begin{equation}
		\label{sys_1}
		\left\{
		\begin{array}{ll}
			d\phi(t,x_n)=b(\phi(t,x_n),v_n)dt+\sigma(\phi(t,x_n),v_n)dW_t, \qquad t \in [T_n, T_{n+1}),\\[0.2cm]
			\phi(T_n,x_n)=y_n.
		\end{array}
		\right. 
	\end{equation}
	At the end point $T_{n+1}$ of each interval, $y_{n+1}$ is set to the current value of $\phi(\,\cdot\,, x_n)$ to ensure the continuity of the path. Further, a new value $v_{n+1}$ is chosen as fixed parameter for the next interval according to the jump mechanism described below. 
	Equation (\ref{sys_1}) can be formulated equivalently as
	
	\begin{equation}
	    \label{integral_form}
	    \phi(t,x_n)= y_n+ \int_{T_n}^{t}b(\phi(s,x_n),v_n)ds+\int_{T_n}^{t}\sigma(\phi(s,x_n),v_n)dW_{s}, \qquad t \in [T_n, T_{n+1}).
	\end{equation}
	For ease of understanding, we adopt the notation $x_t=(y_t,v_t)$ to represent the sample paths of the PDifMP, with $x=(y,v)$ denoting the initial conditions following each jump. Henceforth, unless otherwise specified, $x=(y,v)$ consistently indicates the starting point for each interval, irrespective of the specific time point under consideration.\\
 The following theorem establishes the existence of a unique solution for Equation (\ref{integral_form}) in $E$.
\begin{theorem}[Mao \cite{mao2007stochastic}]
	\label{existence}
	Assume that there exist two positive constants $K_1$ and $K_2$, such that 
	\begin{enumerate}
	    \item (Lipschitz condition) for all $\varphi, \Tilde{\varphi} \in E_1$ and $v\in \mathbf{V}$
	    \begin{equation}
	        \label{lipschitz_cond}
	        \lvert b(\varphi,v)-b(\Tilde{\varphi},v) \rvert ^2\bigvee \lvert \sigma(\varphi,v)-\sigma(\Tilde{\varphi},v)\rvert ^2
\leq     K_1 \lvert \varphi-\Tilde{\varphi} \rvert ^2,
	    \end{equation}
	    \item (Linear growth condition) for all $(\varphi,v)\in E$
	    \begin{equation}
	    \label{linear_growth}
	        \lvert b(\varphi,v) \rvert ^2 \bigvee \lvert \sigma(\varphi,v) \rvert ^2 \leq K_2 (1+\vert\varphi\vert^2).
	    \end{equation}
	    \end{enumerate}
	    Then, for any $v \in \mathbf{V}$, there exists a unique solution to Equation (\ref{integral_form}) in $E$.
	\end{theorem}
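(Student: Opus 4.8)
The plan is to run the classical Picard--Lindelöf iteration in the stochastic setting, exactly as in Mao's monograph. Fix $v \in \mathbf{V}$ and work on a single interval, say $[T_n, T_{n+1}]$; conditioning on $\mathcal{F}_{T_n}$ we may treat $y_n$ as a fixed, square-integrable $\mathcal{F}_{T_n}$-measurable initial condition, so the randomness of the endpoints is immaterial and the problem reduces to a standard SDE on a deterministic interval of length $h = T_{n+1} - T_n$.

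First I would introduce the Picard iterates $\phi^{(0)}(t) \equiv y_n$ and, for $k \geq 0$,
\[
\phi^{(k+1)}(t) = y_n + \int_{T_n}^{t} b\bigl(\phi^{(k)}(s), v\bigr)\, ds + \int_{T_n}^{t} \sigma\bigl(\phi^{(k)}(s), v\bigr)\, dW_s, \qquad t \in [T_n, T_{n+1}],
\]
and check inductively that each $\phi^{(k)}$ is well defined, $\mathcal{F}_t$-adapted, and has a.s.\ continuous paths. Using the linear growth condition \eqref{linear_growth}, the Cauchy--Schwarz inequality for the drift term, Doob's maximal inequality together with Itô's isometry for the stochastic integral, and Grönwall's lemma, I would derive an a priori bound $\mathbb{E}\bigl[\sup_{t \in [T_n, T_{n+1}]} |\phi^{(k)}(t)|^2\bigr] \leq C$ with $C$ depending only on $\mathbb{E}|y_n|^2$, $K_2$, $h$ and $T$, but crucially not on $k$.

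Next I would show that $(\phi^{(k)})_k$ is Cauchy in $L^2\bigl(\Omega; C([T_n, T_{n+1}])\bigr)$. Subtracting consecutive Picard equations and applying the same tools, now invoking the Lipschitz condition \eqref{lipschitz_cond}, gives the recursive estimate
\[
\mathbb{E}\Bigl[\sup_{s \in [T_n, t]} |\phi^{(k+1)}(s) - \phi^{(k)}(s)|^2\Bigr] \leq L \int_{T_n}^{t} \mathbb{E}\Bigl[\sup_{r \in [T_n, s]} |\phi^{(k)}(r) - \phi^{(k-1)}(r)|^2\Bigr]\, ds
\]
with $L = L(K_1, h)$; iterating it $k$ times yields the factorial decay $\mathbb{E}[\sup |\phi^{(k+1)} - \phi^{(k)}|^2] \leq M\,(Lh)^k/k!$, which is summable after taking square roots. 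Hence the iterates converge, uniformly on $[T_n, T_{n+1}]$ in mean square, to a limit process $\phi$ that is adapted, has continuous sample paths, and --- since $y_n \in E_1$ and the construction keeps the continuous component in $E_1$ --- gives $x_t = (\phi(t, x_n), v) \in E$. Passing to the limit in the Picard recursion, again via the Lipschitz bounds on the integrands, shows that $\phi$ solves \eqref{integral_form}.

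For uniqueness, given two solutions $\phi, \tilde\phi$ with the same initial value, their difference satisfies exactly the estimate above (with $\phi^{(k+1)}, \phi^{(k)}$ replaced by $\phi, \tilde\phi$ on both sides), so Grönwall's inequality forces $\mathbb{E}[\sup_{s} |\phi(s) - \tilde\phi(s)|^2] = 0$, i.e.\ the two solutions are indistinguishable. The argument contains no single hard step --- each estimate is routine --- but the two points that require care are making the a priori bound in the first step uniform in the iteration index $k$, and checking that the conditioning on $\mathcal{F}_{T_n}$ genuinely reduces the PDifMP's random-interval problem to the deterministic-interval statement of Mao; once $v = v_n$ is frozen, this reduction is immediate and the theorem follows.
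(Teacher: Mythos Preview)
Your proposal is correct and follows the classical Picard iteration argument from Mao's monograph. Note that the paper does not supply its own proof of this theorem: it is simply quoted from \cite{mao2007stochastic}, so there is nothing further to compare against --- your outline is precisely the argument one finds there.
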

\noindent The stochastic flow $\phi$ possesses the semi-group property
	\begin{equation}
     \label{semi_gr_pr}
		\phi\big((t+s),(\,\cdot\,,v)\big)=\phi\big(t,\, (\phi(s,(\,\cdot\,,v)), v)\big)\,,\quad \forall \, t, \,s \in [0,T], \quad \text{a.s}.
	\end{equation}
\noindent The jump mechanism of the PDifMP is governed by the remaining components of its characteristic triple, namely $\lambda$ and $\mathcal{Q}$. Specifically, the function ${\lambda: E \rightarrow \mathbb{R}_{+}}$ quantifies the rate at which jumps take place in the second component of the process $(x_t)_{t\in [0,T]}$, which in turn determines the sequence of random times $(T_n)_{n>0}$.
\begin{assumption}
	\label{assump_lambda}
Let	$\lambda:E\rightarrow \mathbb{R}_{+}$ be a measurable function. For all $\omega\in \Omega$, $\varphi\in E_1$, $v\in \mathbf{V}$, and $T>0$, we assume that
	\begin{equation}
		\int_0^T \lambda(\varphi,v) dt < \infty \hspace{1cm} \text{and} \hspace{1cm} \int_0^{\infty} \lambda(\varphi,v)  dt =\infty. 
	\end{equation}
\end{assumption}
\noindent The first condition in Assumption \ref{assump_lambda} ensures that the total jump rate over any finite time interval is bounded, preventing excessively frequent jumps. The second condition ensures that the total jump rate over infinite time intervals is unbounded, allowing for a nontrivial occurrence of jumps over time.\\
\noindent Additionally, the transition kernel $\mathcal{Q}: (E,\mathcal{B}(E))\rightarrow [0,1]$ determines the new value of the second component following a jump event. It defines the probability distribution that governs the transition from the pre-jump state to the post-jump state.
\begin{assumption}
For all $x \in \overline{E}$, where $\overline{E}$ denotes the closure of the set $E$, the function $\mathcal{Q}(x,\,\cdot\,)$ is a probability measure. Moreover, for all $A \in \mathcal{B}(E)$, $\mathcal{Q}(\,\cdot\,,A)$ is measurable. Further,
\begin{itemize}
    \item[a.] For any $x \in E$, the condition $\mathcal{Q}(x, \{x\}) = 0$ holds, effectively preventing the process from having a no-move jump, thereby ensuring that each jump results in a state change.
    \item[b.] At any jump time $T_n$ of $v_t$, where $v_{T_n^-} \neq v_{T_n}$ and $T_n^-$ denotes the time just before $T_n$, the continuity of $y_t$ is preserved in the sense that $y_{T_n} = y_{T_{n}^{-}}$.
\end{itemize}
\label{Trans_Assump}
\end{assumption}

\begin{definition}
 For any $t\in [T_n, T]$ with $n\geq 0$, and $\phi (t,x)$ the solution of Equation \eqref{sys_1}, the \textit{survival function of the inter-jump times} $\mathcal{S}(t,x)$ is defined as follows
\begin{equation}
 \label{gen_surv}
 \mathcal{S}(t,x):= \exp \left(-\int_{T_n}^t \lambda(\phi(s,x),v)ds \right), \qquad x\in E.
 \end{equation}
\end{definition}
\noindent The function $\mathcal{S}(t,x)$ represents the probability of no jump occurring in the time interval $[T_n,t)$ given that the process is in the initial state $x$.\\
 Further, we introduce the uniformly distributed random variable $\mathcal{U}$ on $[0,1]$ and define the generalised inverse of $\mathcal{S}(t,x)$ as $\zeta:[0,1]\times E \rightarrow \mathbb{R}_{+}$, given by
\begin{equation}
\label{inverse}
	\zeta(\mathcal{U},x)=\inf\{t\in[0, T] \, ;\, \mathcal{S}(t,x)\leq \mathcal{U}\}.
\end{equation}
Here, $\zeta(\mathcal{U},x)$ serves as a threshold time that determines the occurrence of jumps in the process and their associated probabilities.

\begin{definition}
\label{generalised_inverse_Q}
Let $\psi: [0,1] \times E \rightarrow E$ be a measurable function. We say that $\psi$ is the generalised inverse function of $\mathcal{Q}$ if, for any initial state $x \in E$ and any measurable set $A \in \mathcal{B}(E)$, the following holds
\begin{equation}
\label{inverse_function}
\mathbb{P}(\psi(\mathcal{U},x) \in A) = \mathcal{Q}(x, A).
\end{equation}
\end{definition}
For each $\omega \in \Omega$, the random variable $\psi(\mathcal{U}(\omega),x(\omega))$ describes the post-jump locations of the second component of $x$.\\
For ease of exposition we only discuss the case $d=1$ throughout the rest of the article, the results also hold for $d>1$.

\section{Thinned Euler-Maruyama Method for PDifMP}
\label{section3}

In this section, we build upon the work in \cite{lemaire2020thinning} by adapting the PDMP simulation techniques for PDifMPs. To do that, we propose a simulation technique that we call the thinned Euler-Maruyama method (TEM). This method offers a twofold approach to approximating PDifMPs. Firstly, it addresses scenarios where the flow maps within PDifMPs may not have explicit analytical solutions. Here, we employ the Euler-Maruyama method to approximate the flow maps between jump events. Secondly, we leverage the thinning method to approximate the jump times within the PDifMP. In \cite{davis1984piecewise, davis2018markov}, Davis proposes the use of the generalised inverse, $\zeta$, defined by Equation \eqref{inverse}, to simulate the inter-jump times. The precise computation of $\zeta$ often poses significant challenges because it requires the explicit inversion of the survival function $\mathcal{S}$. While this is straightforward if $\mathcal{S}$ is analytically invertible, such conditions are rarely met in practical scenarios. Consequently, an alternative method is employed to approximate inter-jump times.\\
The thinning method, initially developed for Non-Homogeneous Poisson Processes (NHPPs) \cite{lewis1979simulation}, and later adapted to the simulation of the jump rate of PDMPs, is akin to an acceptance-rejection algorithm, \cite{bierkens2018piecewise,lemaire2018exact, lemaire2020thinning}. It effectively simulates PDifMP jump times by generating potential transition times from a homogeneous Poisson process with rate $\lambda^*$, $\lambda^{*}>0$.
 These times are accepted with a probability determined by the ratio $\lambda(\,\cdot\,)/\lambda^{*}$.\\
 Note that our approach assumes a globally bounded jump rate $\lambda^*$ as an upper bound. However, the use of Poisson thinning can be extended to varying bounds $\lambda^{*}(t)$ along trajectories. This extension is feasible as long as it remains possible to simulate from a NHPP with rate $\lambda^{*}(t)$. We refer to \cite{bouchard2018bouncy, bierkens2019zig, lemaire2018exact} and the references therein for more details about Poisson thinning.

\subsection{Simulation of PDifMPs and thinning}
\label{Thinning_Sec}
Let $T>0$ be a fixed time horizon. Let $(N^{*}_t)_{t\in [0,T]}$ be a homogeneous Poisson process on the probability space $(\Omega, \mathcal{F}, \mathbb{P})$ with intensity $\lambda^{*}$. We assume that $\lambda^{*}$ satisfies the following assumption :
\begin{assumption}
\label{assumptionestimate}
There exists a finite positive constant $\lambda^{*}$ such that, for all $x\in E$, ${\lambda(x)\leq \lambda^{*}}$.
\end{assumption}
\noindent Let $(T^{*}_k)_{k\geq 1}$ denote the successive jump times of $(N^{*}_t)_{t\in [0,T]}$ with $T^{*}_0=0$. Additionally, we define two i.i.d sequences of random variables; $(\mathcal{U}_k)_{k \geq 1}$ and $(\mathcal{V}_k)_{k \geq 1}$, both uniformly distributed on the interval $[0,1]$. These sequences are independent of each other and independent of $(T^{*}_k)_{k\geq 1}$. Note that all these random variables are independent of the Wiener process $(W_t)_{t \in [0,T]}$ used in the model.\\
Let $(x_t)_{t\in [0,T]}$ be a PDifMP satisfying Assumption \ref{Trans_Assump}. Using these sequences, we iteratively construct a sequence of jump times $(T_n)_{n\geq 0}$ and post-jump locations $(y_n,v_n)_{n\geq 0}$ of $(x_t)_{t\in [0,T]}$ as follows.\\
Starting at $T_0$ with a fixed initial state $(y_0,v_0) \in E$, we determine the subsequent jump time $T_1$ of the process $(x_t)_{t\in [0,T]}$ using the thinning technique applied to the sequence $(T^{*}_k)_{k\geq 1}$:
\begin{equation}
\label{first_jump_time}
T_1:= T^{*}_{\tau_1},
\end{equation}
where $\tau_1$ is given by
\begin{equation}
\label{first_waiting_time}
\tau_1:= \inf \left\{ k>0 : \mathcal{U}_k \lambda^{*}\leq \lambda \big(\phi(T^{*}_k,x_0),v_0\big) \right\}.
\end{equation}
Since $\mathbf{V}$ is an at most a countable set, we can write it as $\mathbf{V}=\{k_1, \ldots, k_{\vert \mathbf{V} \vert}\}$, where $\vert\mathbf{V}\vert$ is the cardinal number of the set $\mathbf{V}$. This implies that we can provide a more specific form of the generalised inverse function of $\mathcal{Q}$ given in Definition \ref{generalised_inverse_Q}. To this end, we introduce the following definition.
\begin{definition}
   For $j \in \{1, \ldots, \vert\mathbf{V}\vert\}$, let $a_j$ define the cumulative distribution function of $\mathcal{Q}$ such that, for any $x=(y,v)\in E$, and $\Tilde{y}\in E_1$, satisfying Assumption \ref{Trans_Assump} (b), we have $a_0(x)=0$ and
\begin{equation}\label{eq:4}
a_j(x) = a_j(y,v) := \sum_{i=1}^j \mathcal{Q}\big((y,v),(\Tilde{y} ,k_i)\big).
\end{equation}     
\end{definition}
\noindent Then, for any generic $x\in E$, the generalised inverse $\psi(\,\cdot\,,x)$ of $\mathcal{Q}(x,\,\cdot\,)$, as described in \eqref{inverse_function}, is discretised. Specifically, we define $\psi$ in terms of discrete intervals using the cumulative distribution function $a_j$, which partitions the probability space into segments corresponding to specific outcomes as follows
\begin{equation}\label{eq:5}
    \psi(u,x):= \sum_{i=1}^{|\mathbf{V}|} k_i \mathbbm{1}_{a_{i-1}(x)<u\leq a_i(x)}, \quad \forall \, x \in E, \, \forall \, u \in [0,1].
\end{equation}

\noindent Now, we can construct $(y_1,v_1)$ from the uniform random variable $\mathcal{V}_1$ and the function $\psi$ defined in \eqref{eq:5} as
\begin{align*}
    (y_1,v_1)=& \big(\phi(T^*_{\tau_1},x_0), \psi(\mathcal{V}_1,(\phi(T^*_{\tau_1},x_0),v_0)) \big),\\
    =& \big(\phi(T_{1},x_0), \psi(\mathcal{V}_1,(\phi(T_{1},x_0),v_0)) \big).
\end{align*}

\noindent Conditioning on  $(\tau_1, (T^*_k)_{k\leq \tau_1}, (W_t)_{t \leq T^*_{\tau_1}})$, the distribution of $(y_1, v_1)$ is determined by the transition kernel $\mathcal{Q}$ evaluated at the state $(\phi(T^*_{\tau_1},x_0),v_0)$. In other words, the conditional distribution of $(y_1, v_1)$ is given by
\begin{equation*}
    \sum_{k\in \mathbf{V}} \mathcal{Q}\big((\phi(T^*_{\tau_1},x_0),v_0),(y_1, k)\big).
\end{equation*}

\noindent Let $(\tau_{n-1}, (T^{*}_k)_{k\leq \tau_{n-1}}, (y_{n-1},v_{n-1}))$, $n>1$, be the information obtained up to time $T_{n-1}$. To construct $T_n$, we perform thinning on the point process $(T^{*}_k)$ based on the process being in the state $(y_{n-1},v_{n-1})$ and time $\tau_{n-1}$ for $k\leq \tau_{n-1}$. This is done by removing points $T^{*}_k$ for which the corresponding random variable $\mathcal{U}_k$ is greater than $\lambda\big(\phi(T^{*}_k-T^*_{\tau_{n-1}},x_{n-1}),v_{n-1}\big)/\lambda^{*}$. The resulting thinned point process is denoted by 
\begin{equation}
\label{random_index}
    T_n:= T^*_{\tau_n},
\end{equation}
where 
\begin{equation}
\label{tau_def}
    \tau_n:= \inf \{ k>\tau_{n-1} \,:\, \mathcal{U}_k \lambda^{*}\leq \lambda \big(\phi(T^{*}_k-T^{*}_{\tau_{n-1}},x_{n-1}),v_{n-1}\big) \}.
\end{equation}
\noindent This process repeats for each jump event $n$, and the resulting PDifMP $(y_n,v_n)$ evolves in accordance with the defined jump times and post-jump states such that
\begin{align*}
    (y_{n},v_n)= &\big(\phi(T^*_{\tau_n}-T^*_{\tau_{n-1}},x_{n-1}), \psi(\mathcal{V}_n,(\phi(T^*_{\tau_n}-T^*_{\tau_{n-1}},x_{n-1}),v_{n-1})) \big),\\
    = &\big(\phi(T_{\tau_n}-T_{\tau_{n-1}},x_{n-1}),  \psi(\mathcal{V}_n,(\phi(T_{\tau_n}-T_{\tau_{n-1}},x_{n-1}),v_{n-1})) \big).
\end{align*}

\noindent Next, we define the PDifMP $(x_t)_{t\in[0,T]}$ at time $T_n$ from the sequence $(y_n,v_n)$ by
\begin{equation}\label{margi_x_t}
    x_t:= \big(\phi((t-T_n),x_n),v_n\big), \quad t\in [T_n,T_{n+1}).
\end{equation}
Therefore, for $n\geq 1$, $x_{T_n}=(y_n,v_n)$ and the pre-jump state $x^-_{T_n}=(y_n,v_{n-1})$.\\
Furthermore, we define the counting process $N_t$ associated with the jump times $T_n$ as 
\begin{equation}
\label{point_process}
N_t:=\sum_{n\geq 1} \mathbbm{1}_{T_n\leq t}.
\end{equation}
Note that Assumption \ref{assumptionestimate} ensures the boundedness of the expected number of jumps $N_t$.
\indent In Algorithm \ref{Alg_simulation_PDifMP} (see \ref{App_C}) we have detailed the steps necessary to simulate PDifMPs employing the thinning method coupled with exact solutions for the continuous component updates between jumps.
 \subsection{Approximation of the flow maps}
\label{Euler_Sec}
In practical applications, it is often difficult to obtain explicit expressions for the characteristics of PDifMPs when certain conditions are not met. For an extensive review and detailed discussion on various simulation methods applicable to these characteristics, we refer the reader to \cite{Bertazzietal2022}.\\
In this article, we propose the Euler-Maruyama scheme as our primary method for approximating the function $\phi$ for general applications. For comparative purposes, we also introduce a splitting method applied to a specific model, as discussed in Section \ref{section6}. We combine both methods with the thinning technique for the jump times and analyse this combination with the Euler-Maruyama method (see Sections \ref{section4}-\ref{section5}).\\
We denote by $\overline{\phi}$ the numerical scheme approximating $\phi$. Here, we consider schemes satisfying the following:
\begin{lemma}
\label{phi_lem}
Assume that the assumptions of Theorem \ref{existence} are verified.
Then, for all $(y_1,v) \in E$ and $(y_2,v) \in E$, there exists a positive constant $C_2$ independent of $v$ and $h$ such that
\begin{equation}\label{estimator_phi-phi}
    \mathbb{E}  [\sup_{t\in [0,T]}|\phi(t,(y_1,v))-\phi(t,(y_2,v))|^2] \leq e^{C_1T} |y_1-y_2|^2,
\end{equation}
where $y_1$ and $y_2$ refer to the initial conditions, and they are updated after each jump.
\end{lemma}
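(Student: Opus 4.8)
The plan is to derive estimate \eqref{estimator_phi-phi} directly from the integral representation \eqref{integral_form} of the flow, using only the Lipschitz condition \eqref{lipschitz_cond} together with standard martingale tools, namely the Burkholder--Davis--Gundy (or Doob's $L^2$) inequality and Gronwall's lemma. Since the parameter $v\in\mathbf{V}$ is held fixed on the interval under consideration, the two flows $\phi(t,(y_1,v))$ and $\phi(t,(y_2,v))$ solve the same SDE with the same Wiener path but different initial data $y_1$ and $y_2$; so the difference process $\Delta_t := \phi(t,(y_1,v))-\phi(t,(y_2,v))$ satisfies
\begin{equation*}
\Delta_t = (y_1-y_2) + \int_0^t \big(b(\phi(s,(y_1,v)),v)-b(\phi(s,(y_2,v)),v)\big)\,ds + \int_0^t \big(\sigma(\phi(s,(y_1,v)),v)-\sigma(\phi(s,(y_2,v)),v)\big)\,dW_s.
\end{equation*}

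The key steps, in order, are as follows. First I would fix $t\in[0,T]$, take the supremum over $[0,t]$ inside the norm, and apply the elementary inequality $|a+b+c|^2 \le 3(|a|^2+|b|^2+|c|^2)$ to split the right-hand side into an initial-data term, a drift term, and a diffusion term. Second, for the drift term I would use the Cauchy--Schwarz inequality in the time integral (bounding $\big(\int_0^t |\cdot|\,ds\big)^2 \le T\int_0^t|\cdot|^2\,ds$) followed by the Lipschitz bound \eqref{lipschitz_cond} on $b$. Third, for the diffusion term I would apply Doob's maximal inequality to the martingale $\int_0^\cdot(\sigma(\cdot)-\sigma(\cdot))\,dW_s$, then the Itô isometry, then again the Lipschitz bound \eqref{lipschitz_cond} on $\sigma$; this produces a constant involving only $K_1$ and $T$. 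Fourth, I would collect terms to obtain an inequality of the form
\begin{equation*}
\mathbb{E}\Big[\sup_{r\in[0,t]}|\Delta_r|^2\Big] \le 3|y_1-y_2|^2 + C_1\int_0^t \mathbb{E}\Big[\sup_{r\in[0,s]}|\Delta_r|^2\Big]\,ds,
\end{equation*}
where $C_1$ depends on $K_1$ and $T$ but not on $v$ (because the Lipschitz constant $K_1$ in \eqref{lipschitz_cond} is uniform in $v$) nor on $h$ (which does not even appear, since this is a statement about the exact flow). Finally, Gronwall's inequality yields $\mathbb{E}[\sup_{t\in[0,T]}|\Delta_t|^2] \le 3\,e^{C_1 T}|y_1-y_2|^2$; absorbing the factor $3$ into the exponential constant (or simply noting one may enlarge $C_1$) gives \eqref{estimator_phi-phi}. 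The constant $C_2$ mentioned in the statement plays no role in the inequality itself and can be taken to be any bound one wishes on $e^{C_1T}$ if needed downstream.

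The only genuine subtlety — and the step I expect to need the most care — is making the supremum-inside-the-expectation argument rigorous: one must first establish an a priori bound $\mathbb{E}[\sup_{t\in[0,T]}|\phi(t,(y_i,v))|^2]<\infty$ (which follows from the linear growth condition \eqref{linear_growth} and the existence result of Theorem \ref{existence} by the same BDG--Gronwall scheme) so that the left-hand side $\mathbb{E}[\sup_{r\in[0,t]}|\Delta_r|^2]$ is finite and the Gronwall argument is not vacuous, and so that all the integrals and isometries are legitimate. A standard way to sidestep even this is a localization/stopping-time argument: run the whole estimate up to $\tau_M = \inf\{t: |\Delta_t|\ge M\}$, obtain the Gronwall bound uniformly in $M$, and let $M\to\infty$ by monotone convergence. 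Everything else is routine once the martingale inequality and Gronwall's lemma are in hand; there is no interaction with the jump mechanism here since $v$ is frozen and the initial conditions $y_1,y_2$ are merely treated as fixed (they are the post-jump values, updated externally, as the statement notes).
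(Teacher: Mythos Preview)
Your proposal is correct and follows essentially the same route as the paper's proof in Appendix~\ref{App_A}: write the difference via the integral form, split into initial-data, drift, and diffusion contributions, apply the Lipschitz condition \eqref{lipschitz_cond}, and close with Gronwall. If anything, your version is slightly more careful, since you invoke Doob/BDG to control the supremum of the stochastic integral and flag the need for an a~priori finiteness bound before applying Gronwall, whereas the paper's proof applies the It\^o isometry somewhat informally at the pathwise level before taking expectations.
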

\begin{proof}
We refer to \ref{App_A} for the proof of Lemma \ref{phi_lem}.
\end{proof}
\begin{theorem}{(Yuan et al. \cite{yuan2004convergence})}
\label{eul_est}
Assuming the global Lipschitz conditions \eqref{lipschitz_cond} in Theorem \ref{existence}, we have
\begin{equation}
    \mathbb{E}[\sup_{t\in [0,T]}|\phi(t,x)-\overline{\phi}(t,x)|^2] \leq C_2h , \quad  \forall x\in E,
\end{equation}
where $C_2$ is a positive constant independent of $v$ and $h$.
\end{theorem}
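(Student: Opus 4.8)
The plan is to run the classical strong-convergence argument for the Euler--Maruyama scheme, treating $v$ as a frozen parameter on the interval $[0,T]$ and carefully tracking that every constant depends only on $T$, $K_1$, $K_2$ and the initial data, and not on $v\in\mathbf{V}$ or on the step size $h$. First I would introduce the continuous-time interpolant of the scheme: writing $\bar s := t_k$ for $s\in[t_k,t_{k+1})$, the Euler--Maruyama iterates extend to a process satisfying
\begin{equation*}
\overline{\phi}(t,x) = y + \int_0^t b(\overline{\phi}(\bar s,x),v)\,ds + \int_0^t \sigma(\overline{\phi}(\bar s,x),v)\,dW_s ,
\end{equation*}
which agrees with the discrete scheme at the grid points and is the object one actually estimates.

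Step two is an a priori moment bound, $\sup_{t\in[0,T]} \mathbb{E}\,|\overline{\phi}(t,x)|^2 \le C(1+|y|^2)$ with $C$ independent of $h$ and $v$. This follows from the linear growth condition \eqref{linear_growth}, the It\^o isometry and a discrete/continuous Gronwall argument, exactly as for $\phi$ itself under the hypotheses of Theorem \ref{existence}. A consequence is the one-step closeness estimate $\mathbb{E}\,|\overline{\phi}(s,x)-\overline{\phi}(\bar s,x)|^2 \le C h$ for all $s\in[0,T]$, obtained by bounding $\mathbb{E}\,|\int_{\bar s}^{s} b\,dr|^2 \le h\,\mathbb{E}\int_{\bar s}^s |b|^2\,dr$ and $\mathbb{E}\,|\int_{\bar s}^s \sigma\,dW_r|^2 = \mathbb{E}\int_{\bar s}^s |\sigma|^2\,dr$ and invoking linear growth together with the moment bound.

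Step three is the main estimate. Set $e(t) := \phi(t,x)-\overline{\phi}(t,x)$, subtract the integral equation \eqref{integral_form} from the display above, apply $|a+b|^2\le 2|a|^2+2|b|^2$, take the supremum over $[0,t]$, and then expectations. The drift part is controlled by Cauchy--Schwarz, $\mathbb{E}\sup_{r\le t}|\int_0^r (b(\phi(s,x),v)-b(\overline{\phi}(\bar s,x),v))\,ds|^2 \le T\,\mathbb{E}\int_0^t |b(\phi(s,x),v)-b(\overline{\phi}(\bar s,x),v)|^2\,ds$, and the diffusion part by Doob's $L^2$ (Burkholder--Davis--Gundy) inequality, which replaces the supremum of the stochastic integral by $4\,\mathbb{E}\int_0^t |\sigma(\phi(s,x),v)-\sigma(\overline{\phi}(\bar s,x),v)|^2\,ds$. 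Splitting $b(\phi(s,x),v)-b(\overline{\phi}(\bar s,x),v) = [b(\phi(s,x),v)-b(\overline{\phi}(s,x),v)] + [b(\overline{\phi}(s,x),v)-b(\overline{\phi}(\bar s,x),v)]$, and likewise for $\sigma$, the Lipschitz condition \eqref{lipschitz_cond} bounds the first bracket by $K_1|e(s)|^2$ and the second by $K_1|\overline{\phi}(s,x)-\overline{\phi}(\bar s,x)|^2$, whose expectation is $\le Ch$ by Step two. Collecting terms yields
\begin{equation*}
\mathbb{E}\sup_{s\in[0,t]}|e(s)|^2 \le C\int_0^t \mathbb{E}\sup_{r\in[0,s]}|e(r)|^2\,ds + C\,h ,
\end{equation*}
and Gronwall's inequality gives $\mathbb{E}\sup_{t\in[0,T]}|e(t)|^2 \le C\,h\,e^{CT} =: C_2 h$, with $C_2$ independent of $v$ and $h$.

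The individual steps are routine; the points needing care are the bookkeeping that all constants are independent of $v$ — which is precisely where the uniformity of $K_1,K_2$ in $v\in\mathbf{V}$ in Theorem \ref{existence} is used — and handling the supremum via Doob/BDG \emph{before} invoking Gronwall, so that the $\sup$ appearing on the left-hand side is genuinely the quantity being estimated. A secondary technical prerequisite is the uniform-in-$h$ moment bound for the interpolated scheme; over the finite horizon $[0,T]$ with globally Lipschitz (hence linearly growing) coefficients this is standard, but it must be established first, since both the one-step estimate and the closing Gronwall argument rely on it.
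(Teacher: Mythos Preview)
Your proposal is correct and follows the classical strong-convergence proof for the Euler--Maruyama scheme under global Lipschitz conditions. Note, however, that the paper does not supply its own proof of this statement: the theorem is quoted directly from Yuan et al.~\cite{yuan2004convergence} and no argument is given in the text, so there is nothing to compare against beyond observing that your outline is the standard one underlying that reference.
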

\noindent Following the idea in \cite{lemaire2020thinning}, we introduce the following lemmas, to provide a quantitative estimate for the difference between the exact solution $\phi$ and the numerical solution $\overline{\phi}$ in terms of the initial conditions and the time step size.
\begin{lemma}\label{lemma-combined}
Assume that the conditions of Theorem \ref{existence} hold. Let $\phi(t,x) = \phi(t,(y_1,v))$ and $\overline{\phi}(t,x) = \overline{\phi}(t,(y_2,v))$ be the exact and approximating solutions, respectively, of the system \eqref{Sde_e} with deterministic initial conditions $(y_1,y_2) \in E_1^2$.
\begin{enumerate}
\item For all fixed $\omega \in \Omega$, there exist two positive constants $C_1$ and $C_2$ independent of $v$ and $h$ such that the following holds
\begin{equation}\label{estimator_phi}
\sup_{t\in [0,T]}|\phi(t,x)-\overline{\phi}(t,x)|\leq e^{C_1T} |y_1-y_2| + C_2h , \quad \forall v \in \mathbf{V}, \, \, \forall \, (y_1,y_2)\in E_1^2.
\end{equation}
\item For all fixed $t \in [0,T]$, there exist two positive constants $C_3$ and $C_4$ independent of $v$ and $h$ such that the following holds
\begin{equation}\label{estimator_phi_random}
\mathbb{E} [\sup_{t\in [0,T]}|\phi(t,x)-\overline{\phi}(t,x)|^2] \leq e^{C_3T} |y_1-y_2|^2 + C_4h , \quad \forall \, v\in \mathbf{V}, \, \, \forall \, (y_1,y_2)\in E_1^2.
\end{equation}
\end{enumerate}
\end{lemma}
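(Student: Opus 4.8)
The plan is to combine Lemma~\ref{phi_lem} (stability of the exact flow with respect to initial conditions) with Theorem~\ref{eul_est} (the Euler--Maruyama strong error bound) via a triangle-inequality splitting. For part~(2) I would write, for fixed $v$ and with $x=(y_1,v)$, $x'=(y_2,v)$,
\begin{equation*}
|\phi(t,(y_1,v))-\overline{\phi}(t,(y_2,v))| \leq |\phi(t,(y_1,v))-\phi(t,(y_2,v))| + |\phi(t,(y_2,v))-\overline{\phi}(t,(y_2,v))|,
\end{equation*}
then take $\sup_{t\in[0,T]}$ and expectations. Using $(a+b)^2 \le 2a^2+2b^2$ and subadditivity of the supremum, the first term is controlled by Lemma~\ref{phi_lem}, giving $2e^{C_1 T}|y_1-y_2|^2$, and the second by Theorem~\ref{eul_est}, giving $2C_2 h$. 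Absorbing the factors of $2$ into relabelled constants $C_3$ (e.g. $C_3 = C_1 + \log 2 / T$, or simply noting $2e^{C_1T}\le e^{C_3 T}$ for a suitable $C_3$) and $C_4 = 2C_2$ yields \eqref{estimator_phi_random}. Crucially, since both $C_1$ and $C_2$ are independent of $v$ and $h$, so are $C_3$ and $C_4$.

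For part~(1), the pathwise statement, I would first establish a pathwise analogue of Lemma~\ref{phi_lem}: for fixed $\omega$, $\sup_{t}|\phi(t,(y_1,v))-\phi(t,(y_2,v))| \le e^{C_1 T}|y_1-y_2|$. This follows from applying Gr\"onwall's inequality directly to the pathwise integral equation \eqref{integral_form} for the difference of the two flows, using the Lipschitz condition \eqref{lipschitz_cond} on $b$ and $\sigma$ and the boundedness of the driving Wiener path on the fixed (random) interval $[0,T]$ — this is essentially the argument in \ref{App_A} run without taking expectations. Then the same triangle-inequality splitting as above, now with a pathwise version of Theorem~\ref{eul_est} (strong convergence in the almost-sure sense, which is standard for the Euler--Maruyama scheme under global Lipschitz coefficients — one may invoke the uniform-in-$\omega$ bound implicit in \cite{yuan2004convergence} or argue via a Borel--Cantelli/continuity argument), gives \eqref{estimator_phi} with $C_1$ the same stability constant and $C_2$ the pathwise Euler constant.

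The main obstacle is the pathwise Euler--Maruyama bound needed for part~(1): Theorem~\ref{eul_est} as stated is an $L^2$ (mean-square) estimate, and upgrading it to a genuinely pathwise inequality $\sup_t|\phi(t,x)-\overline{\phi}(t,x)| \le C_2 h$ for every fixed $\omega$ is not literally true with a deterministic constant — the honest statement is that there is an almost surely finite random variable $C_2(\omega)$, or the bound holds in probability/with a logarithmic correction. I would therefore either (i) interpret $C_2$ in \eqref{estimator_phi} as $\omega$-dependent, stating this explicitly, or (ii) weaken part~(1) to a statement holding with high probability, or (iii) restrict to the regime where the coefficients and their relevant derivatives are bounded so that the classical pathwise rate applies. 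For the purposes of the subsequent convergence analysis only the $L^2$ bound \eqref{estimator_phi_random} is actually invoked, so this subtlety does not affect the main results; I would flag it in a remark. The remaining steps — the triangle inequality, the $(a+b)^2$ expansion, and the constant bookkeeping — are entirely routine.
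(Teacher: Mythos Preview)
Your approach is exactly the one the paper takes: its entire proof reads ``The proof is straightforward using Theorem~\ref{eul_est} and Lemma~\ref{phi_lem},'' i.e.\ precisely your triangle-inequality splitting combined with the stability estimate and the Euler--Maruyama error bound. Your treatment of part~(2) matches this in full, and your identification of the gap in part~(1) --- that Theorem~\ref{eul_est} is an $L^2$ statement and does not directly yield a pathwise bound with a deterministic $C_2$ --- is a genuine subtlety that the paper's one-line proof does not address; your proposed remedies (random constant, or noting that only \eqref{estimator_phi_random} is used downstream) are reasonable ways to handle it.
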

\begin{proof}
The proof is straightforward using Theorem \ref{eul_est} and Lemma \ref{phi_lem}.
\end{proof}

\noindent To the family $\phi$ we associate a PDifMP constructed as in Section \ref{Thinning_Sec} that we denote $\overline{x}_t$. Throughout our analysis, we assume that $b$ and $\sigma$ satisfy the assumptions stated in Theorem \ref{existence} for all $v \in \mathbf{V}$. For any $x=(y,v)\in E$, we denote by $\phi(t,x)$, $t\in [0,T]$, the unique solution of the SDE

\begin{equation}
     \label{Sde_e}
        \left\{
    \begin{array}{ll}
d\phi(t,x_t)&=b(\phi(t,x_t),v_t)dt+\sigma(\phi(t,x_t),v_t)dW_t,\\
\phi(0,x)&=y,
 \end{array}
\right. 
    \end{equation}
where $y$ can be a constant or an $\mathcal{F}_0$-measurable random variable. Note that, the $0$ in $\phi(0, x)$ is a symbolic reset point, which updates $y$ to the system state at the beginning of each jump, thereby dynamically setting the initial condition for the evolution of $\phi$.\\
\noindent Now, given a time step $h>0$, we define the partition points of the interval $[0,T]$ as $\overline{t}_i=ih$ for $i\geq 0$. For all $x=(y,v) \in E$, we iteratively construct the discrete-time process $\overline{y}_i$ as follows:

\begin{equation}
\overline{y}_{i+1} = \overline{y}_{i} + b(\overline{y}_{i},v)h+ \sigma(\overline{y}_{i},v)\Delta W_i,
\end{equation}
where $\Delta W_i=W_{\overline{t}_{i+1}}-W_{\overline{t}_{i}}$ is the increment of a standard Wiener process between times $\overline{t}_{i}$ and $\overline{t}_{i+1}$.\\
For any $x=(y,v)\in E$ and $t \in [\overline{t}_i,  \overline{t}_{i+1}]$, we set the continuous time interpolation of the numerical solution

\begin{equation}
\label{varphi_bar}
\overline{\phi}(t,x)=\overline{y}_{i} + b(\overline{x}_{i})(t-\overline{t}_i)+ \sigma(\overline{x}_{i})( W_t-W_{\overline{t}_i}).
\end{equation}
In other words, the scheme approximates the coefficients $b$ and $\sigma$ as piecewise constant within each time interval $t \in [\overline{t}_i,  \overline{t}_{i+1}]$. Note that $\overline{y}_i$ in Equation \eqref{varphi_bar} represents the starting value for each successive time interval $[t_i, t_{i+1}]$, rather than the post-jump values.\\ 
To ensure that the flow maps and their approximations remain bounded in expectation for all $v \in \mathbf{V}$, we introduce the following lemma.

\begin{lemma}{(Mao \cite{mao2007stochastic})}
\label{est_1}
Assuming the linear growth condition (\ref{linear_growth}) holds for all $x=(y,v)\in E$ with $y=\overline{y}$, there exists a positive constant $K$ such that for fixed $\omega \in \Omega$, the functions $\phi(t,x)$ and $\overline{\phi}$ satisfy 
\begin{equation*}
    \mathbb{E}[\sup_{t\in[0,T]} \lvert \phi(t,x) \rvert ^2] \leq (1+3\mathbb{E}\lvert y \rvert^2) e^{3KT(T+1)},
\end{equation*}
and 
\begin{equation*}
    \mathbb{E}[\sup_{t\in[0,T]} \lvert \overline{\phi}(t,x) \rvert ^2] \leq (1+3\mathbb{E}\lvert y \rvert^2) e^{3KT(T+1)}.
\end{equation*}
\end{lemma}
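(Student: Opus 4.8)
\textbf{Proof plan for Lemma \ref{est_1}.}
The plan is to treat the two bounds in parallel, since the continuous-time Euler--Maruyama interpolation \eqref{varphi_bar} is structurally an It\^o process of the same type as the exact flow \eqref{Sde_e}, only with frozen coefficients on each subinterval. First I would fix $v \in \mathbf{V}$ and $\omega$, write the exact flow in integral form as in \eqref{integral_form} (with reset point $0$), namely $\phi(t,x) = y + \int_0^t b(\phi(s,x),v)\,ds + \int_0^t \sigma(\phi(s,x),v)\,dW_s$, and take $\sup_{r\le t}$ of $|\phi(r,x)|^2$. Using the elementary inequality $|a+b+c|^2 \le 3|a|^2 + 3|b|^2 + 3|c|^2$, I would bound $\mathbb{E}[\sup_{r\le t}|\phi(r,x)|^2]$ by $3\mathbb{E}|y|^2$ plus a drift term controlled by Cauchy--Schwarz ($|\int_0^t b\,ds|^2 \le t\int_0^t |b|^2\,ds$) plus a diffusion term controlled by Doob's $L^2$ maximal inequality followed by the It\^o isometry ($\mathbb{E}\sup_{r\le t}|\int_0^r \sigma\,dW_s|^2 \le 4\mathbb{E}\int_0^t |\sigma|^2\,ds$). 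Applying the linear growth condition \eqref{linear_growth} to each of $|b|^2$ and $|\sigma|^2$, and absorbing the constants $K_2$, $t\le T$ and the factor $4$ into a single constant $K$, yields
\begin{equation*}
\mathbb{E}\Big[\sup_{r\le t}|\phi(r,x)|^2\Big] \le 1 + 3\mathbb{E}|y|^2 + 3K(T+1)\int_0^t \mathbb{E}\Big[\sup_{r\le s}|\phi(r,x)|^2\Big]\,ds,
\end{equation*}
after pulling the $1+|\phi|^2$ out of the growth bound and noting $|\phi(s,x)|^2 \le \sup_{r\le s}|\phi(r,x)|^2$. Grönwall's inequality then gives $\mathbb{E}[\sup_{t\le T}|\phi(t,x)|^2] \le (1+3\mathbb{E}|y|^2)\,e^{3KT(T+1)}$, which is the first claim; the constant is visibly independent of $v$ because $K_2$ in \eqref{linear_growth} is, and independent of $h$ because $h$ has not yet entered.

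For the numerical interpolation $\overline{\phi}$, I would repeat the same argument. Writing $\overline{\phi}(t,x) = y + \int_0^t b(\overline{\phi}(\underline{s},x),v)\,ds + \int_0^t \sigma(\overline{\phi}(\underline{s},x),v)\,dW_s$, where $\underline{s} = \overline{t}_i$ for $s \in [\overline{t}_i,\overline{t}_{i+1})$ denotes the left endpoint of the mesh interval containing $s$ (this is exactly \eqref{varphi_bar} in integral form), the same three-term split, Cauchy--Schwarz, Doob, It\^o isometry and linear growth apply verbatim, with the only difference that the integrand is evaluated at the piecewise-constant argument $\overline{\phi}(\underline{s},x) = \overline{y}_i$. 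Since $|\overline{\phi}(\underline{s},x)|^2 \le \sup_{r\le s}|\overline{\phi}(r,x)|^2$ just as well, one arrives at the identical integral inequality with the same constant $K$, and Grönwall again delivers $\mathbb{E}[\sup_{t\le T}|\overline{\phi}(t,x)|^2] \le (1+3\mathbb{E}|y|^2)\,e^{3KT(T+1)}$.

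Two points deserve care. The first is the measurability/integrability prerequisite for Grönwall: one should argue a priori that $t \mapsto \mathbb{E}[\sup_{r\le t}|\phi(r,x)|^2]$ is finite (so that subtracting it is legitimate), which follows from Theorem \ref{existence} — the solution exists in $E$ and is non-exploding — and, for $\overline{\phi}$, from the fact that each $\overline{y}_i$ is a finite linear combination of Gaussian increments and hence square-integrable, with only finitely many mesh points on $[0,T]$. The standard fix, if one wants to be fully rigorous, is to run the estimate first with a stopping time $\tau_N = \inf\{t : |\phi(t,x)| \ge N\}$, obtain the bound uniformly in $N$, and let $N\to\infty$ by monotone convergence. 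The second point, which I expect to be the only genuinely delicate bookkeeping, is the claim that the constant is \emph{independent of $h$}: this is true precisely because the linear growth bound \eqref{linear_growth} on $b(\overline{y}_i,v)$ and $\sigma(\overline{y}_i,v)$ does not see the mesh at all — it controls the coefficients pointwise in the state — so the frozen-coefficient structure of $\overline{\phi}$ never introduces an $h$-dependent factor; the mesh only enters through which $\overline{y}_i$ is used, and that is still dominated by the running supremum. Since both displays in the statement have the same right-hand side, the proof is complete once the two Grönwall applications are carried out. Indeed this is a standard a priori moment bound for Euler--Maruyama, and I would cite Mao \cite{mao2007stochastic} for the detailed execution while including the three-term split above as the skeleton.
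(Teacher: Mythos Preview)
The paper does not actually supply a proof of this lemma: it is stated with the attribution ``(Mao \cite{mao2007stochastic})'' and left without argument, so the ``paper's own proof'' is simply a citation to Mao's textbook. Your proof plan is correct and is precisely the standard argument one finds there --- the three-term split via $|a+b+c|^2 \le 3(|a|^2+|b|^2+|c|^2)$, Cauchy--Schwarz on the drift integral, Doob's maximal inequality plus the It\^o isometry on the stochastic integral, the linear growth bound \eqref{linear_growth}, and Gr\"onwall --- applied verbatim to both the exact flow and the continuous Euler interpolation. Your remarks on the localisation step and on why the constant is independent of $h$ are accurate and complete the argument.
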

\subsection{Thinned Euler Maruyama Algorithm}
\noindent The purpose of this construction is to approximate the solution of the PDifMP $(x_t)_{t\in[0, T]}$ by a process $(\overline{x}_t)_{t\in[0, T]}$. The approach involves simulating the continuous part of $(x_t)_{t\in [0,T]}$ using the Euler-Maruyama scheme with a step size $h$ and determining the jump times by simulating a Poisson process with intensity function $\lambda(\overline{\phi}(t,x))$. The simulation of the piecewise diffusion Markov process \((x_t)_{t \in [0, T]}\) is constructed as follows:
\begin{enumerate}
    \item \textbf{Initialisation and discretisation:}
    At time \(t=0\), initialise the continuous stochastic component \(y_t\) of \((x_t)_{t \in [0, T]}\) at \(y_0 = y\). Employ the Euler-Maruyama scheme to discretise the continuous trajectory and define the piecewise linear interpolation \(\overline{\phi}(t, x)\) as outlined in Equation (\ref{varphi_bar}). This sets up the framework for approximating the path of the continuous component between jumps.
    \item \textbf{Continuous evolution until the first jump:}
    The continuous component \(y_t\) follows the trajectory determined by \(\overline{\phi}(t, x_0)\) up to the occurrence of the first jump at time \(\overline{T}_1\). This jump time is calculated using a non-homogeneous Poisson process. The waiting time \(\tau_1\) for this jump is derived from an exponential distribution, reflecting the current state-dependent rate, as specified in Equations (\ref{first_jump_time}) and (\ref{first_waiting_time}). Here, we replace \(\phi(T^{*}_k, x_0)\) by \(\overline{\phi}(T^{*}_k, x_0)\) to accommodate the discretised process.
    \item \textbf{State update at the first jump time:} At time $\overline{T}_1$, after iteratively applying the Euler-Maruyama scheme with step size $h$ to simulate the continuous trajectory from $t=0$, the continuous component of $\overline{x}_{\overline{T}_1}$ is updated to the value calculated by the final application of the scheme, i.e. $y_1=\overline{\phi}(\overline{T}_1, x_0)$. This update effectively captures the state of the continuous component at the jump time, providing a new starting point for subsequent evolution.
    \item \textbf{Post-jump evolution:}
    After the jump, update the discrete jump component state from \(\overline{v}_0\) to \(\overline{v}_1\), marking the transition caused by the jump. Resume the simulation using the new flow \(\overline{\phi}(t - \overline{T}_1, \overline{x}_1)\) from the updated state until the next jump time \(\overline{T}_2\) occurs. This iterative process continues, computing the flow after each jump based on the updated starting state at each jump time, until the end of the simulation period at time \(T\).
\end{enumerate}

\noindent In this context, the Euler-Maruyama scheme is applied iteratively in discrete intervals of size $h$, where $h \leq \overline{T}_n$, $n\geq 1$, to simulate the continuous dynamics of the process up to the first jump time. The number of iterations depends on the chosen $h$ and the duration of the interval $[\overline{T}_n, \overline{T}_{n+1}]$. Furthermore, due to the dependence of the intensity function on the entire path of the continuous state, the Euler-Maruyama scheme has to be applied at each iteration step.

\begin{remark}
 The thinning method used in our simulation allows for the detection and processing of multiple jumps within a single time step, should they occur. Each jump time, generated independently by the Poisson process, is evaluated using the thinning acceptance criteria (\(\mathcal{U}_k \lambda^{*} \leq \lambda(\overline{\phi}(T^{*}_k, x_0), v_0)\)). If multiple jumps pass this test during the same interval, they are processed sequentially to ensure that the state accurately reflects the impact of each jump before proceeding.
\end{remark}

\begin{notation}
Hereafter, we denote by $\overline{T}_n$ the sequence of jump times of the process $(\overline{x}_t)_{t\in [0,T]}$ and by $(\overline{N}_t)_{t\in [0,T]}$ the associated counting process.
\end{notation}
\begin{remark}{(Adaptive Grid)}
Our approach, in contrast to traditional Euler-Maruyama methods with fixed time grids, employs a jump-adapted time discretisation scheme as in \cite{platen2010jump}. This method dynamically adjusts the discretisation grid to match the process dynamics. The time points of interest $ \overline{T}_n, \{n=0,\ldots,\overline{N}_T\} $ are determined by \eqref{random_index} and \eqref{tau_def}, where $ \overline{N}_T $ is the total number of intervals in the adaptive grid. For each interval $ [\overline{T}_n, \overline{T}_{n+1}) $, we introduce grid points 
\begin{equation}
    \label{grid_points}
    \{ \overline{T}_{n}+kh\}, \quad k=0,\ldots,\left(\frac{(\overline{T}_{n+1}\wedge T)-\overline{T}_n}{h}\right),
\end{equation}

to ensure an adaptive number of subdivisions based on the process variability and the rate of change.
\end{remark}
\subsection{Coupling and Convergence to the True Process}
While the discretised approximation provides a practical approach to simulating PDifMPs, it is also essential to consider how closely these approximations adhere to the true underlying process. Here, we explore the coupling between the true process and its approximation primarily to analyse the convergence properties of our simulation method.\\
Both the true process and its discretised approximation are derived from the same underlying stochastic drivers: \(N^{*}_t\), \(\mathcal{U}_k\), and \(\mathcal{V}_k\). Despite this common foundation, there is a probability that they may jump at different times or transition into different states due to the discretisation errors and the approximation methods used.\\
However, in specific scenarios where the jump characteristics \((\lambda, \mathcal{Q})\) of the PDifMP are solely dependent on the discrete component \(v\), we ensure that both the true process and its approximation exhibit identical jump timings and state transitions for the \(v\)-component. This alignment is crucial for models where the jump dynamics are critical to the behaviour and analysis of the system.\\
These concepts are further elaborated in \cite{durmus2021piecewise}, primarily focusing on PDMPs, but their extension to PDifMPs is straightforward.
\subsection{Preliminary Tools and Results}
\label{preliminary_tools}
Here, we introduce key definitions, theorems, and lemmas that provide the groundwork for subsequent detailed analyses of convergence in the mean square sense and weak error expansion for PDifMP simulations.
\begin{definition}
\label{taudef}
Analogously to \cite{lemaire2020thinning}, we define $\overline{\tau}^{\star}:=\inf \{ k>0 : (\tau_k,v_k) \neq (\overline{\tau}_k,\overline{v}_k) \}$.
\end{definition}
\noindent The random variable $\overline{\tau}^{\star}$ allows us to partition the trajectories of the couple of processes $(x_t, \overline{x}_t)$ based on whether the first thinned point in the approximating process occurs at the same time as in the true process. More specifically, we can define the following event:
\begin{equation}
\label{T_events}
    \{ \min(T_{\overline{\tau}^{\star}},\overline{T}_{\overline{\tau}^{\star}})>T \}=\{ N_T=\overline{N}_T, \, (T_1,v_1)=(\overline{T}_{1},\overline{v}_{1}),\ldots, (T_{N_T},v_{N_T})=(\overline{T}_{\overline{N}_T},\overline{v}_{\overline{N}_T})\}.
\end{equation}
\begin{lemma}
\label{betalem_1}
Let $\phi$ and $\overline{\phi}$ satisfy the conditions of Lemma \ref{lemma-combined}. Let $(t_n)_{n\geq 0}$ be an increasing sequence of non-negative real numbers with $t_0=0$ and let $(\alpha_n)_{n\geq 0}$ be a sequence of $\mathbf{V}$-valued components. For a given $y\in E_1$, we define iteratively the sequences $(\beta_n)_{n\geq 0}$ and $(\overline{\beta}_n)_{n\geq 0}$ as follows
 \begin{equation*}
        \left\{
    \begin{array}{ll}
 \beta_n &= \phi((t_n-t_{n-1}), (\beta_{n-1}, \alpha_{n-1})),\\
 \beta_0&=y,
 \end{array}
\right. 
    \end{equation*}
    and 
    \begin{equation*}
        \left\{
    \begin{array}{ll}
 \overline{\beta}_n &= \overline{\phi}((t_n-t_{n-1}), (\overline{\beta}_{n-1}, \alpha_{n-1})),\\
 \overline{\beta}_0&=y.
 \end{array}
\right. 
    \end{equation*}
\begin{enumerate}
    \item For all fixed $\omega \in \Omega$, for all $n\geq 1$ and for all $C_5>0$ and $C_6>0$, we have
\begin{equation}
    \label{Beta_est_fix_w}
     | \overline{\beta}_n-\beta_n| \leq e^{C_5t_n} n C_6 h , 
\end{equation}
   where $C_5$ and $C_6$ are independent of the discretisation step size $h$.
   \item For all fixed $t\in [0,T]$, for all $n\geq 1$ and for all $C_7>0$ and $C_8>0$, we have
\begin{equation}
    \label{Beta_est_fix_t}
\mathbb{E}[ | \overline{\beta}_n-\beta_n|]^2 \leq e^{C_7t_n} n C_8 h , 
\end{equation}
    where $C_7$ and $C_8$ are independent of the discretisation step size $h$.
\end{enumerate}
\end{lemma}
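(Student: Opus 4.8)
The plan is to prove both statements by induction on $n$, using Lemma \ref{lemma-combined} as the base case ($n=1$) and exploiting the semi-group property \eqref{semi_gr_pr} of the flow together with the contraction-type estimate \eqref{estimator_phi} to pass from $n-1$ to $n$. The key observation is that $\beta_n$ and $\overline{\beta}_n$ both arise by applying, respectively, the exact flow $\phi$ and the scheme $\overline{\phi}$ over the single interval $[t_{n-1},t_n]$ of length $\Delta_n := t_n - t_{n-1}$, but starting from the \emph{different} initial data $\beta_{n-1}$ and $\overline{\beta}_{n-1}$. So the natural decomposition is
\begin{equation*}
|\overline{\beta}_n - \beta_n| \leq \underbrace{|\overline{\phi}(\Delta_n,(\overline{\beta}_{n-1},\alpha_{n-1})) - \phi(\Delta_n,(\overline{\beta}_{n-1},\alpha_{n-1}))|}_{\text{one-step scheme error}} + \underbrace{|\phi(\Delta_n,(\overline{\beta}_{n-1},\alpha_{n-1})) - \phi(\Delta_n,(\beta_{n-1},\alpha_{n-1}))|}_{\text{propagation of the previous error}}.
\end{equation*}

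For part (1), I would bound the first term by $C_2 h$ (the deterministic Euler-type local error on an interval of length $\le T$, from \eqref{estimator_phi} with equal initial conditions) and the second term by $e^{C_1 \Delta_n}|\overline{\beta}_{n-1} - \beta_{n-1}|$ (the pathwise Lipschitz-in-initial-data bound, again from \eqref{estimator_phi} with $h$-contribution dropped since both arguments use the exact flow). Substituting the inductive hypothesis $|\overline{\beta}_{n-1}-\beta_{n-1}| \le e^{C_5 t_{n-1}}(n-1)C_6 h$ gives
\begin{equation*}
|\overline{\beta}_n - \beta_n| \leq C_2 h + e^{C_1\Delta_n} e^{C_5 t_{n-1}} (n-1) C_6 h \leq e^{C_5 t_n}\bigl( C_2 + (n-1)C_6\bigr)h,
\end{equation*}
provided $C_5 \ge C_1$ and $C_6 \ge C_2$, so that $C_2 + (n-1)C_6 \le n C_6$ and $e^{C_1\Delta_n}e^{C_5 t_{n-1}} \le e^{C_5 t_n}$. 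This closes the induction. For part (2), the argument is structurally identical but carried out in $L^2$: I would apply the elementary inequality $(a+b)^2 \le 2a^2 + 2b^2$ (or, more carefully, $\mathbb{E}[|X+Y|]^2 \le (\mathbb{E}|X| + \mathbb{E}|Y|)^2$ expanded), use \eqref{estimator_phi_random} for the mean-square one-step error $C_4 h$ and for the mean-square Lipschitz propagation $e^{C_3 T}\mathbb{E}|\overline{\beta}_{n-1}-\beta_{n-1}|^2$, and absorb constants into $C_7, C_8$ by the same monotonicity bookkeeping; the factor $n$ again emerges from summing $n$ one-step contributions of size $O(h)$.

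The main technical point to be careful about — rather than a deep obstacle — is the constant bookkeeping in the inductive step: one must verify that the constants $C_5,\dots,C_8$ can be chosen uniformly in $n$ and $h$ (which is fine because they only need to dominate the fixed constants $C_1,\dots,C_4$ from Lemma \ref{lemma-combined} and the $\Delta_n$ are bounded by $T$), and that the $\alpha_{n-1}$-dependence is harmless since all estimates in Lemma \ref{lemma-combined} are uniform over $v \in \mathbf{V}$. A secondary subtlety in part (2) is that the statement is phrased with $\mathbb{E}[|\cdot|]^2$ (the square of the expectation) rather than $\mathbb{E}[|\cdot|^2]$; I would either prove the stronger $\mathbb{E}[|\cdot|^2]$ bound and deduce the stated one by Jensen, or note that the triangle inequality propagates cleanly through $\mathbb{E}|\cdot|$ and then square at the end. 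Either way no genuinely new estimate beyond Lemma \ref{lemma-combined} and the semi-group property is needed.
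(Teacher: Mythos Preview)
Your proposal is correct and follows essentially the same route as the paper: iterate the one-step bound from Lemma~\ref{lemma-combined} and accumulate $n$ copies of the $O(h)$ local error. The paper streamlines the presentation slightly by applying the combined estimate \eqref{estimator_phi_random} directly as the recursion $|\overline{\beta}_k-\beta_k|^2 \le e^{C_1(t_k-t_{k-1})}|\overline{\beta}_{k-1}-\beta_{k-1}|^2 + C_2 h$ (rather than splitting into two pieces via the triangle inequality), then multiplies by $e^{-C_1 t_k}$ and sums so the first term telescopes, which avoids the constant bookkeeping you describe but is otherwise the same argument.
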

\begin{proof}
We present here the proof of the second part of Lemma \ref{betalem_1}, where $\eqref{Beta_est_fix_t}$ is to be shown. The proof for the first part is similar and can be found in \cite{lemaire2020thinning}.\\
Let $n\geq 1$, $C_8>0$, and $C_9>0$. From estimate $\eqref{estimator_phi_random}$, we have for all $k\leq n$,
    \begin{equation*}
    \vert \overline{\beta}_k- {\beta}_k  \vert ^2 \leq e^{C_1(t_k-t_{k-1})} \vert \overline{\beta}_{k-1}- {\beta}_{k-1}  \vert ^2 +C_2 h,
\end{equation*}
by rearranging the terms, we obtain
\begin{equation}
\label{eq_n}
   e^{-C_1t_{k}} \vert \overline{\beta}_k- {\beta}_k  \vert ^2 \leq e^{-C_1t_{k-1}} \vert \overline{\beta}_{k-1}- {\beta}_{k-1}  \vert ^2 +C_2 h.
\end{equation}
Since $\overline{\beta}_0={\beta}_0$, we sum up $\eqref{eq_n}$ for $1\leq k \leq n$ to obtain
\begin{equation*}
    \mathbb{E} [ | \overline{\beta}_n-\beta_n|^2] \leq e^{C_1t_n}n C_2 h.
    \end{equation*}
\end{proof}

\section{Convergence in the mean-square sense}
\label{section4}
\label{Scheme_Convergence}
The main result of this section is Theorem \ref{Strong_thm} that establishes mean-square error estimates for the approximation scheme. Let us first present the essential assumptions on the process characteristics required to show Theorem \ref{Strong_thm}.
\begin{assumption}
\label{MSE_Ass1}
Assume that $b$ and $\sigma$ satisfy condition (\ref{lipschitz_cond}) of Theorem \ref{existence} and let $\lambda$ verify Assumption \ref{assump_lambda}.
For all $v \in \mathbf{V}$ and all $B \in \mathcal{B}(\mathbf{V})$, the following hold
\begin{enumerate}
    \item for any $y_1, y_2 \in E_1\times E_1$ there exists a positive constant $C_{\lambda}$ independent of $v$, such that
    \begin{equation}
        \vert \lambda(y_1,v)- \lambda(y_2,v)   \vert \leq C_{\lambda} \vert y_1-y_2 \vert,
    \end{equation}
    \item  for any $y_1, y_2 \in E_1\times E_1$ there exists a positive constant $C_{\mathcal{Q}}$ independent of $v$, such that
     \begin{equation}
     \label{Lip_Q}
           \vert \mathcal{Q}(y_1,v)- \mathcal{Q}(y_2,v)   \vert \leq C_{\mathcal{Q}} \vert y_1-y_2 \vert.
     \end{equation}
\end{enumerate}
\end{assumption}

\begin{assumption}
\label{Lipschitz_H}
Let $F: E \rightarrow \mathbb{R}$ be a bounded measurable function. For any $x=(y,v) \in E$ the function $F$ is globally Lipschitz.
\end{assumption}

\begin{theorem}
\label{Strong_thm}
Let the flow maps $\phi$ and $\overline{\phi}$ satisfy Lemma \ref{lemma-combined} and let $(x_t)_{t\in [0,T]}$ be a PDifMP and $(\overline{x}_t)_{t\in [0,T]}$ be its approximation constructed as described in Section \ref{section3} with $x_0=\overline{x}_0=x$ for some $x\in E$. Let $F$ be a bounded measurable function on $E$ satisfying 
Assumptions \ref{Trans_Assump}, \ref{assumptionestimate} and \ref{MSE_Ass1}. Then, there exists a positive constant $L_1$ independent of $h$ and $v$ such that
\begin{equation}
\label{strong_est}
\mathbb{E}\big[ |F(\overline{x}_T)-F(x_T)|^2 \big]\leq L_1 h.
\end{equation}
\end{theorem}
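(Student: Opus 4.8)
The plan is to condition on the coupling event
$\mathcal{E}:=\{\min(T_{\overline{\tau}^{\star}},\overline{T}_{\overline{\tau}^{\star}})>T\}$ from \eqref{T_events}, on which the two processes share the same number of jumps $N_T=\overline{N}_T$ and the same jump-time/discrete-state pairs $(T_j,v_j)=(\overline{T}_j,\overline{v}_j)$ for all $j\le N_T$, and on its complement $\mathcal{E}^c$. Since $F$ is bounded, say $|F|\le \|F\|_\infty$, we have the crude bound
$\mathbb{E}[|F(\overline{x}_T)-F(x_T)|^2 \mathbbm{1}_{\mathcal{E}^c}] \le 4\|F\|_\infty^2\,\mathbb{P}(\mathcal{E}^c)$, so the first task is to show $\mathbb{P}(\mathcal{E}^c)\le C h$. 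This is the heart of the argument and the main obstacle: on $\mathcal{E}^c$ the first discrepancy between $(\tau_k,v_k)$ and $(\overline{\tau}_k,\overline{v}_k)$ occurs either because a candidate Poisson time $T^*_k$ is accepted by one process's thinning test but rejected by the other — i.e. $\mathcal{U}_k\lambda^*$ falls between $\lambda(\phi(\cdot),v)$ and $\lambda(\overline{\phi}(\cdot),v)$ — or because, given a common acceptance, the transition kernel $\mathcal{Q}$ sends the two processes to different $v$-values via $\psi(\mathcal{V}_k,\cdot)$. The Lipschitz assumptions on $\lambda$ and $\mathcal{Q}$ (Assumption \ref{MSE_Ass1}) bound the probability of each such event at a single candidate time by a constant times $|\phi(T^*_k,x)-\overline{\phi}(T^*_k,x)|$ (for $\lambda$, because $\mathcal{U}_k$ is uniform; for $\mathcal{Q}$, because $\mathcal{V}_k$ is uniform and the partition endpoints $a_j$ move Lipschitz-continuously). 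By Lemma \ref{lemma-combined} and the iterated-flow estimate of Lemma \ref{betalem_1}, $\mathbb{E}[|\phi-\overline{\phi}|^2]$ at the relevant times is $O(h)$ up to an exponential-in-$T$ prefactor; summing over the (a.s. finite, with bounded expectation by Assumption \ref{assumptionestimate}) number of candidate Poisson points up to $T$ — using that $N^*_T$ has finite expectation $\lambda^* T$ and is independent of the $\mathcal{U}_k,\mathcal{V}_k$ — yields $\mathbb{P}(\mathcal{E}^c)\le C h$. One must be slightly careful to use Cauchy–Schwarz to pass from the $O(h)$ bound on the second moment to an $O(\sqrt h)$ bound on the probability of a single mismatch, and then argue that the relevant quantity controlling the probability is in fact the first moment of $|\phi-\overline\phi|$, which by Lemma \ref{lemma-combined}(1)/Lemma \ref{betalem_1}(1) (the pathwise estimate) is genuinely $O(h)$ — this is exactly why the paper records both the pathwise and the mean-square versions.

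On the event $\mathcal{E}$, the two processes have identical jump structure, so $\overline{x}_T$ and $x_T$ differ only through the accumulated Euler–Maruyama error in the continuous component over the (common) final interval $[\overline{T}_{N_T},T]$, with a common starting discrete state; more precisely, writing the post-jump continuous values as the iterated flows $\beta_n$ and $\overline\beta_n$ of Lemma \ref{betalem_1} driven by the common sequences $(T_n)$ and $(v_n)$, we get $|y_{N_T}-\overline{y}_{N_T}|$ controlled by \eqref{Beta_est_fix_t}, hence $\mathbb{E}[|y_T-\overline y_T|^2\mathbbm{1}_{\mathcal{E}}]\le \mathbb{E}[e^{C_7 T} N_T C_8 h]\le e^{C_7T}C_8\, \mathbb{E}[N_T]\, h = O(h)$, again using the boundedness of $\mathbb{E}[N_T]$. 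Then the global Lipschitz property of $F$ (Assumption \ref{Lipschitz_H}) turns this into $\mathbb{E}[|F(\overline{x}_T)-F(x_T)|^2\mathbbm{1}_{\mathcal{E}}]\le \mathrm{Lip}(F)^2\,\mathbb{E}[|y_T-\overline y_T|^2\mathbbm{1}_{\mathcal{E}}] = O(h)$.

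Combining the two contributions gives $\mathbb{E}[|F(\overline{x}_T)-F(x_T)|^2]\le \mathrm{Lip}(F)^2\cdot O(h) + 4\|F\|_\infty^2\cdot O(h) = L_1 h$ for a constant $L_1$ depending on $T$, $\lambda^*$, the Lipschitz and growth constants, and the bounds on $F$, but independent of $h$ and $v$, as claimed. I would organise the write-up as: (i) the decomposition over $\mathcal{E}/\mathcal{E}^c$; (ii) a lemma bounding $\mathbb{P}(\mathcal{E}^c)\le Ch$ via a union bound over candidate Poisson times and the Lipschitz estimates on $\lambda$ and $\mathcal{Q}$; (iii) the on-$\mathcal{E}$ estimate via Lemma \ref{betalem_1} and $\mathbb{E}[N_T]<\infty$; (iv) assembling the pieces. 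The delicate point throughout is keeping all constants uniform in $v$, which is exactly what the "independent of $v$" clauses in Lemmas \ref{phi_lem}, \ref{lemma-combined}, \ref{betalem_1} and Assumption \ref{MSE_Ass1} are designed to guarantee.
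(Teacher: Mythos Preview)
Your proposal is correct and follows essentially the same route as the paper: the decomposition over $\mathcal{E}$ and $\mathcal{E}^c$, the bound $\mathbb{P}(\mathcal{E}^c)\le Ch$ via the Lipschitz assumptions on $\lambda$ and $\mathcal{Q}$ together with the pathwise iterated-flow estimate of Lemma~\ref{betalem_1}(1), and the on-$\mathcal{E}$ estimate via the Lipschitz property of $F$ and Lemma~\ref{betalem_1} are exactly what the paper does (its $D_1$ and $D_2$). The only minor refinement is that the sum over the first-mismatch index $k$ picks up a factor $k$ from Lemma~\ref{betalem_1}, so the paper actually uses $\sum_{k\ge1}k\,\mathbbm{1}_{T_k\le T}\le N_T^2$ and $\mathbb{E}[N_T^2]\le\mathbb{E}[(N_T^*)^2]<\infty$ rather than just $\mathbb{E}[N_T^*]$, but this does not affect your argument.
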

\begin{proof}
The proof consists of two steps, with the main part being inspired by \cite{lemaire2020thinning}. Using Definition \ref{taudef}, the term in the left-hand side of (\ref{strong_est}) can be written as follows
\small
\begin{align*}
  \mathbb{E}\big[ |F(\overline{x}_T)-F(x_T)|^2 \big]&=\mathbb{E}\big[ |F(\overline{x}_T)-F(x_T)|^2\mathbbm{1}_{\min(T_{\overline{\tau}^{\star}},\overline{T}_{\overline{\tau}^{\star}})\leq T} \big] +\mathbb{E}\big[ |F(\overline{x}_T)-F(x_T)|^2 \mathbbm{1}_{\min(T_{\overline{\tau}^{\star}},\overline{T}_{\overline{\tau}^{\star}})>T} \big]\\
  &=:D_1+D_2.
\end{align*}
\normalsize
\noindent The order of $D_1$ is given by the order of the probability that the discrete processes $(T_n,v_n)$ and $(\overline{T}_n,\overline{v}_n)$ differ on the interval $[0,T]$. The order of $D_2$ is determined by the order of the Euler-Maruyama scheme squared under the assumption that $(T_n,v_n)$ and $(\overline{T}_n,\overline{v}_n)$ remain equal on $[0,T]$. Thus, we proceed to show that both $D_1$ and $D_2$ are of order $h$.\\

\noindent \textit{Step 1: Estimation of $D_1$}.\\

\noindent Under Assumption \ref{Lipschitz_H}, the function $F$ is bounded with a constant $M_F>0$. Consequently, by applying the triangle inequality, we obtain
$${\lvert F(\overline{x}_T)-F(x_T) \rvert^2 \leq   4 M^2_F}.$$
This implies that
\begin{equation*}
 D_1\leq 4M^2_F\mathbb{P}\big(\min(T_{\overline{\tau}^{\star}},\overline{T}_{\overline{\tau}^{\star}}) \leq T \big).
\end{equation*}
\noindent Furthermore, for $k\geq 1$, we have
\begin{equation*}
    \{\overline{\tau}^{\star}=k \}=\{ \overline{\tau}^{\star}>k-1 \} \cap \{(\tau_k,v_k)\neq (\overline{\tau}_k,\overline{v}_k)\}.
\end{equation*}
\noindent Hence,
\begin{align*}
 \mathbb{P}\left(\min \left(T_{\overline{\tau}^{\star}}, \overline{T}_{\overline{\tau}^{\star}}\right) \leq T\right)
 &=\sum_{k \geq 1} \mathbb{E}\left[\mathbbm{1}_{\min \left(T_{k}, \overline{T}_{k}\right) \leq T} \mathbbm{1}_{\overline{\tau}^{\star}=k}\right],\\
 &=\sum_{k \geq 1} \mathbb{E}\left[\mathbbm{1}_{\min \left(T_{k}, \overline{T}_{k}\right) \leq T} \mathbbm{1}_{\overline{\tau}^{\star}>k-1} \mathbbm{1}_{\left(\tau_{k}, v_{k}\right) \neq \left(\overline{\tau}_{k}, \overline{v}_{k}\right)}\right],\\
 &=\sum_{k \geq 1} \mathbb{E}\left[\mathbbm{1}_{\min \left(T_{k}, \overline{T}_{k}\right) \leq T} \mathbbm{1}_{\overline{\tau}^{\star}>k-1} \left(\mathbbm{1}_{\tau_{k} \neq \overline{\tau}_{k}} + \mathbbm{1}_{\tau_{k} = \overline{\tau}_{k}} \mathbbm{1}_{v_{k} \neq \overline{v}_{k}}\right)\right],\\
 &=\sum_{k \geq 1} \mathbb{E}\left[\mathbbm{1}_{\min \left(T_{k}, \overline{T}_{k}\right) \leq T} \mathbbm{1}_{\overline{\tau}^{\star}>k-1} \left(\mathbbm{1}_{\tau_{k} \neq \overline{\tau}_{k}}\right)\right] \\
 & \quad+ \sum_{k \geq 1} \mathbb{E}\left[\mathbbm{1}_{\min \left(T_{k}, \overline{T}_{k}\right) \leq T} \mathbbm{1}_{\overline{\tau}^{\star}>k-1} \left(\mathbbm{1}_{\tau_{k} = \overline{\tau}_{k}} \mathbbm{1}_{v_{k} \neq \overline{v}_{k}}\right)\right],\\
 &=\sum_{k \geq 1} \left(\mathbb{E}\left[\mathbbm{1}_{\min \left(T_{k}, \overline{T}_{k}\right) \leq T} \mathbbm{1}_{\overline{\tau}^{\star}>k-1} \left(\mathbbm{1}_{\tau_{k} \neq \overline{\tau}_{k}} (\mathbbm{1}_{v_{k} \neq \overline{v}_{k}} + \mathbbm{1}_{v_{k} = \overline{v}_{k}}) \right)\right]\right)\\
& \quad +\sum_{k \geq 1} \left( \mathbb{E}\left[\mathbbm{1}_{\min \left(T_{k}, \overline{T}_{k}\right) \leq T} \mathbbm{1}_{\overline{\tau}^{\star}>k-1} \mathbbm{1}_{\tau_{k} = \overline{\tau}_{k}} \mathbbm{1}_{v_{k} \neq \overline{v}_{k}}\right]\right),\\
& \leq \sum_{k \geq 1} \overline{J}_{k}+2 \overline{I}_{k} ,
\end{align*}
where

\begin{equation}
\small{
\overline{J}_{k}:=\mathbb{E}\left[\mathbbm{1}_{\min \left(T_{k}, \overline{T}_{k}\right) \leq T} \mathbbm{1}_{\overline{\tau}^{\star}>k-1} \mathbbm{1}_{\tau_{k}=\overline{\tau}_{k}} \mathbbm{1}_{v_{k} \neq \overline{v}_{k}}\right]  \quad \text{and} \quad \overline{I}_{k}:=\mathbb{E}\left[\mathbbm{1}_{\min \left(T_{k}, \overline{T}_{k}\right) \leq T} \mathbbm{1}_{\overline{\tau}^{\star}>k-1} \mathbbm{1}_{\tau_{k} \neq \overline{\tau}_{k}}\right].}
\label{J_et_I}
\end{equation}
\newline
\noindent We consider the first term $\overline{J}_{k}$. For $k \geq 1$, we have $\left\{\tau_{k}=\overline{\tau}_{k}\right\}=\left\{T_{k}=\overline{T}_{k}\right\}$, and on the event $\left\{T_{k}=\overline{T}_{k}\right\}$, we have $\min \left(T_{k}, \overline{T}_{k}\right)=T_{k}=\overline{T}_{k}$. Thus, $\overline{J}_{k}$ can be written as
\begin{equation*}
    \overline{J}_{k}=\mathbb{E}\left[\mathbbm{1}_{T_{k} \leq T} \mathbbm{1}_{\overline{\tau}^{\star}>k-1} \mathbbm{1}_{\tau_{k}=\overline{\tau}_{k}} \mathbbm{1}_{v_{k} \neq \overline{v}_{k}}\right].
\end{equation*}

\noindent Since $\overline{\tau}^{\star}>k-1$ if and only if $\left(\tau_{i}, v_{i}\right)=\left(\overline{\tau}_{i}, \overline{v}_{i}\right)$ for all $i=0,\ldots, k-1$, we can express $\overline{J}_{k}$ as

\begin{equation}
\label{J_K}
 \overline{J}_{k}=\sum_{\substack{1 \leq p_{1}<\ldots<p_{k}\\  \alpha_{1}, \ldots, \alpha_{k-1} \in \mathbf{V}}} \mathbb{E}\left[\underbrace{\mathbbm{1}_{\left\{\tau_{i}=\overline{\tau}_{i}=p_{i}, 1 \leq i \leq k\right\}} \mathbbm{1}_{\left\{v_{i}=\overline{v}_{i}=\alpha_{i}, 1 \leq i \leq k-1\right\}} \mathbbm{1}_{(T^{*}_{p_{k}}\leq T)}}_{:=P^{J}_k} \mathbbm{1}_{(v_{k} \neq \overline{v}_{k})}\right] .   
\end{equation}
\noindent Recall the sequence of jump times $(T^{*}_k)_{k\geq 1}$ and the two i.i.d sequences of random variables $(\mathcal{U}_k)_{k \geq 1}$ and $(\mathcal{V}_k)_{k \geq 1}$ defined in Section \ref{Thinning_Sec}. The discrete processes $v_k$ and $\overline{v_k}$ are given by
\begin{equation*}
v_k=(\psi(y_k,v_{k-1}), \mathcal{V}_k), \qquad \overline{v}_k=(\psi(\overline{y}_k,\overline{v}_{k-1}), \mathcal{V}_k).
\end{equation*}

\noindent  Let $\mathcal{J}=(\mathcal{U}_i, T_{j}^{*}, \mathcal{V}_q)$ for $1 \leq i \leq p_{k}$, $1 \leq j \leq p_{k}$, and $1 \leq q \leq k-1$. The vector $\mathcal{J}$ is independent of $\mathcal{V}_{k}$. Since the random variable $P^{J}_k$ depends on $\mathcal{J}$, conditioning by $\mathcal{J}$ in \eqref{J_K} and applying Lemma \ref{useful_lem} (see \ref{App_B}), we obtain

\begin{equation}
  \begin{aligned}
& \mathbb{E}\left[ P^{J}_k \mathbbm{1}_{v_{k} \neq \overline{v}_{k}}\right]
& \leq \mathbb{E}\left[P^{J}_k \sum_{j=1}^{|\mathbf{V}|-1}\lvert a_{j}\left(\overline{y}_{k},\alpha_{k-1}\right)-a_{j}\left(y_{k},\alpha_{k-1}\right)\rvert\right] . 
\end{aligned}
\label{B2_inq}
\end{equation}
\noindent Using the definition of $a_j$ (see Equation \eqref{eq:4}), the triangle inequality and Assumption \ref{MSE_Ass1}, we have
\begin{equation*}
    \sum_{j=1}^{|\mathbf{V}|-1}\left|a_{j}\left(\overline{y}_{k},\alpha_{k-1}\right)-a_{j}\left(y_{k},\alpha_{k-1}\right)\right| \leq \FTS{(|\mathbf{V}|-1)|\mathbf{V}|}{2} C_{\mathcal{Q}}\left|\overline{y}_{k}-y_{k}\right|,
\end{equation*}
where $C_{\mathcal{Q}}$ is the Lipschitz constant for $\mathcal{Q}$. Given that in \eqref{B2_inq} we are on the event
\begin{equation*}
   \left\{\tau_{i}=\overline{\tau}_{i}=p_{i}, 1 \leq i \leq k\right\} \bigcap\left\{v_{i}=\overline{v}_{i}=\alpha_{i}, 1 \leq i \leq k-1\right\}, 
\end{equation*}
applying Lemma \ref{betalem_1} implies 
\begin{equation*}
   \left|\overline{y}_{k}-y_{k}\right| \leq e^{L T_{p_{k}}^{*}}k C h. 
\end{equation*}
\noindent Therefore, $\overline{J}_{k} \leq C_{1} h \mathbb{E}\left[k\mathbbm{1}_{(T_{k} \leq T)} \right]$, where $C_{1}$ is a constant independent of $h$. The summation $\sum_{k \geq 1} k\mathbbm{1}_{(T_{k} \leq T)}$ represents the total number of jump times up to time $T$. Since the number of jump times $N_T$ is finite (see Assumption \ref{assumptionestimate}), we have
\begin{equation*}
  \sum_{k \geq 1} k\mathbbm{1}_{(T_{k} \leq T)}=\sum_{k=1}^{N_{T}} k \leq N_{T}^{2} \qquad \text{and} \qquad  \mathbb{E}\left[N_{T}^{2}\right] \leq \mathbb{E}\left[\left(N_{T}^{*}\right)^{2}\right]< +\infty.
\end{equation*}
Hence,
\begin{equation}
    \sum_{k \geq 1} \overline{J}_{k}=O\left(h\right).
    \label{Est_J}
\end{equation}
\noindent We now focus on the second term $\overline{I}_{k}$ in Equation \eqref{J_et_I}. Since  $\tau_k \neq \overline{\tau}_k$, this implies either $\tau_k>\overline{\tau}_k$, or $\tau_k<\overline{\tau}_k$, thus we can express $\overline{I}_{k}$ as

\begin{equation*}
   \begin{aligned}
\overline{I}_{k} &=\mathbb{E}\left[\mathbbm{1}_{\min \left(T_{k}, \overline{T}_{k}\right) \leq T} \mathbbm{1}_{\overline{\tau}^{\star}>k-1}\mathbbm{1}_{\tau_{k}<\overline{\tau}_{k}}+\mathbbm{1}_{\min \left(T_{k}, \overline{T}_{k}\right) \leq T} \mathbbm{1}_{\overline{\tau}^{\star}>k-1}\mathbbm{1}_{\tau_{k}>\overline{\tau}_{k}}\right].
\end{aligned} 
\end{equation*}

\noindent Given that  $\left\{\tau_{k}<\overline{\tau}_{k}\right\}=\left\{T_{k}<\overline{T}_{k}\right\}$ and $\left\{\tau_{k}>\overline{\tau}_{k}\right\}=\left\{T_{k}>\right.$ $\left.\overline{T}_{k}\right\}$, we can further simplify $\overline{I}_{k}$ as
\begin{equation*}
  \begin{aligned}
\overline{I}_{k}&=\mathbb{E}\left[\mathbbm{1}_{T_{k} \leq T} \mathbbm{1}_{\overline{\tau}^{\star}>k-1} \mathbbm{1}_{\tau_{k}<\overline{\tau}_{k}}\right]+\mathbb{E}\left[\mathbbm{1}_{\overline{T}_{k} \leq T} \mathbbm{1}_{\overline{\tau}^{\star}>k-1} \mathbbm{1}_{\tau_{k}>\overline{\tau}_{k}}\right], \\
&=: \overline{I}_{k}^{(1)}+\overline{I}_{k}^{(2)}.
\end{aligned}  
\end{equation*}
\noindent To simplify the presentation, we will focus on one of the two terms in $\overline{I}_{k}$, namely $\overline{I}_{k}^{(1)}$. The analysis of $\overline{I}_{k}^{(2)}$ can be done analogously by exchanging the roles of $\left(\tau_{k}, T_{k}\right)$ and $\left(\overline{\tau}_{k}, \overline{T}_{k}\right)$.\\
Similarly to the previous case, we can rewrite $\overline{I}_{k}^{(1)}$ as
\small
\begin{equation}
\label{I_1}
    \overline{I}_{k}^{(1)}=\sum_{\substack{1 \leq p_{1}<\ldots<p_{k}\\ \alpha_{1}, \ldots, \alpha_{k-1} \in \mathbf{V}}} \mathbb{E}\left[\underbrace{\mathbbm{1}_{\left\{\tau_{i}=\overline{\tau}_{i}=p_{i}, 1 \leq i \leq k-1\right\}} \mathbbm{1}_{\left\{v_{i}=\overline{v}_{i}=\alpha_{i}, 1 \leq i \leq k-1\right\}} \mathbbm{1}_{(T^{*}_{p_{k}}\leq T)}}_{:=P^{I}_k} \mathbbm{1}_{\tau_{k}=p_{k}} \mathbbm{1}_{p_{k}<\overline{\tau}_{k}}\right].
\end{equation}
\noindent Further, we have
\small
\begin{multline*}
    \{\tau_{k}=p_{k}\} \cap\{p_{k}<\overline{\tau}_{k}\} \subseteq \\ \left\{\lambda\bigg( \overline{\phi}\big(T_{p_{k}}^{*}-T_{p_{k-1}}^{*}, (\overline{y}_{k-1},\alpha_{k-1})\bigg), \alpha_{k-1}\big)< U_{p_{k}} \lambda^{*} \leq \lambda\bigg( \phi\big(T_{p_{k}}^{*}-T_{p_{k-1}}^{*}, (y_{k-1},\alpha_{k-1})\big),\alpha_{k-1}\bigg)\right\}.
\end{multline*}
\noindent Let $\mathcal{I}=(\mathcal{U}_i, T_{j}^{*}, \mathcal{V}_q)$ for $1 \leq i \leq p_{k-1}$, $1 \leq j \leq p_{k}$, $1 \leq q \leq k-1$. The vector $\mathcal{I}$ is independent of $\mathcal{U}_{p_k}$. Since the random variable $P^{I}_k$ depends on $\mathcal{I}$, conditioning by $\mathcal{I}$ in \eqref{I_1}, we obtain

\[
\begin{aligned}
&\mathbb{E}\left[P^{I}_k \mathbbm{1}_{\tau_{k}=p_{k}} \mathbbm{1}_{p_{k}<\overline{\tau}_{k}}\right] \leq \\
&\qquad \mathbb{E}\left[P^{I}_k \times \left|\lambda\left( \overline{\phi}\left(T_{p_{k}}^{*}-T_{p_{k-1}}^{*}, (\overline{y}_{k-1},\alpha_{k-1})\right),\alpha_{k-1}\right)-\lambda\left( \phi\left(T_{p_{k}}^{*}-T_{p_{k-1}}^{*},( y_{k-1},\alpha_{k-1})\right),\alpha_{k-1}\right)\right|\right].
\end{aligned}
\]

\noindent Applying the Lipschitz continuity of $\lambda$ and Lemma \ref{betalem_1}, we obtain 
\begin{equation*}
    \overline{I}_{k}^{(1)} \leq C_{2} h \mathbb{E}\left[\mathbbm{1}_{T_{k} \leq T} k\right],
\end{equation*}
 where $C_{2}$ is a constant independent of $h$. A similar bound can be obtained for $\overline{I}_{k}^{(2)}$. Therefore, we have
 \begin{equation*}
    \label{bar_I}
    \sum_{k \geq 1} \overline{I}_{k}=O\left(h\right).
\end{equation*}
\noindent \textit{Step 2:Estimation of $D_2$.}\\

\noindent For $n \geq 0$ we have
\begin{equation*}
    \left\{N_{T}=n\right\} \cap\left\{\min \left(T_{\overline{\tau}^{\star}}, \overline{T}_{\overline{\tau}^{\star}}\right)>T\right\}=\left\{N_{T}=n\right\} \cap\left\{\overline{N}_{T}=\right.n\} \cap\left\{\overline{\tau}^{\star}>n\right\},
\end{equation*}
 where we can interchange $\left\{N_{T}=n\right\}$ and $\left\{\overline{N}_{T}=n\right\}$. Hence, using the partition $\left\{N_{T}=n, n \geq 0\right\}$, we have

\begin{equation*}
   D_2=\sum_{n \geq 0} \mathbb{E}\left[\left|F\left( \overline{\phi}\left(T-T_{n}, (\overline{y}_{n},v_{n})\right),v_{n}\right)-F\left(\phi\left(T-T_{n}, (y_{n},v_{n})\right),v_{n}\right)\right|^{2} \mathbbm{1}_{N_{T}=n} \mathbbm{1}_{\overline{N}_{T}=n} \mathbbm{1}_{\overline{\tau}^{\star}>n}\right]. 
\end{equation*}

\noindent Using Assumption \ref{Lipschitz_H} and Lemma \ref{betalem_1}, we get
\[
\lvert F\left( \overline{\phi}\left(T-T_{n}, (\overline{y}_{n},v_n)\right),v_{n}\right)-F\left( {\phi}\left(T-T_{n}, (y_{n},v_n)\right),v_{n}\right)\rvert ^2\leq C_{F} e^{L T}(n+1) C h .
\]
\noindent Then we have $D_2 \leq C_{3} h \sum_{n \geq 0} \mathbb{E}\left[\mathbbm{1}_{N_{T}=n}(n+1)\right]$, where $C_{3}$ is a constant independent of $h$. Using Assumption \ref{assumptionestimate}, we have 
\begin{equation*}
    \sum_{n \geq 0} \mathbb{E}\left[\mathbbm{1}_{N_{T}=n}(n+1)\right]=\mathbb{E}\left[\left(N_{T}+1\right)\right] \leq \mathbb{E}\left[\left(N_{T}^{*}+1\right)\right]<+\infty.
\end{equation*}
Subsequently, $D_2=O\left(h\right)$.
\end{proof}


\section{Weak error expansion}
\label{section5}
In this section, we aim to derive a weak error expansion for the PDifMP $(x_t)_{t\in[0,T]}$ and its corresponding Euler-Maruyama scheme $(\overline{x}_t)_{t\in[0,T]}$. To this end, we start by setting up the necessary framework.
\subsection{Setting}
\noindent Let $C^{m}(E_1)$, $m\in \mathbb{N}$, denote the space of functions defined on $E_1$ and possessing bounded and continuous derivatives with respect to $y$ of orders up to $m$ and let $C_{0}^{m}(E_1)$ be the space of functions in $C^{m}(E_1)$ with compact support in $E_1$. Further, let $C^{l,m}([0,T]\times E_1)$ represents the set of all real-valued functions defined on $[0,T]\times E_1$ such that they are $l$-times continuously differentiable in $t$ and $m$-times in $y$.\\
Consider the second order operator associated with the process $x_t=(y_t,v_t)$, acting on test functions $g$ of class $C^{2}(E_1)$. For all $x=(y,v)\in E$, $\mathcal{L}$ is given by
\begin{equation}
\label{Extended_gen}
	\mathcal{L}g(y,v)=b(y,v) \partial_y g(y,v)+\frac{1}{2}\sigma^2(y,v)\partial^2_yg(y,v)+\lambda(y,v)\int_{E}\left(g(y,\xi)-g(y,v)\right)\mathcal{Q}((y,v),d\xi).
	\end{equation}
Here, $\partial_y g(y,v)=\FTS{\partial g}{\partial y}(y,v)$ and $\partial^2_y g(y,v)=\FTS{\partial^2 g}{\partial{y^2}}(y,v)$. The detailed derivation of this operator can be found in \cite{bujorianu2006toward, bect2007processus, graham2013stochastic}. Further, for $A\in \mathcal{B}(E)$, we introduce the random counting measure $p(t,A)$ associated to $(x_t)_{t\in [0,T]}$, given by
\begin{equation*}
    p(t, A):= \sum_{n\geq 1} \mathbbm{1}_{(t\geq T_n)}\mathbbm{1}_{(x_{\scalebox{0.6}T_n}\in A)}.
\end{equation*}
Further, let $\Tilde{p}(t,A)$ be the compensator of $p(t,A)$, given by
\begin{equation*}
    \Tilde{p}(t,A)=\int_0^t\lambda(x_s)\mathcal{Q}(x_s,A)ds.
\end{equation*}
Consequently the compensated measure 
\begin{equation*}
q(t,A)=p(t,A)-\Tilde{p}(t,A)  
\end{equation*}
becomes an $\left(\mathcal{F}_t\right)_{t\in[0,T]}$-martingale generated by $p$, see \cite{yin2009hybrid, davis1984piecewise, yin2010properties}. Analogously, we define $\overline{p}, \Tilde{\overline{p}}, \overline{q}$ to be the same objects associated to the approximation $(\overline{x}_t)_{t\in[0,T]}$.\\ 
\noindent The extended generator \eqref{Extended_gen} leads to the generalised It\^o formula, a concept further detailed in \cite{bect2007processus, yin2010properties, graham2013stochastic}.\\
\noindent Before discussing the validity of the It\^o lemma, we first introduce the following notation.
    \begin{align*}
    	R^{(g)}_t:=&\int_0^t\int_E(g(s,\xi)-g(t,x_{s^{-}}))q(ds,d\xi),\\
    	L^{(g)}_t:=&\int_0^t \sigma(x_s) \partial_y g(s,x_s)dW_s,\\
     \overline{R}^{(g)}_t:=&\int_0^t\int_E(g(s,\xi)-g(t,\overline{x}_{s^{-}}))q(ds,d\xi),\\
    	\overline{L}^{(g)}_t:=&\int_0^t \sigma(\overline{x}_s)\partial_y g(s,\overline{x}_s)dW_s.
\end{align*}

\begin{definition}
\label{operators}
We define the following operators acting on functions $g\in C^{1,2}\left([0,T]\times E\right)$
\begin{align*}
\mathcal{T}g(t,x):=&\partial_t g(t,x)+b(x) \partial_yg(t,x)+\frac{1}{2}\sigma^2(x)\partial^2_yg(t,x),\\
\mathcal{R}g(t,x):=&\lambda(x)\int_{E}\big(g(t,\xi)-g(t,x)\big)\mathcal{Q}(x,d\xi),\\
\mathcal{A}g(t,x):=&\mathcal{T}g(t,x)+\mathcal{R}g(t,x),\\
\overline{\mathcal{T}}g(t,x,\xi):=&\partial_t g(t,x)+b(\xi) \partial_yg(t,x)+\frac{1}{2}\sigma^2(\xi)\partial^2_y g(t,x),\\
\overline{\mathcal{A}}g(t,x,\xi):=&\overline{\mathcal{T}}g(t,x,\xi)+\mathcal{R}g(t,x).
\end{align*}
\end{definition}
\noindent Note that for all functions $g\in C^{1,2}\left([0,T]\times E_1\right)$ it holds that $\overline{\mathcal{T}}g(t,x,x)=\mathcal{T}g(t,x)$. This implies that  $\overline{\mathcal{A}}g(t,x,x)=\mathcal{A}g(t,x)$.\\
To present the generalised It\^o formulas for the PDifMP $(x_t)_{t\in[0,T]}$ and its approximation $(\overline{x}_t)_{t\in[0,T]}$, we introduce the following notation.
\begin{notation}
For all $s\in [0,T]$, let $\overline{\eta}_s:= (\overline{T}_n+kh)\mathbbm{1}_{s\in \left[\overline{T}_n+kh,\, (\overline{T}_n+(k+1)h)\wedge \overline{T}_{n+1}\right)}$, where $n$ is a non-negative integer and $k\in \{ 0, \ldots \lfloor (\overline{T}_{n+1}-\overline{T}_n)/h\rfloor \}$.
\end{notation}
\begin{theorem}[Generalised It\^o formula]
Let $(x_t)_{t\in[0,T]}$ and $(\overline{x}_t)_{t\in[0,T]}$ be a PDifMP and its approximation respectively, with ${x_0=\overline{x}_0=x}$, for some $x\in E$. For all bounded functions $g\in C^{2}_0\left(E_1 \right)$ with bounded derivatives, the following hold
\begin{equation}
\label{Ito_pdif}
    g(t,x_t)-g(0,x)=\int_0^t \mathcal{A}g(s,x_s)ds +M^g_t, 
\end{equation}
where $M^g_t=R^{(g)}_t+L^{(g)}_t$ is a martingale, and
\begin{equation}
\label{Ito_bar}
    g(t,\overline{x}_t)-g(0,x)=\int_0^t \overline{\mathcal{A}}g(s,\overline{x}_s,\overline{x}_{\overline{\eta}_s})ds +\overline{M}^g_t, 
\end{equation}
where $\overline{M}^g_t=\overline{R}^{(g)}_t+\overline{L}^{(g)}_t$ is a martingale.
\end{theorem}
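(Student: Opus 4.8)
The plan is to establish both It\^o-type formulas by the same route: first for the continuous evolution between jumps, where the process solves an SDE and the classical It\^o formula applies, and then splice across the jump times, where the contribution is purely a pure-jump term that is captured by the random measures $p$ (resp.\ $\overline p$). I would work on the event $\{N_T = n\}$ (resp.\ $\{\overline N_T = n\}$), decompose the interval $[0,t]$ at the jump times $T_0=0<T_1<\dots<T_{N_t}\le t$, and sum the increments of $s\mapsto g(s,x_s)$.

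\textbf{Step 1: the exact process.} On each interval $[T_{i},T_{i+1})$ the path $x_s=(\phi(s-T_i,x_i),v_i)$ is a diffusion with fixed parameter $v_i$, so the standard It\^o formula applied to $g(s,\cdot)$ gives, for $s\in[T_i,T_{i+1})$,
\begin{equation*}
g(s,x_s)-g(T_i,x_{T_i}) = \int_{T_i}^{s}\Big(\partial_t g + b\,\partial_y g + \tfrac12\sigma^2\partial_y^2 g\Big)(r,x_r)\,dr + \int_{T_i}^{s}\sigma(x_r)\partial_y g(r,x_r)\,dW_r .
\end{equation*}
At each jump time $T_{i+1}$ the continuous coordinate is unchanged (Assumption \ref{Trans_Assump}(b)), so the jump increment is $g(T_{i+1},x_{T_{i+1}})-g(T_{i+1},x_{T_{i+1}^-})=\int_E\big(g(T_{i+1},\xi)-g(T_{i+1},x_{T_{i+1}^-})\big)\,p(\{T_{i+1}\},d\xi)$. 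Summing over $i$ and telescoping recovers $\int_0^t \mathcal{T}g(s,x_s)\,ds$ for the diffusion part, $\int_0^t\sigma(x_s)\partial_y g(s,x_s)\,dW_s = L^{(g)}_t$ for the stochastic integral, and $\int_0^t\int_E\big(g(s,\xi)-g(s,x_{s^-})\big)p(ds,d\xi)$ for the jumps. Now add and subtract the compensator $\tilde p$: using $\tilde p(ds,d\xi)=\lambda(x_s)\mathcal Q(x_s,d\xi)\,ds$, the compensator term is exactly $\int_0^t\mathcal{R}g(s,x_s)\,ds$, and the remaining $\int_0^t\int_E(\cdots)q(ds,d\xi)=R^{(g)}_t$. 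This yields \eqref{Ito_pdif}; the martingale property of $M^g_t=R^{(g)}_t+L^{(g)}_t$ follows because $L^{(g)}$ is a true martingale (bounded integrand $\sigma\partial_y g$, finite horizon, via Lemma \ref{est_1}) and $R^{(g)}$ is the integral against the compensated jump measure $q$, which is an $(\mathcal F_t)$-martingale as recalled in the text.

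\textbf{Step 2: the approximation.} The argument is identical in structure, but the continuous coordinate of $\overline x_s$ between consecutive discretisation points $\overline\eta_s$ evolves, per \eqref{varphi_bar}, with drift $b(\overline x_{\overline\eta_s})$ and diffusion $\sigma(\overline x_{\overline\eta_s})$ held \emph{constant} over $[\overline\eta_s,\overline\eta_s+h)$ — these are frozen at the left grid point rather than evaluated at the running state. Applying It\^o's formula to $g(s,\cdot)$ over each such subinterval produces precisely $\overline{\mathcal T}g(s,\overline x_s,\overline x_{\overline\eta_s})$ in the drift (because the $\partial_y g$ and $\partial_y^2 g$ are evaluated at the true running state $\overline x_s$ while the coefficients carry the argument $\overline x_{\overline\eta_s}$) and $\sigma(\overline x_{\overline\eta_s})\partial_y g(s,\overline x_s)$ in the stochastic integral, i.e.\ $\overline L^{(g)}_t$. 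The jump terms, compensated by $\tilde{\overline p}$, give $\int_0^t\mathcal{R}g(s,\overline x_s)\,ds + \overline R^{(g)}_t$, so collecting everything produces \eqref{Ito_bar} with $\overline{\mathcal A} = \overline{\mathcal T} + \mathcal R$, and $\overline M^g_t = \overline R^{(g)}_t + \overline L^{(g)}_t$ is a martingale for the same reasons. Note the consistency remark $\overline{\mathcal T}g(t,x,x)=\mathcal T g(t,x)$ makes \eqref{Ito_bar} reduce to \eqref{Ito_pdif} when $h\to 0$.

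\textbf{Anticipated obstacle.} The routine parts are the two It\^o applications; the delicate point is the bookkeeping at the jump times combined with the discretisation grid for $\overline x$ — one must be careful that a jump time $\overline T_{n+1}$ truncates the current grid cell (this is what the $\wedge\,\overline T_{n+1}$ in the definition of $\overline\eta_s$ encodes), and that summing increments of $g(s,\overline x_s)$ across both the deterministic grid points (where $g$ and $\overline x$ are continuous, so nothing is contributed beyond the integral terms) and the random jump points correctly reassembles the integral against $\overline p$. A second point needing care is justifying that $R^{(g)}_t$ and $\overline R^{(g)}_t$ are genuine martingales and not merely local martingales: this uses boundedness of $g$, Assumption \ref{assumptionestimate} (so $\mathbb E[N_T]\le\mathbb E[N^*_T]<\infty$, controlling the total jump activity), giving an $L^1$ bound on the total variation of the jump part, hence integrability. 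I would also remark that the compact support / boundedness of $g$ and its derivatives is what makes all the drift integrands bounded and the stochastic integrals square-integrable, so every interchange of summation and expectation is legitimate.
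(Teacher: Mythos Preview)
Your proposal is correct and follows essentially the same route as the paper: decompose $[0,t]$ at the jump times on the event $\{N_t=n\}$ (resp.\ $\{\overline N_t=n\}$), apply the classical It\^o formula on each inter-jump piece (with coefficients frozen at $\overline x_{\overline\eta_s}$ for the approximation, which is exactly why $\overline{\mathcal T}$ appears), and handle the jumps by adding and subtracting the compensator to produce the $\mathcal R g$ term and the compensated-measure martingale $R^{(g)}$. Your treatment is in fact more explicit than the paper's sketch---particularly on the martingale justification for $R^{(g)}$ via $\mathbb E[N_T]<\infty$ and on the grid-truncation bookkeeping at $\overline T_{n+1}$---and you correctly identify that the stochastic integral for the approximation carries $\sigma(\overline x_{\overline\eta_s})$ rather than $\sigma(\overline x_s)$.
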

\begin{proof}
The proof for \eqref{Ito_pdif} can be found in \cite{bect2007processus,davis1984piecewise}. To prove the result for \eqref{Ito_bar} we proceed analogously. Let

\begin{equation*}
    \overline{M}_{t}^{g}=\sum_{k \geq 1} \mathbbm{1}_{t \geq \overline{T}_{k}}\left(g\left(\overline{T}_{k}, \overline{x}_{\overline{T}_{k}}\right)-g\left(\overline{T}_{k}, \overline{x}_{\overline{T}_{k}^{-}}\right)\right)-\int_{0}^{t} \mathcal{R} g\left(s, \overline{x}_{s}\right) ds.
\end{equation*}

\noindent As in \cite{davis1984piecewise}, on the event $\left\{\overline{N}_{t}=n\right\}$, we have

\begin{equation}
\label{ito_pr}
\footnotesize{
 \begin{aligned}
&\sum_{k \geq 1} \mathbbm{1}_{t\geq\overline{T}_{k}}\left(g\left(\overline{T}_{k}, \overline{x}_{\overline{T}_{k}}\right)-g\left(\overline{T}_{k}, \overline{x}_{\overline{T}_{k}^{-}}\right)\right) 
=\left[\sum_{k=0}^{n-1} g\left(\overline{T}_{k+1}, \overline{x}_{\overline{T}_{k+1}}\right)-g\left(\overline{T}_{k}, \overline{x}_{\overline{T}_{k}}\right)+g\left(t, \overline{x}_{t}\right)-g\left(\overline{T}_{n}, \overline{x}_{\overline{T}_{n}}\right)\right]\\&\hspace{6cm}-\left[\sum_{k=0}^{n-1} g\left(\overline{T}_{k+1}, \overline{x}_{\overline{T}_{k+1}^{-}}\right)-g\left(\overline{T}_{k}, \overline{x}_{\overline{T}_{k}}\right)+g\left(t, \overline{x}_{t}\right)-g\left(\overline{T}_{n}, \overline{x}_{\overline{T}_{n}}\right)\right].
\end{aligned}} 
\normalsize
\end{equation}

\noindent The first term on the r.h.s is equal to $g\left(t, \overline{x}_{t}\right)-g\left(0, x\right)$. As for the second term, we follow the approach employed in \cite{lemaire2020thinning} and decompose the increment $g\left(\overline{T}_{k+1}, \overline{x}_{\overline{T}_{k+1}^{-}}\right)-g\left(\overline{T}_{k}, \overline{x}_{\overline{T}_{k}}\right)$ into a sum of increments over the intervals ${\left[\overline{T}_{k}+i h,\left(\overline{T}_{k}+(i+1) h\right) \wedge \overline{T}_{k+1}\right] \subset\left[\overline{T}_{k}, \overline{T}_{k+1}\right]}$, for all $k \leq n-1$.\\
\noindent To simplify the discussion, we focus on increments of the form
\begin{equation*}
    g\big(t, \overline{\phi}(t, (y,v)),v\big)-g\big(i h, (\overline{y}_{i},v)\big), \qquad \forall \, x=(y,v)\in E, \quad t \in[i h,(i+1) h],
\end{equation*}
\noindent for some $i \geq 0,$ where $\overline{\phi}$ is defined in $\eqref{varphi_bar}$. Then, for $g\in C^{1,2}\left( [0,T]\times E_1 \right)$, we write
\footnotesize{
\begin{equation*}
 g\left(t, \overline{\phi}(t, (y,v)),v\right)-g\left(i h, (\overline{y}_{i},v),v\right)=\int_{i h}^{t}\left(\partial_{t} g+b\left(\overline{y}_{i},v\right) \partial_{y} g+\frac{1}{2}\sigma^2\left(\overline{y}_{i},v\right)\partial^2_{y}g\right)\left(s,\overline{\phi}\big(s, (y,v)\big),v\right) ds.
\end{equation*}}
\normalsize
\noindent Then, the above arguments together with Definition \ref{operators} allow us to express the second term on the r.h.s of \eqref{ito_pr} as
\begin{equation*}
\sum_{k=0}^{n-1} g\left(\overline{T}_{k+1}, \overline{x}_{\overline{T}_{k+1}^{-}}\right)-g\left(\overline{T}_{k}, \overline{x}_{\overline{T}_{k}}\right)+g\left(t, \overline{x}_{t}\right)-g\left(\overline{T}_{n}, \overline{x}_{\overline{T}_{n}}\right)=\int_{0}^{t} \overline{\mathcal{T}} g\left(s, \overline{x}_{s}, \overline{x}_{\overline{\eta}_s}\right) d s .    
\end{equation*}
\end{proof}

\subsection{Weak error analysis}
Consider the operator $\mathcal{A}$ defined in Definition \ref{operators} and the parabolic partial differential equation (PDE) with a terminal condition 
\begin{equation}
   \label{integro}
        \left\{
    \begin{array}{ll}
\mathcal{A}u(t,x)= 0,& \quad (t,x)\in[0,T)\times E,\\
 u(T,x) = F(x), &\quad x\in E.
\end{array}
\right. 
\end{equation}

\noindent Under the following assumptions the problem \eqref{integro} has a unique solution which is continuous on $[0,T]\times E$ and belongs to the space $C^{1,2}([0,T)\times E_1)$.\\
Let $x=(y,v)\in E$ and let $D^{\alpha}_y$ be a vector representing the collection of all partial derivatives of order up to $\alpha$, where $\alpha \leq m$, $m\mathbb{N}$, given by
\begin{equation*}
    D^{\alpha}_y=\FTS{\partial^{\alpha}}{\partial y^{\alpha}}.
\end{equation*}
\begin{assumption}
\label{reg_assump}
Let $x=(y,v)\in E$.
\begin{enumerate}
    \item The function $F(x)\in C^{2}(E_1)$ and is bounded. Further, the growth of $F$ is at most polynomial.
    \item For all $v \in \mathbf{V}$ and for all $B\in \mathcal{B}(\mathbf{V})$, the functions $y\mapsto Q((y,v),B)$, $y\mapsto \lambda (y,v)$, ${y\mapsto b(y,v)}$ and $y\mapsto \sigma (y,v)$ are all bounded and twice continuously differentiable with bounded derivatives.
\end{enumerate}

\end{assumption}

\begin{assumption}
 For each $v\in \mathbf{V}$, the functions $b$ and $\sigma$ are both twice continuously differentiable with respect to $y$ and that they satisfy Theorem \ref{existence}. Further, assume that there exist two constants $K$ and $r\in \mathbb{N}$, such that
     \begin{equation*}
        \vert D^{\alpha}_yb(y,v)\vert +  \vert D^{\alpha}_y\sigma(y,v)\vert \leq K(1+\vert y \vert^{r}), \quad \forall y\in E_1 \quad \text{and} \quad \alpha \in \{0,\ldots 4\}.
    \end{equation*}
    \label{Diff_ass}
\end{assumption}
\begin{theorem}[Feynman-Kac formula]
\label{Fey_Kac}
Let $x=(y,v)\in E$. Suppose that Assumptions \ref{reg_assump} and \ref{Diff_ass} are satisfied. Suppose also that the PDE \eqref{integro} admits a unique solution $u(t,x)$ of class ${C^{1,2}([0,T)\times E_1)}$. Additionally, if the mapping $y\mapsto u(t, (y,v))$ is bounded and its partial derivatives of all orders are bounded and uniformly Lipschitz continuous in $v$. Then, $u(t,x)$ is given by
\begin{equation}
\label{Feynman}
    u(t,x)=\mathbb{E}[F(x_T)\vert x_t=x], \quad \forall \, t\in [0,T], \, x\in E.
\end{equation}
\end{theorem}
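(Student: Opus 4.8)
The plan is to establish the Feynman--Kac representation \eqref{Feynman} by showing that the process $M_t := u(t, x_t)$ is a martingale under the stated regularity assumptions, and then to evaluate the martingale property between times $t$ and $T$. First I would verify that the hypotheses of the generalised It\^o formula \eqref{Ito_pdif} apply to the solution $u$ of \eqref{integro}: by Assumption \ref{reg_assump} and the assumed regularity of $u$, the map $y \mapsto u(s,(y,v))$ is in $C^{1,2}([0,T)\times E_1)$ with bounded derivatives, so (modulo a standard localisation/truncation argument to handle the terminal time $T$ and the non-compact support, using a sequence of cutoff functions $\chi_R \in C_0^2(E_1)$ and the non-explosion of $(y_t)$) we may apply It\^o to $u(s,x_s)$ on $[t,T]$. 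This gives
\begin{equation*}
 u(T, x_T) - u(t, x_t) = \int_t^T \mathcal{A}u(s, x_s)\, ds + \left( M^u_T - M^u_t \right),
\end{equation*}
where $M^u$ is the martingale $R^{(u)} + L^{(u)}$ from the statement of the generalised It\^o formula.

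Next I would invoke the PDE \eqref{integro}: since $\mathcal{A}u(s,x) = 0$ for $(s,x)\in [0,T)\times E$, the Lebesgue integral on the right-hand side vanishes, leaving $u(T,x_T) - u(t,x_t) = M^u_T - M^u_t$. Using the terminal condition $u(T,\cdot) = F(\cdot)$ and taking conditional expectation given $\mathcal{F}_t$ (equivalently, given $x_t = x$, using the Markov property of the PDifMP), the martingale increment has zero conditional mean:
\begin{equation*}
 \mathbb{E}\big[ F(x_T) \,\big|\, x_t = x \big] - u(t,x) = \mathbb{E}\big[ M^u_T - M^u_t \,\big|\, \mathcal{F}_t \big] = 0.
\end{equation*}
This yields \eqref{Feynman}. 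To make the conditional expectation rigorous I would check that $M^u$ is genuinely a (true, not merely local) martingale: boundedness of $u$ and of $\sigma\,\partial_y u$ together with Assumption \ref{assumptionestimate} (finiteness of $\mathbb{E}[N_T^*]$, controlling the jump part $R^{(u)}$) and the linear growth / moment bounds from Lemma \ref{est_1} give the required integrability so that $\mathbb{E}[\sup_{t\le T}|M^u_t|] < \infty$.

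The main obstacle I anticipate is the localisation argument needed to legitimately apply the It\^o formula of the preceding theorem, which as stated requires $g \in C_0^2(E_1)$ (compact support) and involves the generator $\mathcal{A}$ acting at interior times, whereas here $u$ is defined up to the terminal time $T$ where its time-regularity may degrade, and $u$ need not have compact support (only bounded, with bounded derivatives). I would handle this by working on $[t, T-\varepsilon]$, multiplying $u$ by a smooth spatial cutoff $\chi_R$ supported on $\{|y|\le R+1\}$ and equal to $1$ on $\{|y|\le R\}$, applying It\^o to $\chi_R u$, then letting $R\to\infty$ using the non-explosion of the continuous component and the moment estimates of Lemma \ref{est_1} to control the error terms (which are supported on $\{|y_s| > R\}$ for some $s$), and finally letting $\varepsilon \to 0$ using continuity of $u$ on $[0,T]\times E$ and dominated convergence (justified by boundedness of $u$ and $F$). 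The rest of the argument is then a routine consequence of the PDE and the martingale property.
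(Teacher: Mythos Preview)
Your argument is the standard and correct route to the Feynman--Kac representation: apply the generalised It\^o formula to $u(s,x_s)$, use $\mathcal{A}u=0$ to kill the drift integral, invoke the terminal condition $u(T,\cdot)=F$, and take conditional expectations using the martingale property of $M^u$. You also correctly flag the localisation issue arising from the compact-support hypothesis in the It\^o formula as stated.

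Note, however, that the paper does not actually carry out any proof of this theorem: it simply writes ``We refer to \cite{yin2010properties} for the proof'' and moves on. So there is nothing in the paper to compare your argument against beyond that citation. Your sketch is precisely the approach one expects the cited reference to take, and it is sound.
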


\begin{proof}
We refer to \cite{yin2010properties} for the proof.
\end{proof}

\begin{assumption}
\label{reg_assump2}
Let $x=(y,v)\in E$.
\begin{enumerate}
    \item The functions $b$ and $\sigma$ are of class $C^{\infty}(E_1)$, and their partial derivatives of all orders are bounded. 
    \item The function $F$ is of class $C^{\infty}(E_1)$ and for any $\alpha \leq m$, $m\in \mathbb{N}$, there exists an integer $r$ and a positive constant $K$ such that
\begin{equation*}
        \vert D^{\alpha}_yF(y,v)\vert \leq K(1+\vert y \vert^{r}), \quad \forall y\in E_1 \, \quad \text{and} \quad \alpha \in \{0,\ldots 4\}.
    \end{equation*}
\end{enumerate}

\end{assumption}

\begin{theorem}
\label{Weak_conv}
For $t \in[0,T]$, let $\left(x_{t}\right)$ be a PDifMP and $\left(\overline{x}_{t}\right)$ its approximation with ${x_{0}=\overline{x}_{0}=x}$ for some $x \in E$. Assume the conditions of Theorem \ref{Fey_Kac}. Under Assumptions \ref{reg_assump2} for any $F: E \rightarrow \mathbb{R}$ the discretisation error of the Euler-Maruyama scheme satisfies
\begin{equation*}
  \mathbb{E}\left[F\left(\overline{x}_{T}\right)\right]-\mathbb{E}\left[F\left(x_{T}\right)\right]=\frac{h}{2}  \mathbb{E}\left[\int_{0}^{T} \gamma\left(s, x_{s}\right) d s\right] +O\left(h^{2}\right),
\end{equation*}
\noindent where

\begin{equation*}
  \gamma\left(t,\,\cdot\,\right)=\left(  \overline{\mathcal{A}}(\partial_tu)+b\overline{\mathcal{A}}(\partial_yu)+\frac{1}{2}\sigma^2\overline{\mathcal{A}}(\partial_y^2u)+\overline{\mathcal{A}}(\mathcal{R}u)\right)(t,\,\cdot\,).
\end{equation*}

\end{theorem}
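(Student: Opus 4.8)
The plan is to follow the classical Talay--Tubaro weak-expansion strategy, adapted to the PDifMP setting via the Feynman--Kac representation of Theorem \ref{Fey_Kac} and the two generalised It\^o formulas \eqref{Ito_pdif}--\eqref{Ito_bar}. Since $u$ solves \eqref{integro} with terminal data $F$, we have $u(T,x)=F(x)$ and, by Theorem \ref{Fey_Kac}, $u(0,x)=\mathbb{E}[F(x_T)]$. Hence $\mathbb{E}[F(\overline{x}_T)]-\mathbb{E}[F(x_T)]=\mathbb{E}[u(T,\overline{x}_T)]-u(0,x)$. First I would apply the generalised It\^o formula \eqref{Ito_bar} to $g=u\in C^{1,2}([0,T]\times E_1)$ along the approximating process: taking expectations kills the martingale part $\overline{M}^u_t$, leaving
\begin{equation*}
\mathbb{E}[u(T,\overline{x}_T)]-u(0,x)=\mathbb{E}\!\left[\int_0^T \overline{\mathcal{A}}u(s,\overline{x}_s,\overline{x}_{\overline{\eta}_s})\,ds\right].
\end{equation*}
Because $u$ solves $\mathcal{A}u=0$ and $\overline{\mathcal{A}}g(s,x,x)=\mathcal{A}g(s,x)=0$, the integrand vanishes whenever $\overline{x}_{\overline{\eta}_s}=\overline{x}_s$; the whole error is therefore driven by the discrepancy between the coefficients frozen at the grid point $\overline{\eta}_s$ and their current values. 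Concretely,
\begin{equation*}
\overline{\mathcal{A}}u(s,\overline{x}_s,\overline{x}_{\overline{\eta}_s})-\mathcal{A}u(s,\overline{x}_s)=\big(b(\overline{x}_{\overline{\eta}_s})-b(\overline{x}_s)\big)\partial_y u(s,\overline{x}_s)+\tfrac12\big(\sigma^2(\overline{x}_{\overline{\eta}_s})-\sigma^2(\overline{x}_s)\big)\partial_y^2 u(s,\overline{x}_s),
\end{equation*}
so $\mathbb{E}[F(\overline{x}_T)]-\mathbb{E}[F(x_T)]=\mathbb{E}\big[\int_0^T (\overline{\mathcal{A}}u(s,\overline{x}_s,\overline{x}_{\overline{\eta}_s})-\mathcal{A}u(s,\overline{x}_s))\,ds\big]$.

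The next step is a one-term Taylor/It\^o expansion of this difference over each micro-interval $[\overline{\eta}_s,s]$. On such an interval the approximating flow evolves as $\overline{x}_s=\overline{x}_{\overline{\eta}_s}+b(\overline{x}_{\overline{\eta}_s})(s-\overline{\eta}_s)+\sigma(\overline{x}_{\overline{\eta}_s})(W_s-W_{\overline{\eta}_s})$ (the jump component being frozen between jump times), so applying It\^o's formula to the smooth functions $x\mapsto b(x)\partial_y u(s,x)$ and $x\mapsto \tfrac12\sigma^2(x)\partial_y^2 u(s,x)$ between $\overline{\eta}_s$ and $s$, and then taking expectations, the leading contribution comes from the drift term $(s-\overline{\eta}_s)\,\mathcal{T}(\,\cdot\,)$ acting on these functions together with the $\partial_s$ derivative, with the martingale increments again averaging to zero. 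Collecting $\partial_s u$, $\partial_y u$, $\partial_y^2 u$ and the jump operator $\mathcal{R}u$ one identifies the coefficient of the $O(s-\overline{\eta}_s)$ term as exactly $\big(\overline{\mathcal{A}}(\partial_t u)+b\,\overline{\mathcal{A}}(\partial_y u)+\tfrac12\sigma^2\overline{\mathcal{A}}(\partial_y^2 u)+\overline{\mathcal{A}}(\mathcal{R}u)\big)=\gamma$; one must also invoke the PDE $\mathcal{A}u=0$ (equivalently $\partial_t u=-b\partial_y u-\tfrac12\sigma^2\partial_y^2 u-\mathcal{R}u$) and differentiate it in $t$ to eliminate spurious terms, exactly as in the diffusion case. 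Since $\mathbb{E}[s-\overline{\eta}_s]=h/2$ up to boundary effects at jump times, summing over micro-intervals yields $\frac{h}{2}\mathbb{E}[\int_0^T\gamma(s,x_s)\,ds]$ as the leading term, after a final replacement of $\overline{x}_s$ by $x_s$ inside the integral (legitimate to the stated order by the mean-square estimate of Theorem \ref{Strong_thm} and the smoothness of $\gamma$).

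The remainder is $O(h^2)$: each micro-interval contributes a second-order Taylor/It\^o remainder of size $O(h^2)$ in expectation — here I would use the polynomial-growth bounds on the derivatives of $b,\sigma,F$ from Assumptions \ref{reg_assump2} and \ref{Diff_ass}, the moment bounds of Lemma \ref{est_1}, and the boundedness of the derivatives of $u$ from Theorem \ref{Fey_Kac} — and there are $O(1/h)$ such intervals per unit time, with the total number of intervals controlled in expectation by $\mathbb{E}[\overline N_T]\le\mathbb{E}[N^*_T]<\infty$ (Assumption \ref{assumptionestimate}). I expect the main obstacle to be the bookkeeping at the jump times: the micro-intervals adjacent to a jump are truncated, $(\overline T_{k+1}\wedge T)-\overline\eta_s$ need not equal $h$, and the post-jump reset changes the frozen coefficients; one must check that these truncation and jump effects contribute only $O(h^2)$ overall, which requires combining the Lipschitz/boundedness hypotheses on $\lambda$ and $\mathcal{Q}$ (Assumption \ref{MSE_Ass1}, Theorem \ref{Fey_Kac}) with the finiteness of the expected number of jumps. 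A secondary technical point is justifying the interchange of expectation, the infinite sum over jump indices, and the time integral, which again rests on the uniform-in-$h$ moment bounds together with Assumption \ref{assumptionestimate}.
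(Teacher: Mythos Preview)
Your proposal is correct and follows the same Talay--Tubaro strategy as the paper: Feynman--Kac to identify $u(0,x)=\mathbb{E}[F(x_T)]$, the generalised It\^o formula \eqref{Ito_bar} along $\overline{x}$ to produce $\mathbb{E}[\int_0^T\overline{\mathcal{A}}u(s,\overline{x}_s,\overline{x}_{\overline{\eta}_s})\,ds]$, a second micro-interval expansion to extract the leading $(h/2)\gamma$ term, and finally the replacement $\overline{x}\to x$ inside the integral at cost $O(h)$.

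The one genuine organisational difference is \emph{where} you subtract $\mathcal{A}u=0$. You subtract it at the current point $(s,\overline{x}_s)$, which collapses the integrand to the coefficient mismatches $(b(\overline{x}_{\overline{\eta}_s})-b(\overline{x}_s))\partial_y u+\tfrac12(\sigma^2(\overline{x}_{\overline{\eta}_s})-\sigma^2(\overline{x}_s))\partial_y^2 u$ and then expands those via It\^o. The paper instead applies \eqref{Ito_bar} a second time directly to each of the four terms $\partial_t u,\; b(\overline{x}_{\overline{\eta}_s})\partial_y u,\; \tfrac12\sigma^2(\overline{x}_{\overline{\eta}_s})\partial_y^2 u,\; \mathcal{R}u$ over $[\overline{\eta}_s,s]$, and only then uses that their sum at the \emph{grid} point equals $\mathcal{A}u(\overline{\eta}_s,\overline{x}_{\overline{\eta}_s})=0$. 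The advantage of the paper's route is that $\gamma$ drops out immediately in the stated form $\overline{\mathcal{A}}(\partial_t u)+b\,\overline{\mathcal{A}}(\partial_y u)+\tfrac12\sigma^2\overline{\mathcal{A}}(\partial_y^2 u)+\overline{\mathcal{A}}(\mathcal{R}u)$ with no further algebra; your route naturally yields the classical diffusion-style coefficient (derivatives of $b$ and $\sigma^2$), and you correctly note that one must then invoke and differentiate $\mathcal{A}u=0$ to recast it as the stated $\gamma$. For the final replacement $\overline{x}\to x$, the paper does not invoke Theorem~\ref{Strong_thm} as a black box but reuses its coupling decomposition explicitly: split on $\{\min(T_{\overline{\tau}^\star},\overline{T}_{\overline{\tau}^\star})\le T\}$ (probability $O(h)$ by Step~1 of that proof) and on the complement use that the discrete components coincide, together with Lemma~\ref{betalem_1} and the Lipschitz property of $\gamma$. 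This is slightly sharper than the path-wise $L^2$ bound you gesture at, though the spirit is the same.
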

\begin{proof}
The proof of Theorem \ref{Weak_conv} is inspired from \cite{lemaire2020thinning, graham2013stochastic, pages2018numerical, talay1990expansion} and it involves two primary steps. First derive a representation of the weak error. Then, we use this representation to identify $\gamma(t,\cdot)$.\\

\noindent \textit{Step 1: Representing} $\mathbb{E}[F(\overline{x}_T)-F(x_t)]$.\\

\noindent It follows from the Feynman-Kac formula (\ref{Feynman}), the terminal condition $u(T,x)=F(x)$ and the application of the generalised It\^o formula (\ref{Ito_bar}) to $u$ at time $T$ that
\begin{equation*}
    u\left(T, \overline{x}_{T}\right)=u(0, x)+\int_{0}^{T} \overline{\mathcal{A}} u\left(s, \overline{x}_{s}, \overline{x}_{\overline{\eta}_s}\right) d s+\overline{M}_{T}^{u},
\end{equation*}
where $\overline{M}_{T}^{u}=\overline{R}_{T}^{u}+\overline{L}_{T}^{u}$. Since $\overline{M}_{T}^{u}$ is an $\mathcal{F}_t$-martingale, it has zero expectation, \cite{pages2018numerical,baran2013feynman}. Thus,
\begin{equation*}
    \mathbb{E}\left[u\left(T, \overline{x}_{T}\right)-u(0, x)\right]=\mathbb{E}\left[\int_{0}^{T} \overline{\mathcal{A}} u\left(s, \overline{x}_{s}, \overline{x}_{\overline{\eta}_s}\right) d s\right],
\end{equation*}
where
\begin{equation}
\label{A_bar}
   \overline{\mathcal{A}} u\left(s, \overline{x}_{s}, \overline{x}_{\overline{\eta}_s} \right)=\partial_t u(s,\overline{x}_{s})+b(\overline{x}_{\overline{\eta}_s}) \partial_yu(s,\overline{x}_{s})+\frac{1}{2}\sigma^2(\overline{x}_{\overline{\eta}_s})\partial^2_yu(s,\overline{x}_{s})+\mathcal{R}u(s,\overline{x}_s).
\end{equation}
Given the regularity conditions on $\lambda,\, \mathcal{Q}$ and $u$ (see Assumptions \ref{reg_assump}-\ref{reg_assump2} and Theorem \ref{Fey_Kac}), the functions $\partial_{t} u, \, \partial_{y} u$ and $\mathcal{R} u$ are smooth enough to apply the It\^o formula \eqref{Ito_bar} between $\overline{\eta}_s$ and $s$, respectively, \cite{lemaire2020thinning}. We investigate each term separately
 \begin{equation*}
     \begin{gathered}
\partial_{t} u\left(s, \overline{x}_{s}\right)=\partial_{t} u\left(\overline{\eta}_s, \overline{x}_{\overline{\eta}_s}\right)+\int_{\overline{\eta}_s}^{s} \overline{\mathcal{A}}\left(\partial_{t} u\right)\left(r, \overline{x}_{r}, \overline{x}_{\overline{\eta}_r}\right) d r+\overline{M}_{s}^{\partial_{t} u}-\overline{M}_{\overline{\eta}_s}^{\partial_{t} u}, \\
\partial_{y} u\left(s, \overline{x}_{s}\right)=\partial_{y} u\left(\overline{\eta}_s, \overline{x}_{\overline{\eta}_s}\right)+\int_{\overline{\eta}_s}^{s} \overline{\mathcal{A}}\left(\partial_{y} u\right)\left(r, \overline{x}_{r}, \overline{x}_{\overline{\eta}_r}\right) d r+\overline{M}_{s}^{\partial_{y} u}-\overline{M}_{\overline{\eta}_s}^{\partial_{y} u}, \\
\partial_{y}^2 u\left(s, \overline{x}_{s}\right)=\partial_{y}^2 u\left(\overline{\eta}_s, \overline{x}_{\overline{\eta}_s}\right)+\int_{\overline{\eta}_s}^{s} \overline{\mathcal{A}}\left(\partial_{y}^2 u\right)\left(r, \overline{x}_{r}, \overline{x}_{\overline{\eta}_r}\right) d r+\overline{M}_{s}^{\partial_{y}^2 u}-\overline{M}_{\overline{\eta}_s}^{\partial_{y}^2 u}, \\
\mathcal{R} u\left(s, \overline{x}_{s}\right)=\mathcal{R} u\left(\overline{\eta}_s, \overline{x}_{\overline{\eta}_s}\right)+\int_{\overline{\eta}_s}^{s} \overline{\mathcal{A}}(\mathcal{R} u)\left(r, \overline{x}_{r}, \overline{x}_{\overline{\eta}_r}\right) d r+\overline{M}_{s}^{\mathcal{R} u}-\overline{M}_{\overline{\eta}_s}^{\mathcal{R} u}.
\end{gathered}
 \end{equation*}
Further, given that $\overline{\eta}_r=\overline{\eta}_s$ for any $r\in [\overline{\eta}_s,s]$, we have

\begin{multline*}
    b(\overline{x}_{\overline{\eta}_s})\partial_{y} u\left(s, \overline{x}_{s}\right)=b(\overline{x}_{\overline{\eta}_s})\partial_{y} u\left(\overline{\eta}_s, \overline{x}_{\overline{\eta}_s}\right)+\int_{\overline{\eta}_s}^{s}b(\overline{x}_{\overline{\eta}_r}) \overline{\mathcal{A}}\left(\partial_{y} u\right)\left(r, \overline{x}_{r}, \overline{x}_{\overline{\eta}_r}\right) d r\\+b(\overline{x}_{\overline{\eta}_s})\left(\overline{M}_{s}^{\partial_{y} u}-\overline{M}_{\overline{\eta}_s}^{\partial_{y} u}\right),
\end{multline*}
and
\begin{multline*}
    \frac{1}{2}\sigma^2\partial_{y}^2(\overline{x}_{\overline{\eta}_s}) u\left(s, \overline{x}_{s}\right)=\frac{1}{2}\sigma^2\partial_{y}^2 u\left(\overline{\eta}_s, \overline{x}_{\overline{\eta}_s}\right)+\frac{1}{2}\int_{\overline{\eta}_s}^{s}\sigma^2 (\overline{x}_{\overline{\eta}_r}) \overline{\mathcal{A}}\left(\partial_{y}^2 u\right)\left(r, \overline{x}_{r}, \overline{x}_{\overline{\eta}_r}\right) d r\\+\frac{1}{2}\sigma^2 (\overline{x}_{\overline{\eta}_s})\left(\overline{M}_{s}^{\partial_{y}^2 u}-\overline{M}_{\overline{\eta}_s}^{\partial_{y}^2 u}\right),
\end{multline*}
so that (\ref{A_bar}) is written as

\begin{multline}
\label{A_bar_new}
\overline{\mathcal{A}} u\left(s, \overline{x}_{s}, \overline{x}_{\overline{\eta}_s}\right) =\overline{\mathcal{A}} u\left(\overline{\eta}_s, \overline{x}_{\overline{\eta}_s}, \overline{x}_{\overline{\eta}_s}\right)+\int_{\overline{\eta}_s}^{s} \Gamma\left(r, \overline{x}_{r}, \overline{x}_{\overline{\eta}_r}\right) d r 
+\left(\overline{M}_{s}^{\partial_{t} u}-\overline{M}_{\overline{\eta}_s}^{\partial_{t} u}\right)\\+
b(\overline{x}_{\overline{\eta}_s})\left(\overline{M}_{s}^{\partial_{y} u}-\overline{M}_{\overline{\eta}_s}^{\partial_{y} u}\right)+ \frac{1}{2}\sigma^2 (\overline{x}_{\overline{\eta}_s})\left(\overline{M}_{s}^{\partial_{y}^2 u}-\overline{M}_{\overline{\eta}_s}^{\partial_{y}^2 u}\right)+\\
+\left(\overline{M}_{s}^{\mathcal{R} u}-\overline{M}_{\overline{\eta}_s}^{\mathcal{R} u}\right)),
\end{multline}

\noindent where

\begin{equation}
\label{Gamma}
  \Gamma\left(t, x, \xi\right)=\left(  \overline{\mathcal{A}}(\partial_tu)+b(\xi)\overline{\mathcal{A}}(\partial_yu)+\frac{1}{2}\sigma^2(\xi)\overline{\mathcal{A}}(\partial_y^2u)+\overline{\mathcal{A}}(\mathcal{R}u)\right)(t,x,\xi).
\end{equation}

\noindent Using Theorem \ref{Fey_Kac} and the fact that $\overline{\mathcal{A}}(t,x,x)=\mathcal{A}(t,x)$, the first term in (\ref{A_bar_new}) vanishes. Further, since $(\overline{M}_{s}^{\partial_{t} u})$, $(\overline{M}_{s}^{\partial_{y} u})$ and $(\overline{M}_{s}^{\partial_{y}^2 u})$ are martingales, see \cite{pages2018numerical}. Moreover, using Fubini's theorem, it is straightforward to see that they have zero expectations. Therefore, the original expansion is reduced to
\begin{equation}
\label{Original_exp}
    \mathbb{E}\left[F\left(\overline{x}_{T}\right)\right]-\mathbb{E}\left[F\left(x_{T}\right)\right]=\mathbb{E}\left[\int_{0}^{T} \int_{\overline{\eta}_s}^{s} \Gamma\left(r, \overline{x}_{r}, \overline{x}_{\overline{\eta}_r}\right) d r d s\right].
\end{equation}
The function $\Gamma$ can be written explicitly in terms of $u$, $b$, $\sigma$, $\lambda$, $\mathcal{Q}$ and their partial derivatives at $t$ and $y$. Considering the terms of (\ref{Gamma}) separately, we have

\begin{equation*}
\small{
    \begin{aligned}
&\overline{\mathcal{A}}\left(\partial_{t} u\right)(t, x, \xi)=\partial_{t}^{2} u(t, x)+b(\xi) \partial_{t y}^{2} u(t, x)+ \frac{1}{2}\sigma^2b(\xi)\partial_y^2u(t,x)  + \mathcal{R}\left(\partial_{t} u\right)(t, x), \\
&\left(b \overline{\mathcal{A}}\left(\partial_{y} u\right)\right)(t, x, \xi)=b(\xi)\left(\partial_{ty}^{2} u(t, x)+b(\xi) \partial_{y}^{2} u(t, x)+ \frac{1}{2}\sigma^2b(\xi)\partial_y^3u(t,x) + \mathcal{R}\left(\partial_{y} u\right)(t, x)\right), \\
&\left(\frac{1}{2}\sigma^2 \overline{\mathcal{A}}\left(\partial_{y}^2 u\right)\right)(t, x, \xi)=\frac{1}{2}\sigma^2b(\xi)\left(\partial_{t}\partial_{y}^{2} u(t, x)+b(\xi) \partial_{y}^{3} u(t, x)+ \frac{1}{2}\sigma^2b(\xi)\partial_y^4u(t,x) + \mathcal{R}\left(\partial_{y}^2 u\right)(t, x)\right), \\
&\overline{\mathcal{A}}(\mathcal{R} u)(t, x, \xi)=\partial_{t}(\mathcal{R} u)(t, x)+b(\xi) \partial_{y}(\mathcal{R} u)(t, x)+ \frac{1}{2}\sigma^2b(\xi)\partial_y^2(\mathcal{R}u)(t,x) + \mathcal{R}(\mathcal{R} u)(t, x),
\end{aligned}}
\end{equation*}
where $\partial_{t}^{2} u=\FTS{\partial^{2}u}{\partial {t^2}}$, $\partial_{ty}^{2} u=\FTS{\partial^{2}u}{\partial t \partial y }$, $\partial_{t}^{3} u=\FTS{\partial^{3}u}{\partial {y^3}}$, and $\partial_{t}^{4} u=\FTS{\partial^{4}u}{\partial {y^4}}$.\\

\noindent To approximate the functions $\partial_{t}^{2} u$, $\partial_{ty}^{2} u$, $\partial_{y}^{2} u$, $\partial_{y}^{3} u$, $\partial_t \partial_{y}^{2} u$, $\partial_{y}^{4} u$, $\mathcal{R}\left(\partial_{t} u\right)$, $\mathcal{R}\left(\partial_{y} u\right)$, $\mathcal{R}\left(\partial_{y}^2 u\right)$, $\partial_{t}(\mathcal{R} u)$, $\partial_{y}(\mathcal{R} u)$ and $\mathcal{R}(\mathcal{R} u)$ at the order 0 around $\left(\overline{\eta}_r, \overline{x}_{\overline{\eta}_r}\right)$, we employ the the Taylor formula as seen in \cite{graham2013stochastic, lemaire2020thinning}. This leads to
\begin{equation*}
    \Gamma\left(r, \overline{x}_{r}, \overline{x}_{\overline{\eta}_r}\right)=\Gamma\left(\overline{\eta}_r, \overline{x}_{\overline{\eta}_r}, \overline{x}_{\overline{\eta}_r}\right)+O\left(h\right).
\end{equation*}

\noindent Recalling the identity (\ref{Original_exp}), we can write the r.h.s of (\ref{Original_exp}) as
\begin{equation*}
    \begin{aligned}
    \mathbb{E}\left[\int_{0}^{T} \int_{\overline{\eta}_s}^{s} \Gamma\left(r, \overline{x}_{r}, \overline{x}_{\overline{\eta}_r}\right) d r d s\right]=& \,\mathbb{E}\left[\int_{\overline{\eta}_s}^{s}d s\int_{0}^{T}  \Gamma\left(r, \overline{x}_{r}, \overline{x}_{\overline{\eta}_r}\right) d r d s\right],\\
    =&\, \mathbb{E}\left[(\overline{\eta}_s-{s})\int_{0}^{T}  \Gamma\left(r, \overline{x}_{r}, \overline{x}_{\overline{\eta}_r}\right) d r d s\right].
    \end{aligned}
\end{equation*}
\noindent We associate to the function $\Gamma$ the function $\gamma\in C^{1,2}\left( [0,T]\times E_1\right)$ defined by
\begin{equation*}
 \gamma(t, x)=\Gamma(t, x, x).
\end{equation*}
\noindent Further, for any $r \in[\overline{\eta}_s, s]$ it holds that $ \overline{\eta}_r=\overline{\eta}_s$ and that $|s-\overline{\eta}_s| \leq h$. Thus, we obtain
\begin{equation}
\label{Expect_decomp}
   \mathbb{E}\left[\int_{0}^{T} \int_{\overline{\eta}_s}^{s} \Gamma\left(r, \overline{x}_{r}, \overline{x}_{\overline{\eta}_r}\right) d r d s\right]=\mathbb{E}\left[\int_{0}^{T}(s-\overline{\eta}_s) \gamma\left(\overline{\eta}_s, \overline{x}_{\overline{\eta}_s}\right) d s\right].
\end{equation}

\noindent We now decompose the integral in (\ref{Expect_decomp}) into a finite sum of integrals over intervals of the form $\left[\overline{T}_{n}+k h,\left(\overline{T}_{n}+(k+1) h\right) \wedge \overline{T}_{n+1}\right]$. In these intervals, the function $\gamma$ is assumed to be constant, and hence, we only consider integrals of the form $\int_{kh}^{t}(s-k h) C d s$, where $k \geq 0$, $t \in[k h,(k+1) h]$ and $C$ is bounded. Thus,
\begin{equation*}
  \int_{k h}^{t}(s-k h) C d s=\frac{t-k h}{2} \int_{k h}^{t} C d s.  
\end{equation*}

\noindent If we write $t-kh=(t-(k+1)h) +h$, we get
\begin{equation*}
  \int_{kh}^{t}(s-kh) C d s=\frac{t-(k+1) h}{2} \int_{kh}^{t} C d s+\frac{h}{2} \int_{kh}^{t} C d s.
  \end{equation*}

\noindent Given that $C$ is a bounded constant, we deduce that 
\begin{equation*}
    \int_{k h}^{t}(s-kh) C d s=\frac{h}{2} \int_{k h}^{t} C d s+O\left(h^{2}\right).
\end{equation*}
\noindent Combining these results with the fact that $\gamma$ is assumed bounded and that ${\mathbb{E}\left[\overline{N}_{T}\right]<+\infty}$, we obtain the following representation
 
 \begin{equation}
 \label{gamma_eta}
    \mathbb{E}\left[F\left(\overline{x}_{T}\right)\right]-\mathbb{E}\left[F\left(x_{T}\right)\right]=\frac{h}{2} \mathbb{E}\left[\int_{0}^{T} \gamma\left(\overline{\eta}_s, \overline{x}_{\overline{\eta}_s}\right) d s\right]+O\left(h^{2}\right) . 
 \end{equation}

\noindent \textit{Step 2: First order expansion.}\\

We now consider the first term in the r.h.s of (\ref{gamma_eta}) and introduce the following random variables
\begin{equation*}
    \chi:=\int_{0}^{T} \gamma\left(\overline{\eta}_s, x_{\overline{\eta}_s}\right) d s  \qquad \text{and} \qquad \overline{\chi}:=\int_{0}^{T} \gamma\left(\overline{\eta}_s, \overline{x}_{\overline{\eta}_s}\right) d s.
\end{equation*}
Recalling $\overline{\tau}^{\star}$ from Definition \ref{taudef}, we write
\begin{equation}
\label{Exp_chi}
  \mathbb{E}[|\overline{\chi}-\chi|]=\mathbb{E}\left[|\overline{\chi}-\chi|\mathbbm{1}_{\min \left(T_{\overline{\tau}^{\star}}, \overline{T}_{\overline{\tau}^{\star}}\right) \leq T}\right]+\mathbb{E}\left[|\overline{\chi}-\chi|\mathbbm{1}_{\min \left(T_{\overline{\tau}^{\star}}, \overline{T}_{\overline{\tau}^{\star}}\right)>T}\right].  
\end{equation}
Since $\gamma$ is bounded and $\mathbb{P}\left(\min \left(T_{\overline{\tau}^{\star}}, \overline{T}_{\overline{\tau}^{\star}}\right) \leq T\right)=O\left(h\right)$ (as shown in the proof of Theorem \ref{Strong_thm}), we have
\begin{equation*}
\mathbb{E}\left[|\overline{\chi}-\chi| \mathbbm{1}_{\min \left(T_{\overline{\tau}^{\star}}, \overline{T}_{\overline{\tau}^{\star}}\right) \leq T}\right]=O\left(h\right).    
\end{equation*}
To address the second term on the r.h.s of (\ref{Exp_chi}), recall from (\ref{T_events}) that, on the event $\left\{\min \left(T_{\overline{\tau}^{\star}}, \overline{T}_{\overline{\tau}^{\star}}\right)>T\right\}$, the trajectories of the discrete time processes $(T_n,v_n)$ and $(\overline{T}_n, \overline{v}_n)$ are equal for all $n$ such that $T_n\in [0,T]$ (or equivalently $\overline{T}_n \in [0,T]$). This implies that $T_{k}=\overline{T}_{k}$ and $v_{k}=\overline{v}_{k}$ for all $k \geq 1$ such that $T_{k} \in[0, T]$. Thus, for all $n \leq \overline{N}_{T}$ and for all $s \in\left[\overline{T}_{n}, \overline{T}_{n+1}\left)\right.\right.$, we have

\begin{equation*}
x_{\overline{\eta}_s}=\left(( \phi\left((\overline{\eta}_s-\overline{T}_{n}), \overline{x}_{n}\right),v_n\right) \qquad \text{and} \qquad \overline{x}_{\overline{\eta}_s}=\left(( \overline{\phi}((\overline{\eta}_s-\overline{T}_{n}), \overline{x}_{n})),\overline{v}_n\right).
\end{equation*}

\noindent Therefore, on the event $\left\{\min \left(T_{\overline{\tau}^{\star}}, \overline{T}_{\overline{\tau}^{\star}}\right)>T\right\}$ we have
\begin{equation*}
   |\overline{\chi}-\chi| \leq \sum_{n=0}^{\overline{N}_{T}} \int_{\overline{T}_{n}}^{\overline{T}_{n+1} \wedge T}\left|\gamma\left(\overline{\eta}_s, \overline{x}_{\overline{\eta}_s}\right)-\gamma\left(\overline{\eta}_s, x_{\overline{\eta}_s} \right)\right| d s . 
\end{equation*}

\noindent Since $\gamma$ is defined as sum and product of bounded Lipschitz continuous functions, it is straightforward to see that under Assumptions \ref{reg_assump}-\ref{reg_assump2} and the conditions of Theorem \ref{Fey_Kac}, the mapping $y \mapsto \gamma(t, (y,v))$ is uniformly Lipschitz continuous in $(t, v)$ with constant $L_{\gamma}$. Using this Lipschitz property and applying Lemma \ref{betalem_1}, we obtain

\begin{equation*}
  \left|\gamma\left(\overline{\eta}_s, \overline{x}_{\overline{\eta}_s}\right)-\gamma\left(\overline{\eta}_s, x_{\overline{\eta}_s} \right)\right|  \leq L_{\gamma} C e^{L T}(n+1) h . 
\end{equation*}
Therefore, we can write
\begin{equation*}
  \mathbb{E}\left[|\overline{\chi}-\chi|\mathbbm{1}_{\min \left(T_{\overline{\tau}^{\star}}, \overline{T}_{\overline{\tau}^{\star}}\right)>T}\right] \leq L_{\gamma} C e^{L T} T h \mathbb{E}\left[\overline{N}_{T}\left(\overline{N}_{T}+1\right)\right].
\end{equation*}

\noindent Furthermore, given that $\overline{N}_{T} \leq N_{T}^{*}$ and $\mathbb{E}\left[N_{T}^{*}\left(N_{T}^{*}+1\right)\right]<+\infty$, it follows that

\begin{equation*}
   \mathbb{E}\left[|\overline{\chi}-\chi|\mathbbm{1}_{\min \left(T_{T^{\star}}, \overline{T}_{T^{\star}}\right)>T}\right]=O\left(h\right).
\end{equation*}
Therefore,
\begin{equation}
 \mathbb{E}\left[\int_{0}^{T} \gamma\left(\overline{\eta}_s, \overline{x}_{\overline{\eta}_s}\right) d s\right]=\mathbb{E}\left[\int_{0}^{T} \gamma\left(\overline{\eta}_s, x_{\overline{\eta}_s}\right) d s\right]+O\left(h\right).   
\end{equation}

\noindent In addition, under the regularity Assumptions \ref{reg_assump}-\ref{reg_assump2} and by Theorem \ref{Fey_Kac}, the function $(t, y) \mapsto \gamma(t,(y,v))$  exhibits uniform Lipschitz continuity in $v$. Further, for all $s \in[0, T]$ there exits $k \geq 0$ such that both $s$ and $\overline{\eta}_s$ fall within the same interval $\left[\overline{T}_{k}, \overline{T}_{k+1}\left)\right.\right.$ so that we have
\begin{equation*}
x_{s}=\left(\phi\left((s-\overline{T}_{k}), x_{k}\right),v_k\right) \qquad \text{and} \qquad x_{\overline{\eta}_s}=\left( \phi\left((\overline{\eta}_s-\overline{T}_{k}), x_{k}\right),v_k\right).
\end{equation*}

\noindent Employing the Lipschitz continuity of $\gamma$, the fact that $|s-\overline{\eta}| \leq h$ and the uniform boundness of both $b$ and $\sigma$, we obtain
\begin{equation*}
    \left|\gamma\left(s, x_{s}\right)-\gamma\left(\overline{\eta}_s, x_{\overline{\eta}_s}\right)\right| \leq C h,
\end{equation*}
 where $C$ is a constant independent of $h$. Thus, we can further conclude that
 \begin{equation*}
     \sup _{s \in[0, T]}\left| \mathbb{E}\left[\gamma\left(s, x_{s}\right)\right]-\mathbb{E}\left[\gamma\left(\overline{\eta}_s, x_{\overline{\eta}_s}\right)\right]\right|  \leq C h.
 \end{equation*}
 
\noindent Consequently,
\begin{equation}
   \left|\mathbb{E}\left[\int_{0}^{T} \gamma\left(\overline{\eta}_s, x_{\overline{\eta}_s}\right) d s\right]-\mathbb{E}\left[\int_{0}^{T} \gamma\left(s, x_{s}\right) d s\right]\right| \leq C T h 
\end{equation}
 Finally, the weak error expansion reads
 \begin{equation*}
   \mathbb{E}\left[F\left(\overline{x}_{T}\right)\right]-\mathbb{E}\left[F\left(x_{T}\right)\right]=\frac{h}{2} \mathbb{E}\left[\int_{0}^{T} \gamma\left(s, x_{s}\right) d s\right]+O\left(h^{2}\right) .  
 \end{equation*}
\end{proof}
\section{Test results}
In this section, we present the simulation results of the TEM scheme for modelling PDifMPs. These experiments, designed as test cases, aim to fill the gap in the existing literature where such processes have not yet been simulated using this method. By implementing these simulations, we investigate the performance of the scheme and its mean-square convergence, providing new insights into stochastic systems characterised by both continuous dynamics and Poisson-driven jumps.\\
In our adaptive grid setup, where time steps are not equidistant, the error calculation needs to account for varying interval sizes. To quantify the accuracy of the TEM scheme on such a grid, we compute the root mean-square error (RMSE) across all simulated paths and intervals. This approach is essential for capturing the dynamic adjustments in the grid and ensuring that our error measurement reflects the true behaviour of the simulation under test conditions. The error at each time step is calculated using the following formula
\begin{equation}
    \text{err}_{h,M} = \max_{n=0, \dots, N_T}\left(\frac{1}{M} \sum_{j=1}^M \big| x(t_n, \omega_j) - \Tilde{x}(t_n, \omega_j) \big|^2\right)^{1/2},
\end{equation}

where $M$ is the number of simulated paths, $ N_T $ is the total number of intervals in the adaptive grid for each simulation, dynamically determined by the process characteristics and the occurrence of jumps, here $T$ is a fixed final time. In this context, $ t_n $ are the points in time at which the state of the system is recorded, spaced irregularly due to the adaptive grid approach. These points are defined for each interval $[T_n, T_{n+1}]$ as
\begin{equation*}
        \{T_n + k h_n\}, \quad k = 0, \dots, \left\lfloor \frac{T_{n+1} \wedge T - T_n}{h_n} \right\rfloor,
\end{equation*}

where $ h_n $ is the local time step size for the interval, calculated as $ h_n = \frac{T_{n+1} \wedge T - T_n}{\lfloor (T_{n+1} \wedge T - T_n) / h \rfloor} $, ensuring that the grid points align with the end of each interval. In our computations we used $M=200$.\\
The results are presented as figures, where we have plotted the accuracy versus the step sizes on a logarithmic scale with a base $2$. The slope of the resulting lines corresponds to the observed order of the scheme. Lines with slopes of $0.5$ and $1$ are shown to allow comparisons with the convergence of these orders.
    
\subsection{Example 1: Geometric Brownian motion with Poisson jumps}
\label{Sim_Example1}
Our first example considers a PDifMP, $x_t=(y_t,v_t)$, where the continuous component, $y_t$, follows a geometric Brownian motion such that

\begin{equation}
\label{geom}
    dy_t=\mu y_t dt+\sigma y_t dW_t,
\end{equation}

on the time interval $[0,1]$. Here, $\mu$ represents the drift coefficient, $\sigma$ is the volatility coefficient, and $W_t$ is a standard Wiener process. The exact solution of \eqref{geom} is known and is given by

\begin{equation}
    y_t=y_0\exp{\big( (\mu-\frac{\sigma^2}{2})t+\sigma W_t \big)},
\label{sol_geo}
\end{equation}

where $y_0$ is the initial value of $y_t$ at $t = 0$.\\
 Further, the system experiences jumps at random times such that the discrete component $ v_t $ tracks the number of jumps that the system has experienced up to time $ t $. It is defined as an integer-valued counting process
\[
v_t = \sum_{i=1}^{N_t} 1,
\]
where $ N_t $ denotes the number of jumps by time $ t $, governed by a Poisson process with a fixed rate $ \lambda $. \\
The jump law $ \mathcal{Q} $ governs the stochastic evolution of the entire state $ x_t $ upon jumps. While $ v_t $ inherently tracks the occurrence of jumps, the continuous component $ y_t $ undergoes a multiplicative adjustment. The probabilistic rules for this update are specified by $ \mathcal{Q}$, which dictates how $ y_t $ is modified based on the jump magnitude $ \eta $, exponentially distributed with rate $ \lambda $, i.e, $\eta \sim \exp(\lambda)$, where the transition in the state from $ x $ to $ x' = (y', v') $ upon a jump is modeled as
\begin{equation*}
  y' = y \cdot e^\eta,  
\end{equation*}
and $ v' = v + 1 $, emphasizing that each jump increments the jump counter $ v_t $ by one. The corresponding procedure for simulating the stochastic dynamics of the PDifMP $x_t = (y_t, v_t)$ is detailed in Algorithm \ref{Alg_1} (see \ref{App_C}).\\
In Figure \ref{Example1} we present the convergence behaviour of the TEM scheme, which shows that the error decreases significantly as the step size is reduced, closely approximating a theoretical slope of $\frac{1}{2}$, indicating first-order convergence.

\begin{figure}[h!]
\centering
\includegraphics[width=0.55\textwidth]{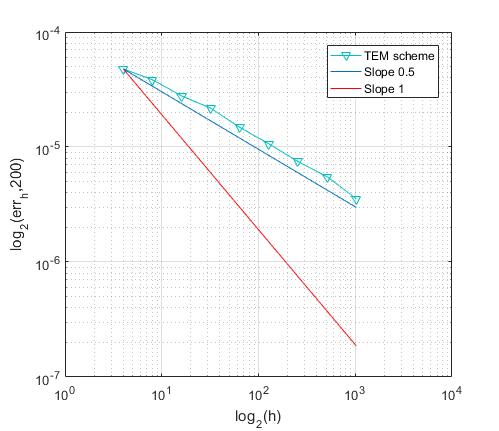}
\caption{Logarithm (base 2) of the (RMSE) of TEM scheme against the logarithm (base 2) of different values of the step size $h$. Parameters: final time $T=1$, initial value $y_0=50$, drift coefficient $\mu=0.001$, volatility coefficient $\sigma=0.002$, and Poisson jump rate $\lambda=0.0001$.}
\label{Example1}
\end{figure}

\subsection{Example 2: Geometric Brownian motion with adaptive jump rate function}

In our second example, we continue to use the geometric Brownian motion framework, as described by Equation \eqref{geom}, which was established in the first example. This time, however, we introduce a variation in the jump dynamics by implementing a non-homogeneous Poisson process, where the jump rate is no longer fixed but varies with the state of the process. The jump rate in this example varies with the state of the process, making it dependent on the current value of $y_t $. Specifically, the jump rate function $ \lambda(x_t)$ is given by:
\begin{equation*}
    \lambda(x_t):=\lambda(y_t,v_t) = 0.01 \times y_t,
\end{equation*}
where $y_t$ represents the current value of the continuous component. This formulation implies that higher values of the process lead to an increased likelihood of jumps, introducing a realistic dynamic that models increased market volatility or risk as asset values rise.\\
The objective of this simulation is to assess how the TEM scheme handles variable jump intensities that depend on the state of the process, particularly examining its impact on the accuracy and stability of the numerical solution. For the second example, where the jump rate function is adaptive and state-dependent, the transition in the state from $ x $ to $ x' = (y', v') $ upon a jump is defined by the simple yet significant modification
\begin{equation*}
    y^{'} = 0.9 \times y.
\end{equation*}
The  procedure for simulating the stochastic dynamics of the PDifMP $x_t = (y_t, v_t)$ is detailed in Algorithm \ref{Alg_2} (see \ref{App_C}).\\
\noindent Figure \ref{Example2} shows the convergence behaviour of the (TEM) scheme for a dynamic drift and volatility model with an adaptive jump rate function. The plot displays the logarithm (base 2) of the error against the logarithm (base 2) of different values of the step size $h$, illustrating the error reduction of the scheme as the step size decreases. Notably, the performance of the TEM scheme closely matches the theoretical slope of $\frac{1}{2}$, indicating first order convergence. This suggests that the TEM scheme effectively captures the dynamics of the model, even when the jump intensity varies with the state of the process, ensuring robust and accurate simulations over different step sizes.

\begin{figure}[h!]
\centering
\includegraphics[width=0.55\textwidth]{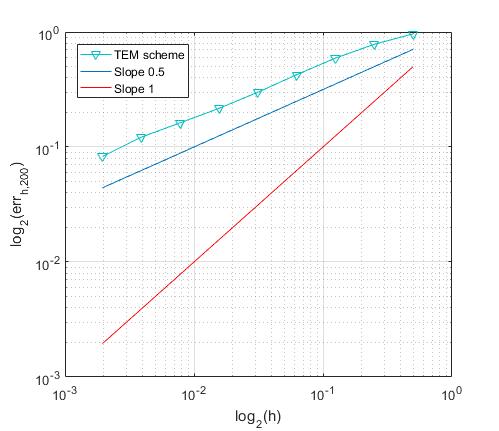}
\caption{Logarithm (base 2) of the (RMSE) of the TEM scheme against the logarithm (base 2) of different values of the step size $h$.  Parameters for this dynamic model include: final time $T=1$, initial value $y_0=50$, drift coefficient $\mu=0.01$, volatility coefficient $\sigma=0.2$, and thinning Poisson jump rate $\lambda^{*}=0.001$.}
\label{Example2}
\end{figure} 

The TEM scheme was effectively validated on two examples, demonstrating its accuracy and convergence. The first example provided a basic validation under controlled conditions, demonstrating the scheme's ability to handle stochastic jumps and replicate known solutions. The second example, involving state-dependent jump rates, introduced complexity and demonstrated the scheme's adaptability to dynamic changes in process parameters. Both examples confirmed the scheme's first-order convergence, which agrees well with theoretical results discussed in Section \ref{section4}.

\section{Numerical results: Simulation of glioma cell trajectories}
\label{section6}
We now apply the numerical method developed and analysed earlier to study the random movement of a single glioma cell at the microscopic scale, influenced by the tumour microenvironment. Our simulation approach involves employing the thinning method to generate jump times for the jump process (see Section \ref{Thinning_Sec}). Further, we use the Euler-Maruyama method to approximate the flow map $\phi$.\\
It would be ideal if we could compare the numerical solutions using our algorithms with the analytic solutions. Unfortunately, due to the complexity of the $x$-dependent switching process, closed-form solutions are not available. Instead, we use a splitting method to approximate the solution of \eqref{Micro_sys} between switching times for numerical demonstrations and comparison.
\subsection{PDifMP description for cell motion}
Let $u_t=(x_t,z_t,v_t)$, $t\in [0,T]$, be a stochastic process describing glioma cell movement, such that $x_t$ represents cell position, $z_t$ describes environmental signals influencing cell migration and $v_t$ refers to the cell velocity. The process $(u_t)_{t\in [0,T]}$ is a PDifMP, such that $(x_t,z_t)$ refer to the continuous component of the process, while $v_t$ is the jump process.
In the following analysis, we restrict to the one dimensional case for illustrative purposes, although the results can be generalised to two and three dimensions. We adopt a simplified model to assess the behaviour of the glioma cellular motion modelled through our simulation approach. We refer the interested readers to \cite{meddah2023stochastic, engwer2016multiscale, harpold2007evolution, swanson2000quantitative, painter2013mathematical, aubert2006cellular, hillen20042, hillen2013transport} and the references therein, for further details about various approaches for glioma modelling.\\
We define the state space of the model as $E=[-1,1]\times [0,1]\times \mathbf{V}_{\alpha\mathbb{S}}$, where $\mathbf{V}_{\alpha\mathbb{S}}$ represents a finite subset of $\alpha\mathbb{S}$. Here, $\mathbb{S}$ denotes the unit circle in $\mathbb{R}$, and $\alpha$, representing the mean speed of a tumour cell, is assumed to be constant. Since the brain is a bounded domain with dimensions approximately $140$ mm by $167$ mm \cite{Brain2021}, our simulation focuses on the local motion of a single glioma cell, and for computational and modeling simplicity we restrict the domain to $[-1,1]$. Further for the amount of bound receptors we restrict to $[0,1]$ as in \cite{meddah2023stochastic, conte2020glioma}.\\
The model describing a contact-mediated movement of a glioma cell at the microscopic scale reads

\begin{equation}
	\label{Micro_sys}
	\left\{
	\begin{array}{ll}
		dx_t & = \big(\frac{1}{2}z^2_tx_t-b z_tx_t+ az_tx_t+v_t\big)dt+\big(z_tx_t\big)dW_t,\\[0.2cm]
		dz_t & =-(k^{+}A(x)+k^{-})z_t+f^{'}(A(x))v_tA^{'}(x) dt,\\[0.2cm]
		dv_t & = 0dt.\\
	\end{array}
	\right. 
\end{equation}

\begin{itemize}
    \item Here, $W_t$ is a standard Wiener process and $k^{+}$ and $k^{-}$ are the rates of attachment and detachment between cell and tissue, respectively.
    \item The function $A(x)=\frac{1}{1+e^{-x}}$ represents the local concentration of extracellular factors that influence the binding process between cell surface receptors (integrins) and the microenvironment.
    \item  The function $f(A(x))=\frac{k^{+}A(x)}{k^{+}A(x)+k^{-}}$ represents the fraction of integrins bound to the extracellular matrix (ECM) at the steady state for a given local concentration $ A(x) $, see \cite{engwer2016multiscale, engwer2015glioma}.. Here, $A(x)$ models the concentration of extracellular factors that influence cell migration, ensuring a smooth transition between low and high concentrations.
    \item The parameter $z$ represents the amount of bound receptors of the cell surface to receptors in its extracellular environment. It affects how the cell interacts with the environment.
   \item The term $\frac{1}{2}z_t^2x_t$ represents the nonlinear response of the cell's motion to the concentration of bound receptors. This term captures the cell's directional movement influenced by the gradient of signaling molecules, potentially exhibiting saturation behaviour as receptor binding increases.
   \item The functions $az_tx_t$ and $-bz_tx_t$, $a,b>0$, represent the influence of external factors on the cell's movement. Specifically, $az_tx_t$ reflects the strength of a chemoattractant effect, which promotes cell migration towards higher concentrations of the attractant. Conversely, $-bz_tx_t$ represents a chemotactic repellent effect, discouraging cells from moving toward higher concentrations of the repellent when receptors are bound. These terms account for directional changes in response to the presence of signaling molecules, either attracting or repelling the cell's migration, depending on the nature of the molecules involved.
    \item The function $z_tx_t$ is the diffusion coefficient, representing random fluctuations in the cell's position.
\end{itemize}

\noindent In the analysis of the stochastic system \eqref{Micro_sys} governing glioma cell motion, we need to ensure that the coefficients satisfy the Lipschitz conditions. This is crucial for the stability and convergence of the numerical method. For the first SDE in \eqref{Micro_sys}, we specifically focus on the term \( \frac{1}{2}z^2_t x_t \) in the drift function, as the other terms are globally Lipschitz. To this end, we compute the partial derivatives with respect to \( x_t \) and \( z_t \):
\begin{align*}
    \frac{\partial }{\partial x_t}\left(\frac{1}{2}z^2_t x_t\right) &= \frac{1}{2}z^2_t, \\
    \frac{\partial}{\partial z_t}\left(\frac{1}{2}z^2_t x_t\right) &= z_t x_t.
\end{align*}
\noindent Given that \( z_t \in [0, 1] \) and \( x_t \in [-1, 1] \), these derivatives are bounded by \( \frac{1}{2} \) and 1, respectively. Therefore, the term \( \frac{1}{2}z^2_t x_t \) satisfies the local Lipschitz condition.\\
\noindent For the second ODE in \eqref{Micro_sys}, we analyse the following partial derivatives, the partial derivative with respect to \( x_t \) is given by
    \[
    -k^+ A^{'}(x) z_t + f^{''}(A(x)) A^{'}(x) v_t + f^{'}(A(x)) v_t A^{''}(x).
    \]
    Given that \( A(x)\) is a sigmoid function, \( A^{'}(x) \) and \( A^{''}(x) \) are bounded for \( x_t \in [-1, 1] \). It is straightforward to verify that the remaining terms in the drift functions and the diffusion coefficients also satisfy the necessary Lipschitz conditions.

\noindent The characteristic triplet $(\phi,\lambda,\mathcal{Q})$ of the PDifMP $u_t=(x_t,z_t,v_t)$ is given by

\small\begin{equation}
	\label{charc}
	\left\{
	\begin{array}{ll}
		\phi& = \left( x_0 + \int_0^t \big(\frac{1}{2}z_s^2x_s-bz_sx_s+az_sx_s+v_s\big)ds+ \int_0^t \big(z_sx_s\big)dW_s, z_t \right)^{T},\\[0.2cm]
		\lambda & = \lambda_0-\lambda_1z_t,\\[0.2cm]
		\mathcal{Q} & = \dfrac{1}{w}\displaystyle\int_{\alpha\mathbb{S}}q(x,v) d v,
	\end{array}
	\right. 
\end{equation}
\normalsize
where $x_0$ refers to the initial position, $\lambda_0$ refers to the basal turning frequency of an individual cell accounting for the "spontaneous" cell motility, while the term $\lambda_1z$ represents the variation of the turning rate in response to environmental signals, we refer to \cite{conte2020glioma, hunt2018dti, meddah2023stochastic, sidani2007} for more details. Further, $w$ is a scaling constant,  given by
\begin{equation*}
	w:= \int_{\alpha \mathbb{S}} q(x,v)dv=\alpha^2.
\end{equation*}
Here, $q$ refers to the fiber distribution function. We find in the literature various expressions for this function, including the Von Mises-Fisher Distribution, the Peanut Distribution Function (PDF), and the Orientation Distribution Function (ODF) \cite{painter2013mathematical, aganj2011, conte2020glioma}. In this context, we consider the ODF defined by
\begin{equation}
	q(x,v)=\frac{1}{4\pi \mid\mathbb{D}(x)\mid^{\frac{1}{2}}(v^T(\mathbb{D}(x))^{-1}v)^{\frac{3}{2}}}\,,
	\label{q_ODF}
\end{equation}
where $\mathbb{D}(y)$ is the diffusion coefficient that accounts for information about water diffusivity in the brain, \cite{painter2013mathematical}. 
Note that in the 1D case, the diffusion tensor $\mathbb{D}(x)$ reduces to a scalar value $D(y)$, and the rest of the equation remains the same. Moreover, it is straightforward to verify that the jump rate function $\lambda$ and $\mathcal{Q}$ satisfy the global Lipschitz condition.\\
The solution of the system \eqref{Micro_sys} cannot be written in an explicit form, and thus a numerical approximation is required. Let $[0,T]$ be the time interval of interest. For a given time step $h>0$, we consider the discretisation $t_i=ih$, for $i\geq 0$, where $h=T/i$. At each discrete time point $t_i$, we denote by $\Tilde{u}_{t_i}$ the numerical approximation of the PDifMP $u_t$ given by $\Tilde{u}_{t_i}=(\Tilde{x}_{t_i}, \Tilde{z}_{t_i}, \Tilde{v}_{t_{i-1}})$.

\subsection{Numerical results}
\subsubsection{Thinned Euler-Maruyama}
We use the thinned Euler-Maruyama method, which was introduced in Section \ref{Thinning_Sec}, to simulate the dynamics of a single glioma cell at the microscopic scale. In particular, we take into account the random velocity changes that are influenced by the tumour microenvironment. Our focus is on investigating the impact of the velocity jump rate function defined as $\lambda = \lambda_0 - \lambda_1 z_t$, on the overall cellular behaviour.\\
To achieve this, we conducted several simulations, each designed to study different aspects of the proposed approach. We first specify the coefficients involved in Equation \eqref{Micro_sys}, which are reported in Table \ref{parameter_mod1}.\\

\begin{table} [H]
	\begin{center}
		\footnotesize{\begin{tabular}{c|c|c|c} 
			\hline  
			\rule{0pt}{3ex}Parameter & Description & Value (unit) & Source \\[1ex]
			\hline
			\rule{0pt}{2ex} $k^{+}$ & attachment rate    & $0.01 $ (s$^{-1}$) &\cite{lauffenburger1996receptors}\\[1.5ex]
			\rule{0pt}{2ex} $k^{-}$ & detachment rate    & $0.01 $ (s$^{-1}$) &\cite{lauffenburger1996receptors}\\[1.5ex]
			\rule{0pt}{2ex} $v$ & initial velocity of tumour cells    & $0.21 \cdot 10^{-3}$ (mm$\cdot$ s$^{-1}$) &\cite{chicoine1995}\\[1.5ex]
			\rule{0pt}{2ex} $\lambda_0$  & turning frequency  &$[0.2,5]$ (s$^{-1}$)&  \cite{sidani2007}\\[1.5ex] 
			\rule{0pt}{2ex} $\lambda_1$  & turning frequency &$[0,5]$ (s$^{-1}$)& \cite{engwer2016effective}\\[1.5ex]
            \rule{0pt}{2ex} $a$ & chemoattractant concentration  & $[0.01-3]$ & proposed value \\[1.5ex]
            \rule{0pt}{2ex} $b$ & chemorepellent concentration  & $[0.01-3]$ & proposed range \\[1.5ex]
         \rule{0pt}{2ex} $h$ & time steps  & $[10^{-4}, 10^{-2}] $ & proposed range \\[1.5ex]
         \rule{0pt}{2ex} $\lambda^{*}$ & intensity of the Poisson process  & $0.6$ & estimated \\[1.5ex]
			\hline
		\end{tabular}}
\end{center}
\vspace{0.3cm}
 \caption{\footnotesize{{\textbf{Model parameters}}}.}
\label{parameter_mod1}
\end{table}
\noindent Note that both $a$ and $b$ are dimensionless quantities, because they represent the relative strengths of the chemoattractant and repellent forces acting on the cell.\\
\begin{figure}[h!]
\centering
\includegraphics[width=1\textwidth]{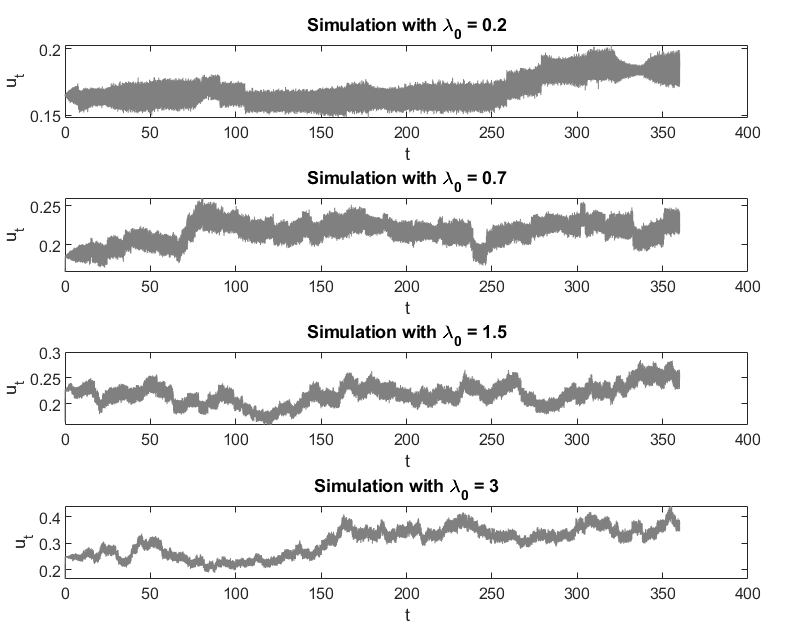}
\caption{Simulation results of the progression of glioma cell movement under different values of the basal turning rate $\lambda_0$, for fixed time steps $h=10^{-4} \, (\text{s})$ and a constant sensitivity to environmental signals parameter $\lambda_1=0.08 \, (\text{s}^{-1})$.}
\label{TEM_1}
\end{figure} 

\noindent In Figure \ref{TEM_1} and \ref{TEM_2}, we investigate the temporal evolution of $u_t$ under varying values of $\lambda_0$ and $\lambda_1$ using a time step  $h=10^{-4}$ and a simulation duration of $T=360 \,(\text{s})$. In particular, the chemoattractant concentration is set to $a=0.5$, the chemorepellent concentration is fixed at $b=0.2$, and all other model parameters are kept constant, as specified in Table \ref{parameter_mod1}.\\
Figure \ref{TEM_1} highlights the sensitivity of $u_t$ to different values of $\lambda_0$, while keeping $\lambda_1$ constant at $0.08 \,(\text{s}^{-1})$. These observations reveal a direct connection between $\lambda_0$ and the amplitude of variation observed in $u_t$.
As $\lambda_0$ decreases, the range of fluctuation is narrower, indicating a more stable and less dynamic behaviour of the glioma cell.
Conversely, higher values of $\lambda_0$ lead to larger fluctuations, suggesting enhanced cell motility and intensified interactions with the surrounding environment. This behaviour is consistent with findings from our previous study on tumour bulk dynamics, \cite{meddah2023stochastic}, which showed how varying $\lambda_0$ affects tumour spread and cellular persistence. Specifically, a lower $\lambda_0$ was associated with less frequent cellular turning, leading to more persistent migration pathways and, consequently, a greater macroscopic spread of tumour cells, particularly at the periphery of the tumour. In contrast, a higher $\lambda_0$ led to increased cellular turning, which corresponded to more dynamic and less predictable migration patterns, ultimately resulting in a reduced spread of the tumour mass.\\
\begin{figure}[h!]
\centering
\includegraphics[width=1\textwidth]{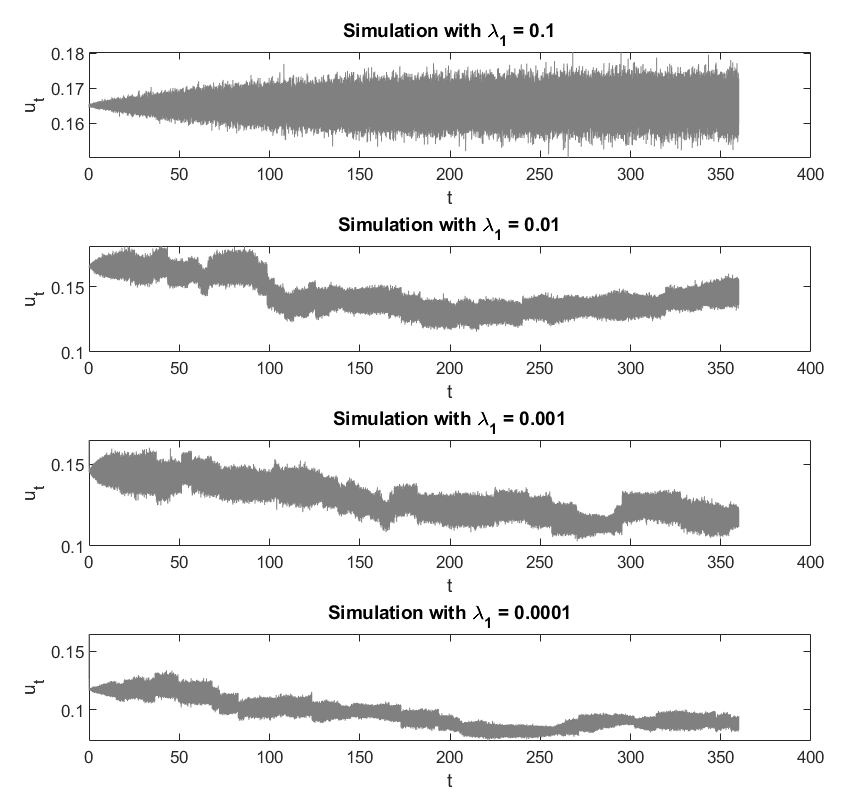}
\caption{Simulation results of the progression of glioma cell movement under different values of the basal turning rate $\lambda_0$, for fixed time steps $h=10^{-4} \, (\text{s})$ and a constant sensitivity to environmental signals parameter $\lambda_1=0.08 \, (\text{s}^{-1})$.}
\label{TEM_2}
\end{figure} 
\noindent In Figure \ref{TEM_2}, we describe the behaviour of $u_t$ under the influence of varying the values of $\lambda_1$, while $\lambda_0$ is set to $0.2 \,(\text{s}^{-1})$. As $\lambda_1$ is reduced by successive magnitudes, from $10^{-1} \,(\text{s}^{-1})$ to $10^{-4} \,(\text{s}^{-1})$, we observe a consequential decrease in the fluctuation amplitude of $u_t$. This implies a stabilising effect on glioma cell migratory behaviour. This stabilisation suggests that $\lambda_1$ is a moderator of cellular responsiveness, with higher values enhancing the ability to respond to environmental stimuli. Interestingly, the overall migration activity represented by $u_t$ shows robustness to changes in $\lambda_1$, reinforcing the idea that $\lambda_1$ primarily affects the diversity of migratory responses, rather than the average migratory speed or distance. These simulations suggest a biological interpretation where $\lambda_1$ encapsulates the adaptability of glioma cells, with lower values potentially indicating a homogenised, and perhaps less invasive, migration pattern. This echoes findings in \cite{meddah2023stochastic}, where $\lambda_1$ was critical in modulating tumour cell dynamics within brain tissue heterogeneity. Specifically, increased $\lambda_1$ enhanced tumour interaction with and adaptation to microenvironmental anisotropy, thereby influencing heterogeneity within the tumour itself. In contrast, lower values of $\lambda_1$ lead to more uniform behaviour, suggesting that cells are less influenced by the surrounding microenvironment and therefore exhibit a more homogenised, potentially less invasive migration pattern. This relationship highlights the consistency of the role of $\lambda_1$ across different scales of biological organisation and its profound influence on cellular behaviour in response to the microenvironment.

\begin{figure}[h!]
\centering
\includegraphics[width=1\textwidth]{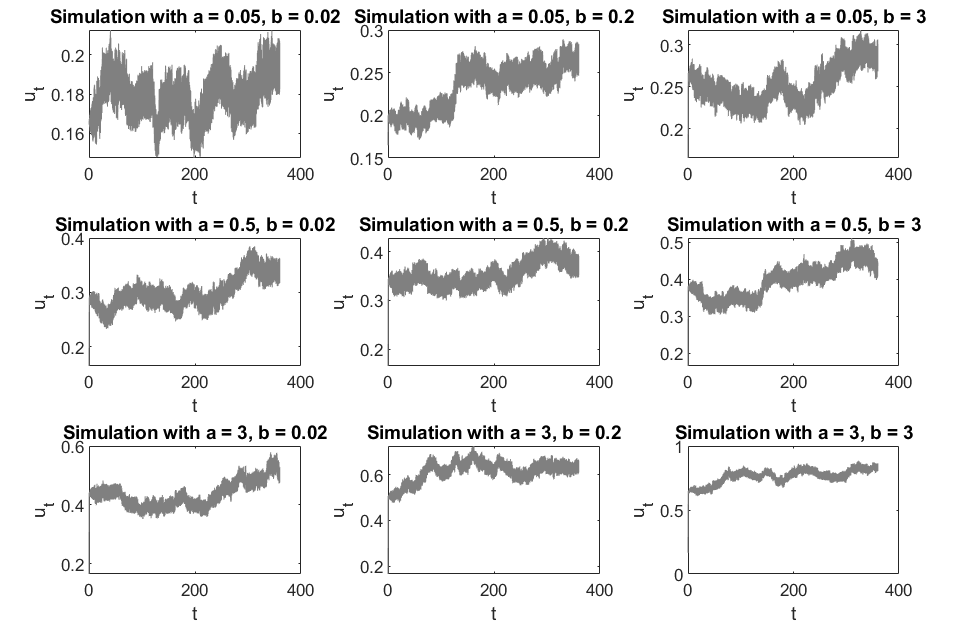}
\caption{Simulation results for the glioma cell movement for fixed time steps $h=10^{-4} \, (\text{s})$, with $\lambda_1=0.08 \, (\text{s}^{-1})$ and $\lambda_0=0.7 \, (\text{s}^{-1})$. The values of $a$ and $b$ are taken within the range in Table \ref{parameter_mod1}.}
\label{TEM_3}
\end{figure} 
\noindent Figures \ref{TEM_3} and \ref{TEM_4} illustrate the effects of varying the attractant parameter $a$ and the repellent parameter $b$ on the migratory behaviour of glioma cells, represented by the variable $u_t$ over a $360 \,(\text{s})$ time frame. Figure \ref{TEM_3} corresponds to a scenario with a larger fixed value of ${\lambda_0 = 0.7 \,(\text{s}^{-1})}$, and Figure \ref{TEM_4} represents a scenario with a smaller fixed value of ${\lambda_0 = 0.2 \,(\text{s}^{-1})}$. In both cases, the sensitivity to environmental signals parameter $\lambda_1$ is fixed to $0.08 \,(\text{s}^{-1})$.\\ 
\noindent The simulation results, presented in Figure \ref{TEM_3} with ${\lambda_0 = 0.7 \,(\text{s}^{-1})}$, shows a consistent pattern in the overall cellular motion of glioma cells as the attractant and repellent concentrations values changes. In particular, increasing $a$ from $0.05$ to $3$ while keeping $b$ low at $0.02$ (first left column), we observe a pronounced elevation in the fluctuation range of $u_t$, suggesting that higher chemoattractant concentrations significantly enhance the mobility of the glioma cell. This enhancement aligns with the chemoattractant's role as a driving force, pushing the cell towards higher attractant concentrations.
Further, when $b$ is increased to $0.2$ with a moderate $a$ value of $0.5$ (second diagonal figure), a substantial rise in $u_t$ fluctuations is also notable, indicating increased cell activity possibly due to the combined influence of attractant and repellent factors. The most dynamic cellular behaviour is observed when both attractant and repellent concentrations are simultaneously at their maximum values of 3 (third bottom right figure). The range of $u_t$ displays a remarkable expansion, reflecting the heightened motility triggered by the strong conflicting cues from the attractant and repellent. This suggests that the interplay between these external signals leads to a more complex and dynamic pattern of cellular migration.\\
\begin{figure}[h!]
\centering
\includegraphics[width=1\textwidth]{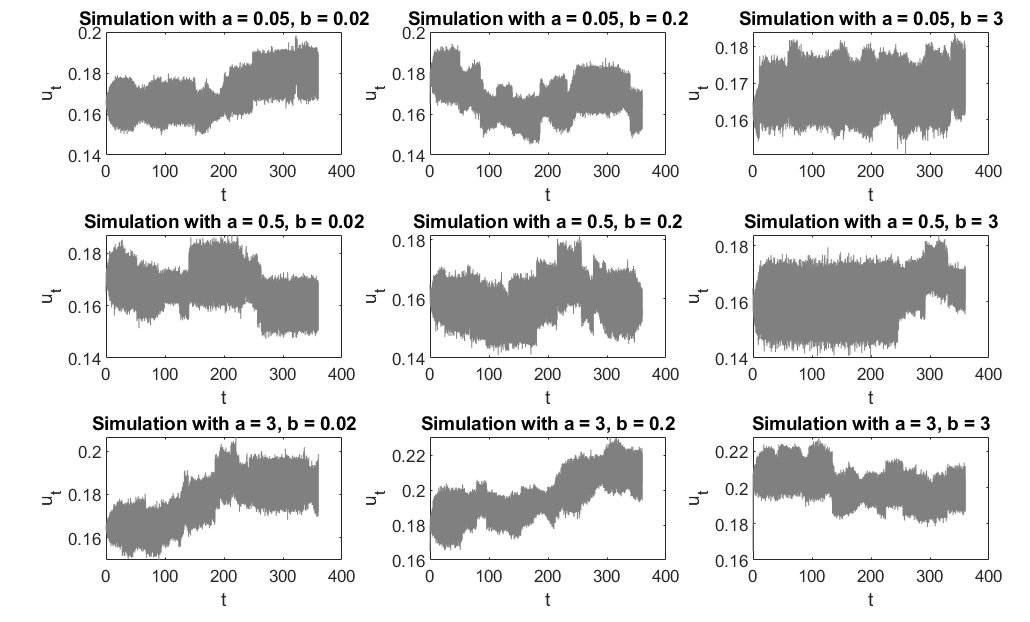}
\caption{Simulation results for the glioma cell movement for fixed time steps $h=10^{-4} \, (\text{s})$, with $\lambda_1=0.08 \, (\text{s}^{-1})$ and $\lambda_0=0.2 \, (\text{s}^{-1})$. The values of $a$ and $b$ are taken within the range in Table \ref{parameter_mod1}}
\label{TEM_4}
\end{figure} 
In the next test, we set the value of the net motility $\lambda_0$ to $0.2 \,(\text{s}^{-1})$. This lower value of $\lambda_0$ was intentionally chosen to investigate the effect of the attractant and repellent concentrations on the cellular motion while controlling the cell's intrinsic motility. By reducing $\lambda_0$, we are able to isolate the influence of external cues on the cell's migration pattern, without being overshadowed by the cell's inherent motility.
\noindent In Figure \ref{TEM_4}, the general trends remain consistent; however, the overall values of $u_t$ are lower compared to the simulations in Figure \ref{TEM_3}. This suggests that while $a$ and $b$ are primary determinants of the migratory response, the underlying motility rate $\lambda_0$ fundamentally influences the extent of this response. Lower $\lambda_0$ seems to suppress the migratory activity, which could imply that the cell's intrinsic motility is a limiting factor mediating the impact of external cues represented by $a$ and $b$.\\
\noindent These observations collectively indicate that the parameters $a$ and $b$ serve as significant controls over glioma cell migration, modulating the extent and variability of the migratory behaviour. The attractant parameter $a$ appears to promote migration, while the repellent parameter $b$ seems to inhibit it. The role of $\lambda_0$ has also been highlighted as a central factor that can enhance or attenuate cellular responses to these signals. These results provide valuable insights into the complex dynamics governing cell migration and highlight the importance of considering both internal and external factors when evaluating the migratory patterns of glioma cells.
\subsubsection{Thinned-Splitting Method for PDifMPs}

We present here a different approximation for the PDifMP called the Thinned-Splitting Method (TSM), which is a combination of two well-established simulation methods, i.e. the thinning method discussed in Section \ref{Thinning_Sec}, which efficiently simulates the jump times of the underlying jump process, combined with the splitting method, widely used to simulate the stochastic continuous dynamics of the process between jump times \cite{buckwar2022splitting, ableidinger2017stochastic, alamo2016technique}.\\
Consider a discretised time interval $[T_n,T_{n+1})$, $n\geq 1$, with equidistant time steps $h=t_m-t_{m+1}$, $m\leq n$, where $t_0=T_n$ and $t_m=T_{n+1}$. Here, $(T_n)_{n\geq 1}$ are determined as in Section \ref{Thinning_Sec}. Throughout, we denote by $(\Tilde{u}_{t_m})_{m=0,\ldots, n}$ the numerical solution of system \eqref{Micro_sys} approximating the process $(u_t)_{t\in[0,T]}$ at $t_m$, such that $\Tilde{u}(0):=u_0$.\\
The key idea behind splitting methods is to decompose the right-hand side of the system \eqref{Micro_sys} into explicitly solvable subequations and then to compose the obtained explicit solutions in a proper way.\\
\textit{Step(1):~Choice of the subequations.} We first start by rewriting the system \eqref{Micro_sys} for $u_t=(x_t,z_t,v_t)^T$ and $W_t=(W^1_t,W^2_t,W^2_t)^T$ as
\begin{equation}
\label{sys_sub_eq}
    du_t=\{f^{(1)}(u_t)dt+G(u_t) dW_t\} + f^{(2)}(u_t)dt + f^{(3)}(u_t)dt,
\end{equation}

with 
\begin{equation*}
\begin{array}{ll}
		f^{(1)}(u_t)= & \begin{pmatrix}
\frac{1}{2}z^2_tx_t-b z_tx_t+a z_tx_t +v_t\\[0.2cm]
0\\[0.2cm]
0
\end{pmatrix},\\[0.7cm]
		f^{(2)}(u_t)= & \begin{pmatrix}
0\\[0.2cm]
-(k^{+}A(x)+k^{-})z_t+f^{'}(A(x))v_tA^{'}(x)\\[0.2cm]
0
\end{pmatrix},\\[0.7cm]
		f^{(3)}(u_t)= & \begin{pmatrix}
0\\
0\\
0
\end{pmatrix},\\[0.2cm]
\end{array}
\end{equation*}
and $G(u_t)=\begin{pmatrix}
z_tx_t & 0 & 0\\
0 & 0 & 0\\
0 & 0 & 0
\end{pmatrix}$.\\

\noindent Hence, for all $t\in [0,T]$, we can rewrite the system \eqref{Micro_sys} into the following subsystems 

\begin{align}
    du^{[1]}_t &= f^{(1)}(u_t)dt+G(u_t)dW_t, \label{subsubeq:1}  \\
    du^{[2]}_t &= f^{(2)}(u_t)dt, \label{subsubeq:2}\\
    du^{[3]}_t &= f^{(3)}(u_t)dt. \label{subsubeq:3}
\end{align}

\noindent \textit{Step(2):~Exact solution of the subequations.} We denote by $\Phi^{[k]}_t$, k=1,2,3 their exact solutions at time $t$ and starting from $u_0$. Equation \eqref{subsubeq:1} is not solvable explicitly with respect to the first component of $u_t$. Therefore, we split it into two explicitly solvable differentiable equations as follows
\begin{align}
    du^{[1,1]}_t &= \frac{1}{2}z^2_tu^{[1,1]}_t dt+z_tu^{[1,1]}_tdW_t, \label{subeq:1}  \\[0.3cm]
    du^{[1,2]}_t &= (-bz_tu^{[1,2]}_t+ az_tu^{[1,2]}_t+v_t)dt, \label{subeq:2}
\end{align}
where the superscripts $[1,1]$ and $[1,2]$ correspond to the numbering of the subsystems derived from the first vector $u^{[1]}$. We denote by $\psi^{[k]}_t$, k=1,2, their exact solutions at time $t$ and starting from $x_0$ and by $\Phi^{[1]}_t$ the composition of their solutions. Note that usually the choice of the subsystems is not unique.\\
The first subequation \eqref{subeq:1} is a linear SDE with respect to $x_t$ (the first component of the PDifMP $u_t$), and has a unique explicit solution given by
\begin{equation*}
\psi^{[1]}_{h}(u^{[1,1]}_{t_{m-1}}):=u^{[1,1]}_{t_m}=u^{[1,1]}_{t_{m-1}}\exp{\left(z_{t_{m-1}}W_{t_{m-1}}\right), \qquad m=1,\ldots,n}.
    \label{solution_sub_eq1}
\end{equation*}
\noindent Further, using the fact that the ODE \eqref{subeq:2} has constant coefficients, it can be solved exactly in closed form, see \cite{chen2020structure}, with

\begin{equation*}
   \psi^{[2]}_{h}(u^{[1,2]}_{t_{m-1}}):=u^{[1,2]}_{t_{m}}=\exp{(h(-bz_{t_{m-1}}+az_{t_{m-1}}))} u^{[1,2]}_{t_{m-1}}+\frac{\exp{(h(-bz_{t_{m-1}}+az_{t_{m-1}}))}-1}{h(-bz_{t_{m-1}}+az_{t_{m-1}})}hv_{t_{m-1}}.
\end{equation*}
Therefore, using the Lie trotter composition of the flows, see \cite{mclachlan2002splitting}, the explicit solution for the first SDE \eqref{subsubeq:1} is given by
\small{\begin{align*}
    \Phi^{[1]}_h(u^{[1]}_{t_{m-1}}):= & (\psi^{[1]}_{h}\circ \psi^{[2]}_{h})(u^{[1]}_{t_{m}})\\[0.3cm]
    =& \exp{\left(z_tW_t\right)} \left( u^{[1]}_{t_{m-1}}\exp{(h(-bz_{t_{m-1}}+az_{t_{m-1}}))} +\frac{\exp{(h(-bz_t+az_t))}-1}{h(-bz_t+az_t)}hv_t)\right).
\end{align*}}
\noindent Similarly to \eqref{subeq:2}, the exact solution to Equation \eqref{subsubeq:2} is given by
\small{
\begin{align*}
     \Phi^{[2]}_{h}(u^{[2]}_{t_{m-1}})&:=u^{[2]}_{t_{m}}\\[0.3cm]
     &=\exp\left( -h(k^{+}A(x)+k^{-}) \right) u^{[2]}_{t_{m-1}}+ \FTS{\exp\left( -h(k^{+}A(x)+k^{-}) \right)-1}{-h(k^{+}A(x)+k^{-})}hf^{'}(A(x))v_{t_{m-1}}A^{'}(x)
\end{align*}}
The solution to Equation \eqref{subsubeq:3} is straightforward and given by
\begin{equation*}
    \Phi^{[3]}_{h}(u^{[3]}_{t_{m-1}}):=u^{[3]}_{t_{m}}=v_{t_{m-1}},
\end{equation*}
such that for all $m\leq n$, $v_{t_{m-1}}$ is constant over the random intervals $[T_{n-1},T_{n})$. Hence, the explicit solutions for $t\mapsto t+h$, i.e $x_t, z_t, v_t$ are given at time $t$ by

\begin{equation}
\begin{array}{ll}
		\Phi^{[1]}_h(u_t)= & \begin{pmatrix}
 \psi^{[1]}_h\big(\psi^{[2]}_t\big)\\[0.3cm]
z_t\\[0.3cm]
v_t
\end{pmatrix},\\[0.7cm]
		\Phi^{[2]}_h(u_t)= & \begin{pmatrix}
 x_t\\[0.6cm]
\exp\left( -h(k^{+}A(x)+k^{-}) \right) z_t+ \FTS{\exp\left( -h(k^{+}A(x)+k^{-}) \right)-1}{-h(k^{+}A(x)+k^{-})}hf^{'}(A(x))v_tA^{'}(x)\\[0.6cm]
v_t
\end{pmatrix}, \\[1.4cm]
\Phi^{[3]}_h(u(t))= & \begin{pmatrix}
 x_t\\
z_t\\
v_t
\end{pmatrix}.
\end{array}
\end{equation}

\noindent \textit{Step(3):~Composition of the exact solutions.} Then, for any time interval $[T_n, T_{n+1})$ with step size $h$, where ${[T_n=t_0,t_1,\ldots, t_M=T_{n+1})}$, we start with the given initial condition
$u_{T_n}=(x_{T_n},z_{T_n},v_{T_n}):=\Tilde{u}_0=(\Tilde{x}_0,\Tilde{z}_0, \Tilde{v}_0)$
and use the Lie-Trotter composition of the flows to find the numerical solution ${\Tilde{u}_1,\ldots,\Tilde{u}_m,\ldots,\Tilde{u}_M}$, such that
 \begin{equation}
     \Tilde{u}_{m+1}=\left( \Phi^{[3]}_h\circ \Phi^{([2])}_h \circ \Phi^{[1]}_h\right)(\Tilde{u}_m)=\Phi^{[3]}_h\left( \Phi^{[2]}_h\left(\Phi^{[1]}_h(\Tilde{u}_m)\right)\right).
 \end{equation}
 Therefore, we have
\begin{equation*}
\footnotesize{
\begin{aligned}
\Tilde{u}_{m+1} &= \Phi^{[3]}_h\left(\Phi^{[2]}_h \begin{pmatrix} \exp{\left(\Tilde{z}_mW_m\right)}\left(
\exp{(h(-b\Tilde{z}_m+a\Tilde{z}_m))} \Tilde{x}_m 
+ \FTS{\exp{(h(-b\Tilde{z}_m+a\Tilde{z}_m))}-1}{h(-b\Tilde{z}_m+a\Tilde{z}_m)}h\Tilde{v}_m\right) \\[0.5cm]
\Tilde{z}_m \\[0.5cm]
\Tilde{v}_m
\end{pmatrix}\right).
\end{aligned}}
\end{equation*}
Then, 
 \begin{equation*}
 \footnotesize{
\begin{array}{ll}
		\Tilde{u}_{m+1} &=\Phi^{[3]}_h \begin{pmatrix}
\exp{\left(\Tilde{z}_mW_m\right)}\left(
\exp{(h(-b\Tilde{z}_m+a\Tilde{z}_m))} \Tilde{x}_m 
+ \FTS{\exp{(h(-b\Tilde{z}_m+a\Tilde{z}_m))}-1}{h(-b\Tilde{z}_m+a\Tilde{z}_m)}h\Tilde{v}_m\right) \\[0.6cm]
\exp\left( -h(k^{+}A(\Tilde{x}_m)+k^{-}) \right) \Tilde{z}_m+ \FTS{\exp\left( -h(k^{+}A(x)+k^{-}) \right)-1}{-h(k^{+}A(\Tilde{x}_m)+k^{-})}hf^{'}(A(\Tilde{x}_m))\Tilde{v}_mA^{'}(\Tilde{x}_m)\\[0.6cm]
\Tilde{v}_m
\end{pmatrix},
\end{array}}
\end{equation*}
\noindent Finally, the solution to system \eqref{Micro_sys} is given by
 \begin{equation*}
 \footnotesize{
\begin{array}{ll}
		\Tilde{u}_{m+1}= & \begin{pmatrix}
\exp{\left(\Tilde{z}_mW_m\right)}\left(
\exp{(h(-b\Tilde{z}_m+a\Tilde{z}_m))} \Tilde{x}_m 
+ \FTS{\exp{(h(-b\Tilde{z}_m+a\Tilde{z}_m))}-1}{h(-b\Tilde{z}_m+a\Tilde{z}_m)}h\Tilde{v}_m\right) \\[0.6cm]
\exp\left( -h(k^{+}A(\Tilde{x}_m)+k^{-}) \right) \Tilde{z}_m+ \FTS{\exp\left( -h(k^{+}A(x)+k^{-}) \right)-1}{-h(k^{+}A(\Tilde{x}_m)+k^{-})}hf^{'}(A(\Tilde{x}_m))\Tilde{v}_mA^{'}(\Tilde{x}_m)\\[0.6cm]
\Tilde{v}_m
\end{pmatrix},
\end{array}}
\end{equation*}
where, $\Tilde{v}_m$ is a piecewise constant in each interval of length $t_{m+1}-t_m$.\\
\noindent Now that we have the explicit solutions, we integrate them with the thinning method. More precisely, during each simulation step, we use the thinning method to determine whether a jump occurs and its timing. If a jump is accepted, we update the state variables using the explicit solutions derived from the splitting method. \\
The detailed steps of this algorithm are provided in Algorithm \ref{algST} (see \ref{App_C}), which outlines the procedure for implementing the thinned-splitting method.

\begin{figure}[h!]
\centering
\includegraphics[width=0.85\textwidth]{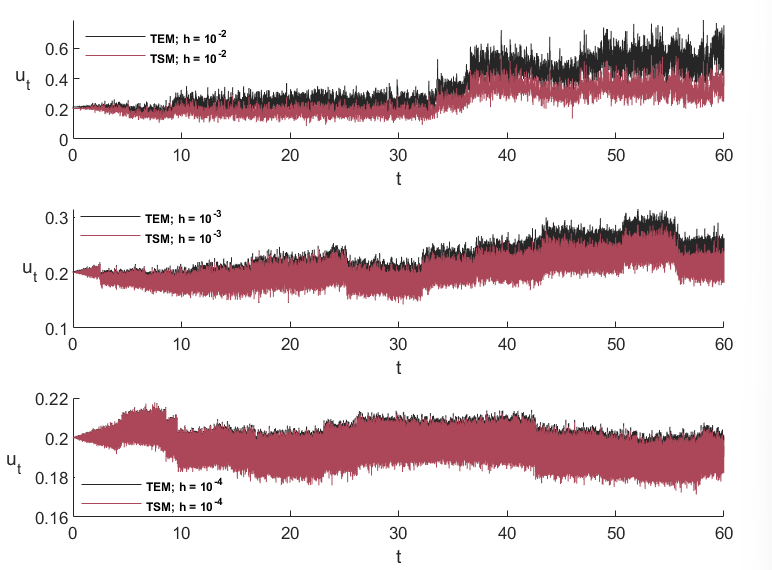}
\caption{Comparison of the glioma cell movement simulated using the TEM and TSM for different time steps $h$.  The parameter values $\lambda_1=0.08 \, (\text{s}^{-1})$, $\lambda_0=0.7 \, (\text{s}^{-1})$ were used for both methods. The values of $a$ and $b$ are taken within the range in Table \ref{parameter_mod1}}
\label{TEM_TSM}
\end{figure} 

\noindent In Figure \ref{TEM_TSM}, we present a comparative analysis of TEM and TSM applied to the simulation of the stochastic process $ u_t $ modelling the movement of glioma cells over different time steps $h$. Both methods use the same initial conditions and parameter values: $ \lambda_0 = 0.7 \, (\text{s}^{-1}) $, $ \lambda_1 = 0.08 \, (\text{s}^{-1}) $, $ a = 0.5 $, and $ b = 0.2 $.\\
The simulations are  performed with time steps $h$ of $ 0.01 $, $ 0.001 $, and $ 0.0001 $ seconds, depicted in the top, middle, and bottom subfigures, respectively. The TEM method is represented in black, while the TSM method is shown in red. The total simulation time is $ T = 60 $s.\\
For $h = 10^{-2}$, both methods exhibit fluctuations in $ u_t $, with the TEM method (black) showing a broader range of values compared to the TSM method (red). As the time step decreases to $ h = 10^{-3} $, the trajectories of both methods become more consistent with each other, displaying fewer differences. For the smallest time step, $ h = 10^{-4} $, the trajectories of the TEM and TSM methods are almost identical, indicating that both methods provide similar results when the time step is sufficiently small. This observation suggests that both methods converge as the time step decreases. However, for larger time steps, the TSM method tends to show slightly less variation, suggesting a potentially more stable behaviour under these conditions. Consequently, while both methods are effective, the TSM method may offer advantages in terms of stability for larger time steps.

\section{Discussion}
\label{section7}
The numerical simulation of PDifMPs poses a significant challenge due to their complex dynamics and the difficulty in obtaining explicit expressions for their characteristics. Although some methods have been proposed for simulating generalised stochastic hybrid systems, these methods often lack a general framework for handling PDifMPs. Additionally, they typically do not investigate the convergence properties of the employed methods, see \cite{blom2018interacting}. However, Betrazzi et al. in \cite{bertazzi2023piecewise} introduced sampling with splitting schemes for PDMPs, thereby making a significant advancement for these types of models.\\
To bridge this gap, we propose two novel numerical approaches for simulating PDifMPs; the Thinned Euler-Maruyama (TEM) method for a general framework, and the Thinned Splitting Method (TSM) for a specific model, as detailed in Section \ref{section6}. Further, we have conducted a detailed analysis of both the mean square and weak convergence of the TEM method.\\
To illustrate the performance of both the TEM and the TSM, we applied these methods to model the motion of a glioma cell at the microscopic level, and we compare the trajectories generated by each simulation. These numerical simulations allowed us to  examine the impact of microscopic parameters, such as net cell motility and environmental signals, on cellular behaviour. Our comparative analysis revealed that the TEM and TSM methods yielded nearly identical results. Notably, for larger time steps, the TSM method exhibits slightly less variation, indicating a potentially more stable behaviour under these specific conditions.\\
Looking towards future research directions, we propose further numerical analysis of both the TEM and the TSM to fully assess their efficacy and scalability. The innovative combination of thinning and splitting methods in simulating PDifMPs, particularly with the unique jump-adapted time discretisation scheme, warrants detailed exploration. Such research would significantly enhance our understanding of convergence properties in the context of PDifMPs, leading to more sophisticated and efficient numerical simulations. A focal point of our forthcoming investigations will be a comprehensive study of the TSM, with particular emphasis on its convergence properties. Additionally,  we intend to conduct  parameter estimation for the various parameters included in our model. This will provide deeper insights into the sensitivity of the model and improve the accuracy of our simulations.

\section*{Declaration of competing interest}

The authors declare that they have no known competing financial interests or personal
relationships that could have appeared to influence the work reported in this paper.

\section*{Funding}
This work was supported by the Austrian Science Fund (FWF): W1214-N15, project DK14, as well as by the strategic program ''Innovatives O\"O 2010 plus'' by the Upper Austrian Government.


\begin{appendices}
\section{}
\label{App_A}
\begin{proof}{Lemma \ref{phi_lem}}\\
Let $\phi(t,(y_1,v))$ and  $\phi(t,(y_1,v))$ satisfy the integral form (\ref{integral_form}). We have,
\begin{multline*}
     \lvert \phi(t,(y_1,v))-\phi(t,(y_2,v)) \rvert^2 = \lvert (y_1-y_2) + \int_{0}^{t}\big(b(\phi(s,(y_1,v)),v)-b(\phi(s,(y_2,v)),v)\big)ds+\\\int_{0}^{t}\big(\sigma(\phi(s,(y_1,v)),v)-\sigma(\phi(s,(y_2,v)),v)\big)dW_{s}  \rvert ^2.
\end{multline*}

\noindent Using the triangle inequality, the H\"older's inequality and the It\^o isometry, we obtain

\begin{align*}
  \lvert \phi(t,(y_1,v))-\phi(t,(y_2,v)) \rvert^2 &\leq \lvert (y_1-y_2)\rvert ^2 +\lvert \int_{0}^{t}\big(b(\phi(s,(y_1,v)),v)-b(\phi(s,(y_2,v)),v)\big)ds\rvert ^2\\
  &\qquad \qquad +\lvert \int_{0}^{t}\big(\sigma(\phi(s,(y_1,v)),v)-\sigma(\phi(s,(y_2,v)),v)\big)dW_{s}  \rvert ^2\\
  &\leq \lvert (y_1-y_2)\rvert ^2 +\int_{0}^{t}\lvert \big(b(\phi(s,(y_1,v)),v)-b(\phi(s,(y_2,v)),v)\big) \rvert ^2ds\\
  &\qquad \qquad +\int_{0}^{t}\lvert \big(\sigma(\phi(s,(y_1,v)),v)-\sigma(\phi(s,(y_2,v)),v)\big)\rvert ^2d{s}. 
\end{align*}
\noindent Using the Lipschitz continuity of the coefficients $b$ and $\sigma$ as given in (\ref{lipschitz_cond}), we get

\begin{align*}
  \lvert \phi(t,(y_1,v))-\phi(t,(y_2,v)) \rvert^2 &\leq \lvert (y_1-y_2)\rvert ^2 +K_1 \int_{0}^{t}\lvert\phi(s,(y_1,v))-\phi(s,(y_2,v))\rvert ^2ds\\
  &\qquad \qquad + K_2 \int_{0}^{t}\lvert\phi(s,(y_1,v))-\phi(s,(y_2,v))\rvert ^2ds \\
  &\leq \lvert (y_1-y_2)\rvert ^2 +2K_1 \int_{0}^{t}\lvert\phi(s,(y_1,v))-\phi(s,(y_2,v))\rvert ^2ds.
  \end{align*}
Since we start with deterministic initial values $y_1$, $y_2$, using Gronwall's inequality, we have
\footnotesize{
\begin{align*}
  \mathbb{E}  [\sup_{t\in [0,T]} \lvert \phi(t,(y_1,v))-\phi(t,(y_2,v)) \rvert^2] &\leq  \mathbb{E}  [\sup_{t\in [0,T]}\lvert y_1-y_2\rvert ^2] +2K_1 \int_{0}^{t} \mathbb{E}  [\sup_{t\in [0,T]}\lvert\phi(s,(y_1,v))-\phi(s,(y_2,v))\rvert ^2]ds.\\
  &\leq  K_3\lvert y_1-y_2\rvert ^2e^{2K_2T},\\
  &\leq \lvert y_1-y_2\rvert ^2 e^{C_1T}.
  \end{align*}}
\end{proof}

\section{}
\label{App_B}
\begin{Lemma}
Let $\mathbf{V}$ be a finite set, with $|\mathbf{V}|$ representing the cardinal number of $\mathbf{V}$ and $\kappa_i$ indicating the elements of $\mathbf{V}$ for $i = 1, 2, \ldots, |\mathbf{V}|$.
 Let $p_i$ and $\overline{p}_i$, $1 \leq i \le |\mathbf{V}|$, be two probability distributions on $\mathbf{V}$. We define the cumulative probabilities for these distributions at each index $j$ as $a_j = \sum_{i = 1}^j p_i$ and $\overline{a}_j = \sum_{i = 1}^j \overline{p}_i$, respectively, with the initial values set as $a_{0}=\overline{a}_{0}:=0$. Let $X$ and $\overline{X}$ be two $\mathbf{V}$-valued random variables defined by
\begin{equation*}
  X:=G(\mathcal{U}), \qquad \overline{X}:=\overline{G}(\mathcal{U}),  
\end{equation*}
 \noindent where
 \begin{equation*}
     \mathcal{U} \sim \mathcal{U}([0,1]), \qquad G(u)=\sum_{j=1}^{|\mathbf{V}|} \kappa_{j} \mathbbm{1}_{a_{j-1}<u \leq a_{j}} \quad \text{and} \quad \overline{G}(u)=\sum_{j=1}^{|\mathbf{V}|} \kappa_{j} \mathbbm{1}_{\overline{a}_{j-1}<u \leq \overline{a}_{j}} \quad \text{for all}\, \, {u \in[0,1]}.
 \end{equation*}
 Then, we have
\begin{equation*}
   \mathbb{P}(X \neq \overline{X}) \leq \sum_{j=1}^{|\mathbf{V}|-1}\left|a_{j}-\overline{a}_{j}\right|. 
\end{equation*}
 \label{useful_lem}
\end{Lemma}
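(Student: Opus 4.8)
The plan is to exploit the fact that $X$ and $\overline{X}$ are built from the \emph{same} uniform random variable $\mathcal{U}$ via the two (deterministic) quantile functions $G$ and $\overline{G}$. Hence $\{X \neq \overline{X}\}$ is exactly the event $\{\mathcal{U} : G(\mathcal{U}) \neq \overline{G}(\mathcal{U})\}$, and since $\mathcal{U}$ is uniform on $[0,1]$, $\mathbb{P}(X \neq \overline{X})$ equals the Lebesgue measure of the set of $u \in [0,1]$ on which $G(u) \neq \overline{G}(u)$. So the whole problem reduces to a measure estimate on $[0,1]$ for the symmetric difference of two partitions into consecutive intervals.

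First I would set, for $j = 1, \dots, |\mathbf{V}|$, the intervals $I_j := (a_{j-1}, a_j]$ and $\overline{I}_j := (\overline{a}_{j-1}, \overline{a}_j]$, so that $\{G = \kappa_j\} = I_j$ and $\{\overline{G} = \kappa_j\} = \overline{I}_j$ (up to the single point $0$, which is Lebesgue-null and can be ignored). Since the $\kappa_j$ are distinct, $G(u) = \overline{G}(u)$ precisely when $u$ lies in $I_j \cap \overline{I}_j$ for the same index $j$; therefore
\begin{equation*}
\{u : G(u) \neq \overline{G}(u)\} = [0,1] \setminus \bigcup_{j=1}^{|\mathbf{V}|} \big(I_j \cap \overline{I}_j\big),
\end{equation*}
and, taking Lebesgue measure (writing $|\cdot|$ also for length of an interval),
\begin{equation*}
\mathbb{P}(X \neq \overline{X}) = 1 - \sum_{j=1}^{|\mathbf{V}|} |I_j \cap \overline{I}_j|.
\end{equation*}
Next I would bound each $|I_j \cap \overline{I}_j|$ from below. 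For two intervals $(a,b]$ and $(c,d]$ one has $|(a,b] \cap (c,d]| \geq (b-a) - |a-c| - |b-d|$ whenever the right-hand side is meaningful; more cleanly, $|I_j \cap \overline{I}_j| \geq |I_j| - |a_{j-1} - \overline{a}_{j-1}| - |a_j - \overline{a}_j|$. Plugging this in and using $\sum_j |I_j| = 1$ together with the boundary values $a_0 = \overline{a}_0 = 0$ and $a_{|\mathbf{V}|} = \overline{a}_{|\mathbf{V}|} = 1$ (both are full cumulative sums of probability vectors), the telescoping of the endpoint terms gives
\begin{equation*}
\mathbb{P}(X \neq \overline{X}) \leq \sum_{j=1}^{|\mathbf{V}|} \big(|a_{j-1} - \overline{a}_{j-1}| + |a_j - \overline{a}_j|\big) - \sum_{j=1}^{|\mathbf{V}|}|I_j|\cdot 0,
\end{equation*}
which, after collecting the two sums and dropping the vanishing $j=0$ and $j=|\mathbf{V}|$ terms, collapses to $2\sum_{j=1}^{|\mathbf{V}|-1}|a_j - \overline{a}_j|$; a slightly more careful accounting (noting each interior index is an endpoint of exactly two intervals but appears with the right combinatorial weight) yields the sharper $\sum_{j=1}^{|\mathbf{V}|-1}|a_j-\overline{a}_j|$ claimed.

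The main obstacle — really the only subtle point — is the bookkeeping in that last step: getting the constant down to $1$ rather than $2$ requires handling the overlaps of consecutive intervals correctly rather than crudely union-bounding. The clean way to do this is to observe directly that the ``bad set'' $[0,1]\setminus\bigcup_j (I_j\cap\overline{I}_j)$ is covered by the points lying strictly between $a_j$ and $\overline{a}_j$ for some interior $j$ (i.e.\ $u$ is on the $\kappa_j$-side of one partition and the $\kappa_{j+1}$-side of the other), so it is contained in $\bigcup_{j=1}^{|\mathbf{V}|-1}\big(\min(a_j,\overline{a}_j), \max(a_j,\overline{a}_j)\big]$, whose measure is $\sum_{j=1}^{|\mathbf{V}|-1}|a_j-\overline{a}_j|$ by subadditivity. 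That argument is short and avoids the telescoping altogether, so I would present it that way; the rest is immediate.
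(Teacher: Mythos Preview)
Your argument is correct; the covering observation at the end---that the bad set $\{u:G(u)\neq\overline{G}(u)\}$ is contained in $\bigcup_{j=1}^{|\mathbf{V}|-1}\big(\min(a_j,\overline{a}_j),\max(a_j,\overline{a}_j)\big]$---is exactly the right way to get the constant $1$, and your justification (if $G(u)=\kappa_j$, $\overline{G}(u)=\kappa_k$ with, say, $j<k$, then $u\le a_j$ and $u>\overline{a}_{k-1}\ge\overline{a}_j$, so $u$ lies between $a_j$ and $\overline{a}_j$) is sound. The intermediate telescoping attempt that yields the factor $2$ is superfluous and a little muddled in the write-up; I would drop it entirely and present only the covering argument, which is two lines.

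As for comparison with the paper: the paper does not prove this lemma at all but simply refers to Lemaire et al.\ (2020). Your self-contained proof is therefore an addition rather than an alternative, and the covering argument you give is in fact the standard one used for this kind of inverse-transform coupling bound.
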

\begin{proof}
    We refer to \cite{lemaire2020thinning} for the proof of Lemma \ref{useful_lem}.
\end{proof}

\section{}
\label{App_C}
In this appendix, we provide all the pseudo-code used in this manuscript. This includes the detailed algorithms used in our simulations and analyses.
\begin{algorithm}
\caption{PDifMP simulation using the thinning method and exact solution}
\begin{algorithmic}[1]
\State Set $T_0 = 0$, initialize $(y_0, v_0) \in E$, define a counter $n = 0$, $t = 0$, $\tau_0 = 0$.
\State Initialise homogeneous Poisson process $(N^{*}_t)_{t \in [0,T]}$ with intensity $\lambda^{*}$.
\State Define i.i.d sequences $(\mathcal{U}_k)_{k \geq 1}$ and $(\mathcal{V}_k)_{k \geq 1}$, uniformly distributed on $[0,1]$.

\While{$t < T$}
    \State Generate potential jump times $(T^{*}_k)$ from $(N^{*}_t)$.
    \State Determine the next valid jump time: $$\tau_{n+1} := \inf \{ k > \tau_n : \mathcal{U}_k \lambda^{*} \leq \lambda(\phi(T^{*}_k - T^{*}_{\tau_n}, x_n), v_n)\}.$$
    \State Set $T_{n+1} := T^{*}_{\tau_{n+1}}$ and update $t := T_{n+1}$.
    
    \State \textbf{Update the state at $T_{n+1}$}
    \[
    (y_{n+1}, v_{n+1}) = \left(\phi(T_{n+1} - T_n, x_n), \psi(\mathcal{V}_{\tau_{n+1}}, (\phi(T_{n+1} - T_n, x_n), v_n))\right)
    \]
    \State Update $x_{n+1} := (y_{n+1}, v_{n+1})$.

    \State \textbf{Continue simulating until the next jump}
    \While{$t < T_{n+1}$}
        \State Update $t$ and compute $\phi(t, x_{n+1})$.
        \State Update $v$ as required.
    \EndWhile
    
    \State $n=n+1$.
\EndWhile
\end{algorithmic}
\label{Alg_simulation_PDifMP}
\end{algorithm}

\begin{algorithm}
\caption{Pseudo code for Example 1: Simulation of the flow maps with exact and TEM Methods}
\begin{algorithmic}[1]
\State \textbf{Initialise:}
Set a counter $n=0$. Set the initial time $t_0 = 0$, the initial waiting time $\tau_0=0$ and initial values $x_0=(y_0,v_0)$ where $v_0=0$. Fix the final time $T=1$ and the time step $h$. Set $\mu$, $\sigma$, and $\lambda$.
\State Set $y_{\text{exact}} = y_0$, $y_{\text{TEM}} = y_0$, $v_t = v_0$.
\While{$t < T$}
    \State Generate the next jump time $T_{n+1}$ using exponential inter-arrival times:
    \State Draw $\tau_{n}$ from $\exp(\lambda)$ and set $T_{n+1} = T_n + \tau_{n}$
    \If{$T_{n+1} > T$}
        \State Set $T_{n+1} = T$ \Comment{exit the loop after final updates}
    \EndIf
    \State Simulate paths from $t$ to $T_{n+1}$:
    \State Initialise $t_{\text{current}} = t$
    \While{$t_{\text{current}} < T_{\text{jump}}$}
        \State Calculate $h = \min(h, (T_{n+1} - t_{\text{current}}))$ \Comment{Adjust step size}
        \State Generate $dW = \sqrt{h} \cdot \mathcal{N}(0,1)$ 
        \State Update $y_{\text{exact}}$ using the exact formula at $t_{\text{current}} + h$:
        \[
        y_{\text{exact}} = y_{{\text{exact}}_{T_n}} \cdot \exp\left(\left(\mu - 0.5 \sigma^2\right) (t_{\text{current}} + h) + \sigma \sqrt{t_{\text{current}} + h} \cdot \mathcal{N}(0,1)\right)
        \]
        \State Update $y_{\text{TEM}}$ using the Euler-Maruyama approximation:
        \[
        y_{\text{TEM}} = y_{{\text{TEM}}_{T_n}} + \mu y_{\text{TEM}} h + \sigma y_{\text{TEM}} \sqrt{h} \cdot \mathcal{N}(0,1)
        \]
        \State $t_{\text{current}} \gets t_{\text{current}} + h$
    \EndWhile
    \State Apply the jump at $T_{n+1}$:
    \State Draw $\eta$ from $\text{Exp}(\lambda)$
    \State Update both $y_{\text{exact}}$ and $y_{\text{TEM}}$ by the same factor:
    \[
    y_{\text{exact}} = y_{\text{exact}} \cdot e^\eta
    \]
    \[
    y_{\text{TEM}} = y_{\text{TEM}} \cdot e^\eta
    \]
    \State Increment the jump counter $v_t$
    \State $t \gets T_{n+1}$
    \State $n \gets n+1$
    \State Store the state ($t$, $y_{\text{exact}}$, $y_{\text{TEM}}$, $v_t$)
\EndWhile
\State Output the simulation results: trajectories of $y_{\text{exact}}$ and $y_{\text{TEM}}$, and the jump times.
\end{algorithmic}
\label{Alg_1}
\end{algorithm}

\begin{algorithm}[H]
\caption{Pseudo code for Example 2: Simulation of Geometric Brownian Motion with Adaptive Jump Rate Function}
\begin{algorithmic}[1]
\State \textbf{Initialise:}
Set a counter $n=0$. Set the initial time $t_0 = 0$, the initial waiting time $\tau_0=0$ and initial values $x_0=(y_0,v_0)$ where $v_0=0$. Fix the final time $T=1$ and the time step $h$. Set $\mu$, $\sigma$.
\State Set $y_{\text{exact}} = y_0$, $y_{\text{TEM}} = y_0$, $v_t = v_0$.
\State \textbf{Modify jump rate function:} Define $\lambda(y_t) = 0.01 \times y_t$ \Comment{Adaptive rate based on current $y_t$}
\While{$t < T$}
    \State Generate the next jump time $T_{n+1}$ considering the adaptive rate:
    \State Calculate adaptive rate $\lambda_{current} = \lambda(y_{\text{exact}})$ at $t$
    \State Draw $\tau_{n}$ from $\exp(\lambda_{current})$ and set $T_{n+1} = T_n + \tau_{n}$
    \If{$T_{n+1} > T$}
        \State Set $T_{n+1} = T$ \Comment{Exit the loop after final updates}
    \EndIf
    \State Simulate paths from $t$ to $T_{n+1}$:
    \State Initialise $t_{\text{current}} = t$
    \While{$t_{\text{current}} < T_{n+1}$}
        \State Calculate $h = \min(h, (T_{n+1} - t_{\text{current}}))$ \Comment{Adjust step size}
        \State Generate $dW = \sqrt{h} \cdot \mathcal{N}(0,1)$ 
        \State Update $y_{\text{exact}}$ using the exact formula at $t_{\text{current}} + h$:
        \[
        y_{\text{exact}} = y_{\text{exact}} \cdot \exp\left(\left(\mu - 0.5 \sigma^2\right) (t_{\text{current}} + h) + \sigma \sqrt{t_{\text{current}} + h} \cdot \mathcal{N}(0,1)\right)
        \]
        \State Update $y_{\text{TEM}}$ using the Euler-Maruyama approximation:
        \[
        y_{\text{TEM}} = y_{\text{TEM}} + \mu y_{\text{TEM}} h + \sigma y_{\text{TEM}} \sqrt{h} \cdot \mathcal{N}(0,1)
        \]
        \State $t_{\text{current}} \gets t_{\text{current}} + h$
    \EndWhile
    \State Check if jump occurs at $T_{n+1}$ using adaptive rate:
    \If{rand() $\leq \frac{\lambda(y_{\text{exact}}, T_{n+1})}{\lambda_{max}}$}
        \State Apply the jump rule: $y_{\text{exact}} = 0.9 \times y_{\text{exact}}$ 
        \State $y_{\text{TEM}} = 0.9 \times y_{\text{TEM}}$
        \State Increment the jump counter $v_t$
    \EndIf
    \State $t \gets T_{n+1}$
    \State $n \gets n+1$
    \State Store the state ($t$, $y_{\text{exact}}$, $y_{\text{TEM}}$, $v_t$)
\EndWhile
\State Output the simulation results: trajectories of $y_{\text{exact}}$ and $y_{\text{TEM}}$, and the jump times.
\end{algorithmic}
\label{Alg_2}
\end{algorithm}

\begin{algorithm}
\caption{Thinned-Splitting Method for PDifMPs}
\label{algST}
\begin{algorithmic}[1]
\State \textbf{Input:} Initial conditions $(x_0, z_0, v_0)$, time parameters $(t_{\text{First}}, t_{\text{Last}}, h)$, model parameters $(\lambda_0, \lambda_1, \lambda^{*} k^+, k^-, a, b)$
\State \textbf{Initialize:} $t \gets t_{\text{First}}$, $(x, z, v) \gets (x_0, z_0, v_0)$, $T_{\text{jumps}} \gets [t_{\text{First}}]$
\While{$t \leq t_{\text{Last}}$}
    \State $\lambda_b \gets \lambda_0 - \lambda_1 \cdot \min(z)$
    \State Sample $\tau \sim \text{Exp}(\lambda^{*})$
    \State Append $T_{\text{jumps}}$ with $T_{\text{jumps}}[\text{end}] + \tau$
    \State $t_{\text{steps}} \gets T_{\text{jumps}}[\text{end}-1] : h : T_{\text{jumps}}[\text{end}]$
    \State Sample Wiener increments $dW \sim \sqrt{h} \cdot \text{randn}(\text{size}(t_{\text{steps}}))$
    \State $\mathcal{U} \sim \text{Uniform}(0,1)$
    \If{$\mathcal{U} \leq \frac{\lambda_0 - \lambda_1 \cdot z[\text{end}]}{\lambda^{*}}$}
        \State Update $v$
        \For{$j = 1$ to $\text{length}(t_{\text{steps}})$}
            \State $x_{\text{new}} \gets \exp(z[\text{end}] \cdot dW[j]) \cdot (\exp(h \cdot (a - b) \cdot z[\text{end}] \cdot x[\text{end}]) + \frac{h \cdot v \cdot (\exp((a - b) \cdot z[\text{end}]) - 1)}{(a - b) \cdot z[\text{end}]})$
            \State $A \gets \frac{1}{1 + \exp(-x_{\text{new}})}$
            \State $z_{\text{new}} \gets z[\text{end}] \cdot \exp(-h \cdot (k^+ \cdot A + k^-)) + \frac{(1 - \exp(-h \cdot (k^+ \cdot A + k^-))) \cdot (k^+ \cdot k^-)}{(k^+ \cdot A + k^-)^3} \cdot v \cdot \frac{\exp(-x_{\text{new}})}{(1 + \exp(-x_{\text{new}}))^2}$
            \State Append $x$ with $x_{\text{new}}$
            \State Append $z$ with $z_{\text{new}}$
        \EndFor
        \State $t \gets T_{\text{jumps}}[\text{end}]$
    \Else
        \State \textbf{Reject jump:}
        \For{$j = 1$ to $\text{length}(t_{\text{steps}})$}
            \State Update $x$ and $z$ as in step 13
        \EndFor
    \EndIf
\EndWhile
\end{algorithmic}
\end{algorithm}
\end{appendices}
\newpage
\printbibliography
\end{document}